\theoremstyle{plain}
\newtheorem{theorem}{Theorem}[section]
\newtheorem{lemma}[theorem]{Lemma}
\newtheorem{proposition}[theorem]{Proposition}
\newtheorem{corollary}[theorem]{Corollary}
\newtheorem{definition}[theorem]{Definition} \theoremstyle{definition}
\newtheorem{example}[theorem]{Example}
\newtheorem{remark}[theorem]{Remark}
\newcommand{\duer}{^*_}
\newcommand{\E}{{\mathbb{E}}}
\newcommand{\R}{\mathbb{R}} 
\newcommand{\inv}{^{-1}}
\newcommand{\N}{\mathbb{N}} 
\newcommand{\mx}{\mathfrak{X}} 
\newcommand{\dr}{\mathbf{d}}
\newcommand{\ldr}[1]{{{\pounds}}_{#1}}
\newcommand{\ip}[1]{{\mathbf{i}}_{#1}}
\newcommand{\an}[1]{\arrowvert_{#1}} 
\newcommand{\lb}{\llbracket} 
\newcommand{\rb}{\rrbracket}
\newcommand{\Beta}{\boldsymbol{\beta}}
\DeclareMathOperator{\dom}{Dom}
\DeclareMathOperator{\pr}{pr}
\DeclareMathOperator{\Hom}{Hom}
\DeclareMathOperator{\Id}{Id}
\DeclareMathOperator{\Jac}{Jac}
\newcommand{\nsp}[2]  {%
\langle\mspace{-6.8mu}%
\langle\mspace{-6.8mu}%
\langle\mspace{-6.8mu}%
\langle\mspace{-6.8mu}%
\langle\mspace{-6.8mu}%
\langle\mspace{-6.8mu}%
\langle{#1,\,}{#2}%
\rangle%
\mspace{-6.8mu}\rangle%
\mspace{-6.8mu}\rangle%
\mspace{-6.8mu}\rangle%
\mspace{-6.8mu}\rangle%
\mspace{-6.8mu}\rangle%
\mspace{-6.8mu}\rangle}
\begin{document}
%%%%%%%%%%%%%%%%%%%%%%%%%%%%%%%%%%%%%%%%%%%%%%%%%%%%%%%%%%%%%%%%%%%%%%%%%%%
%%%%%%%%%%%%%%%%%%%%%%    Title    %%%%%%%%%%%%%%%%%%%%%%%%%%%%%%%%%%%%%%%%
\title[Lie 2-algebroids and matched pairs of $2$-representations]{Lie 2-algebroids and matched pairs of $2$-representations -- a geometric approach}

%%% author one information

\author{M. Jotz Lean} \address{Mathematisches Institut, Georg-August Universit\"at G\"ottingen}  \email{madeleine.jotz-lean@mathematik.uni-goettingen.de}
\subjclass[2010]{Primary: 53B05, %linear and affine connections
  Secondary:
  53D17. %Poisson manifolds; Poisson groupoids and algebroids
}

\begin{abstract}
  Li-Bland's correspondence between linear Courant algebroids and Lie
  $2$-algebroids is explained and shown to be an equivalence of
  categories.  Decomposed VB-Courant algebroids are shown to be
  equivalent to split Lie 2-algebroids in the same manner as
  decomposed VB-algebroids are equivalent to 2-term representations up
  to homotopy (Gracia-Saz and Mehta). Several classes of examples are
  discussed, yielding new examples of split Lie 2-algebroids.

  We prove that the bicrossproduct of a matched pair of
  $2$-representations is a split Lie $2$-algebroid and we explain this
  result geometrically, as a consequence of the equivalence of
  VB-Courant algebroids and Lie $2$-algebroids. This explains in
  particular how the two notions of ``double'' of a matched pair of
  representations are geometrically related. In the same manner, we
  explain the geometric link between the two notions of double of a
  Lie bialgebroid.
\end{abstract}
\maketitle

\tableofcontents

\section{Introduction}
A matched pair of Lie algebroids is a pair of Lie algebroids $A$ and
$B$ over a smooth manifold $M$, together with a representation of $A$
on $B$ and a representation of $B$ on $A$, satisfying some
compatibility conditions, which can be
interpreted in two manners: first the direct sum $A\oplus B$ carries a
Lie algebroid structure over $M$, such that $A$ and $B$ are Lie
subalgebroids and such that the representations give ``mixed''
brackets
\[ [(a,0),(0,b)]=(-\nabla_ba,\nabla_ab)
\]
for all $a\in\Gamma(A)$ and $b\in\Gamma(B)$. The direct sum $A\oplus
B$ with this Lie algebroid structure is called here the
\emph{bicrossproduct of the matched pair}. Note that conversely, any
Lie algebroid with two transverse and complementary subalgebroids
defines a matched pair of Lie algebroids \cite{Mokri97}.

Alternatively, the fiber product $A\times_M B$, which has a double
vector bundle structure with sides $A$ and $B$ and with trivial core,
is as follows a double Lie algebroid: for $a\in\Gamma(A)$, we write
$a^l\colon B\to A\times_MB$, $b_m\mapsto(a(m),b_m)$, for the linear
section of $A\times_M B\to B$, and similarly, a section
$b\in\Gamma(B)$ defines a linear section $b^l\in\Gamma_A(A\times_MB)$.
The Lie algebroid structure on $A\times_MB\to B$ is defined by
\[ \left[a_1^l,a_2^l\right]=[a_1,a_2]^l
\quad \text{ and } \quad \rho(a^l)=\widehat{\nabla_a}\in\mx^l(B)
\]
for $a, a_1, a_2\in\Gamma(A)$, where we denote by
$\widehat{D}\in\mx(B)$ the linear vector field defined by a derivation
$D$ on $B$. The Lie algebroid structure on $A\times_M B\to A$ is
defined accordingly by the Lie bracket on sections of $B$ and the
$B$-connection on $A$. The double Lie algebroid $A\times_M B$ is then
called the \emph{double of the matched pair}. Note that conversely,
any double Lie algebroid with trivial core is the fiber product of two
vector bundles and defines a matched pair of Lie algebroids
\cite{Mackenzie11}.

These two constructions encoding the compatibility conditions for a
matched pair of representations seem at first sight only related by
the fact that they both encode matched pairs. A similar phenomenon can
be observed with the notion of Lie bialgebroid: A Lie bialgebroid is a
pair of Lie algebroids $A, A^*\to M$ in duality, satisfying some compatibility conditions,
which can be described in two manners. First, the direct sum $A\oplus
A^*\to M$ inherits a Courant algebroid structure with the two Lie
algebroids $A$ and $A^*$ as transverse Dirac
structures, and mixed brackets given by
\[ \lb (a,0), (0,\alpha)\rb=(-\ip{\alpha}\dr_{A^*}a, \ldr{a}\alpha)
\]
for all $a\in\Gamma(A)$ and $\alpha\in\Gamma(A^*)$. Alternatively, the
cotangent bundle $T^*A$, a double vector bundle with sides $A$ and
$A^*$ and core $T^*M$, which is isomorphic as a double vector bundle
to $T^*A^*$ carries two linear Lie algebroid structures. The first, on
$T^*A\to A$ is the cotangent Lie algebroid induced by the
linear Poisson structure defined on $A$ by the Lie algebroid structure
on $A^*$. The second, on $T^*A\simeq T^*A^*\to A^*$ is defined in the
same manner by the Lie algebroid structure on $A$. The compatibility
conditions for $A$ and $A^*$ to build a Lie bialgebroid are equivalent
to the double Lie algebroid condition for $(T^*A, A, A^*, M)$ 
\cite{Mackenzie11,GrJoMaMe17}. Again, the cotangent double of the Lie
algebroid and the bicrossproduct Courant algebroid seem only related
by the fact that they are two elegant ways of encoding the Lie
bialgebroid conditions. 

One feature of this paper is the explanation of the deeper, more
intrinsic relation between the bicrossproduct of a matched pair of Lie
algebroids and its double on the one hand, and between the
bicrossproduct of a Lie bialgebroid and its cotangent double on the
other hand.  In both cases, the bicrossproduct can be understood as a
purely algebraic construction, which is \emph{geometrised} by the
corresponding double Lie algebroid.  More generally, we explain how
the matched pair of two $2$-term representations up to homotopy
\cite{GrJoMaMe17} defines a bicrossproduct split Lie $2$-algebroid,
and we relate the latter to the decomposed double Lie algebroid found
in \cite{GrJoMaMe17} to be equivalent to the matched pair of $2$-representations.

These three classes of examples of bicrossproduct constructions vs
double Lie algebroid constructions are in fact three special cases of
the equivalence between the category of VB-Courant algebroid, and the
category of Lie $2$-algebroids.

Let us be a little more precise. Supermanifolds were introduced in the
1970's by physicists, as a formalism to describe supersymmetric field
theories, and have been extensively studied since then (see
e.g.~\cite{Sardanashvily09,Varadarajan04} and references therein).  A
supermanifold is a smooth manifold the algebra of functions of which
is enriched by anti-commuting coordinates. Supermanifolds with an
additional $\mathbb{Z}$-grading have been used since the late 1990's
among others
in relation with Poisson geometry and Lie and Courant algebroids
\cite{Severa05,Roytenberg02,Voronov02}.

An equivalence between Courant algebroids and $\N$-manifolds of degree
$2$ endowed with a symplectic structure and a compatible homological
vector field \cite{Roytenberg02} is at the heart of the current
interest in $\N$-graded manifolds in Poisson geometry, as this algebraic description of
Courant algebroids leads to possible paths to their integration
\cite{Severa05,LiSe12,MeTa11}. In \cite{Jotz17a} we showed how the
category of $\N$-manifolds of degree $2$ is equivalent to a category
of double vector bundles endowed with a linear involution. 
The latter involutive double vector bundles are dual to double vector
bundles endowed with a linear metric.
In this paper
we extend this correspondence to an equivalence between the category
of $\N$-manifolds of degree $2$ endowed with a homological vector
field and a category of VB-Courant algebroids, i.e.~metric double
vector bundles endowed with a linear Courant algebroid structure (see
also \cite{Li-Bland12}).

\subsubsection*{Original motivation}
Let us explain in more detail our methodology and our original
motivation.  A VB-Lie algebroid is a double vector bundle $(D;A,B;M)$
with one side $D\to B$ endowed with a Lie algebroid bracket and an
anchor that are \emph{linear} over a Lie algebroid structure on $A\to M$.  Gracia-Saz and Mehta prove in
\cite{GrMe10a} that linear decompositions of VB-algebroids are
equivalent to super-representations, or in other words, to
$2$-representations.

The definition of a VB-Courant algebroid is very similar to the one of a
VB-algebroid.  The Courant bracket, the anchor and the non-degenerate
pairing all have to be linear. In \cite{Jotz13a} we prove that the
standard Courant algebroid over a vector bundle can be decomposed into
a connection, a Dorfman connection, a curvature term and a vector
bundle map, in a manner that resembles very much the one in
\cite{GrMe10a}. In other words, as linear splittings of the tangent
space $TE$ of a vector bundle $E$ are equivalent to linear connections
on the vector bundle, linear splittings of the Pontryagin bundle
$TE\oplus T^*E$ over $E$ are equivalent to a certain class of Dorfman
connections \cite{Jotz13a}.  Further, as the Lie algebroid structure
on $TE\to E$ can be described in a splitting in terms of the
corresponding connection, the Courant algebroid structure on
$TE\oplus T^*E\to E$ is completely encoded in a splitting by the
corresponding Dorfman connection \cite{Jotz13a}.

Our original goal in this project was to show that this is in fact a
very special case of a general result on linear splittings of
VB-Courant algebroids, in the spirit of Gracia-Saz and Mehta's work
\cite{GrMe10a}. Along the way, we proved the equivalence of
$[2]$-manifolds with metric double vector bundles \cite{Jotz17a}.
This paper builds up on those results to explain in a very precise
manner the `bijection' found by Li-Bland \cite{Li-Bland12} between Lie
$2$-algebroids and VB-Courant algebroids; it is now more correctly
formulated as an equivalence of categories.

Note that while the methods used in \cite{Jotz17a,Li-Bland12} do not
use splittings of the $[2]$-manifolds and metric double vector
bundles, it appears more natural to us to work here with split
objects. First, the equivalence of the underlying $[2]$-manifolds with
metric double vector bundles was already established and it is now
much more convenient to work in splittings vs Lagrangian double
vector bundle charts -- the 'coordinate free' definition of the
homological vector field that corresponds to a linear Courant
algebroid structure is easily done (see Section
\ref{dorfman_eq_split}), but we did not find a good coordinate free
definition of it.  Second, working with splittings is necessary in
order to exhibit the similarity with Gracia-Saz and Mehta's techniques
in \cite{GrMe10a}; which is one of our main goals. Finally, as
explained below, the
construction of the bicrossproduct of a matched pair of
$2$-representations is an algebraic description of the construction of
a \emph{decomposed} VB-Courant algebroid from a \emph{decomposed}
double Lie algebroid, just as $2$-representations are equivalent to
\emph{decomposed} VB-Lie algebroids.

\subsubsection*{Application}
The equivalence of matched pairs of $2$-representations with a certain
class of split Lie $2$-algebroids appears as a natural class of
examples of our equivalence of VB-Courant algebroids with Lie
$2$-algebroids. A double vector bundle $(D;A,B;M)$ with core $C$ and
two linear Lie algebroid structures on $D\to A$ and $D\to B$ is a
double Lie algebroid if and only if the pair of duals
$(D\duer A; D\duer B)$ is a VB-Lie bialgebroid over
$C^*$. Equivalently, $D\duer A\oplus_{C^*} D\duer B$ is a VB-Courant
algebroid over $C^*$, with side $A\oplus B$ and core $B^*\oplus A^*$,
and with two transverse Dirac structures $D\duer A$ and $D\duer B$.  A
decomposition of $D$ defines on the one hand a matched pair of
$2$-representations \cite{GrJoMaMe17}, and on the other hand a
decomposition of $D\duer A\oplus_{C^*} D\duer B$, hence a split Lie
$2$-algebroid. Once this geometric correspondence has been found, it
is straightforward to construct algebraically the split Lie
$2$-algebroid from the matched pair, and vice-versa.

\subsection*{Outline, main results and applications}
This paper is organised as follows.

\paragraph{In \textbf{Section \ref{preliminaries}}} we describe the
main result in \cite{Jotz17a} -- the equivalence of $[2]$-manifolds
with metric double vector bundles -- and we recall the background on
VB-algebroids and representations up to homotopy that will be
necessary for our main application on the bicrossproduct of a matched
pair of $2$-representations.

\paragraph{In \textbf{Section \ref{sec:split_lie_2}}}
we start by recalling necessary background on Courant algebroids,
Dirac structures and Dorfman connections.  Then we formulate in our
own manner Sheng and Zhu's definition of split Lie $2$-algebroids
\cite{ShZh17}. We write in coordinates the homological
vector field corresponding to a split Lie $2$-algebroid, showing where
the components of the split Lie $2$-algebroid appear. In Section
\ref{examples_split_lie_2}, we give several classes of examples of
split Lie $2$-algebroids, introducing in particular the standard split
Lie $2$-algebroids defined by a vector bundle.  Finally we describe morphisms of split Lie $2$-algebroids.

\paragraph{In \textbf{Section \ref{sec:VB_cour}}}
we give the definition of VB-Courant algebroids \cite{Li-Bland12} and
we relate split Lie $2$-algebroids with Lagrangian splittings of
VB-Courant algebroids, in the spirit of Gracia-Saz and Mehta's
description of split VB-algebroids via $2$-term representations up to
homotopy \cite{GrMe10a}. Then we describe the VB-Courant algebroids
corresponding to the examples of split Lie $2$-algebroids found in the
preceding section, and we prove that the equivalence of categories
established in \cite{Jotz17a} induces an equivalence of the category
of VB-Courant algebroids with the category of Lie $2$-algebroids.

\paragraph{In \textbf{Section \ref{double}}} we construct the
bicrossproduct of a matched pair of $2$-representations and prove that
it is a split Lie $2$-algebroid. We then explain geometrically this
result by studying VB-bialgebroids and double Lie algebroids.

\paragraph{\textbf{Appendices}}
We give in Section
\ref{appendix_proof_of_main} the proof of our main theorem, describing
decomposed VB-Courant algebroids via split Lie $2$-algebroids. In
Section \ref{appendix_dual} we quickly recall how double vector
bundles and their splittings are dualised.

\subsection*{Acknowledgement}
The author warmly thanks Chenchang Zhu for giving her a necessary
insight at the origin of her interest in Lie $2$-algebroids, and Alan
Weinstein, David Li-Bland, Rajan Mehta, Dmitry Roytenberg and Arkady
Vaintrob for interesting conversations or comments. Thanks go also to
Yunhe Sheng for his help on a technical detail 
and in particular to Rohan Jotz Lean for many useful comments.  This
work was partially supported by a \emph{Fellowship for prospective
  researchers (PBELP2\_137534) of the Swiss NSF} for a postdoctoral
stay at UC Berkeley.

\subsection*{Prerequisites, notation and conventions}
We write $p_M\colon TM\to M$, $q_E\colon E\to M$ for vector bundle
maps. For a vector bundle $Q\to M$ we often identify without further
mentioning the vector bundle $(Q^*)^*$ with $Q$ via the canonical
isomorphism. We write $\langle\cdot\,,\cdot\rangle$ for the canonical
pairing of a vector bundle with its dual; i.e.~$\langle
a_m,\alpha_m\rangle=\alpha_m(a_m)$ for $a_m\in A$ and $\alpha_m\in
A^*$. We use several different pairings; in general, which pairing is
used is clear from its arguments.  Given a section $\varepsilon$ of
$E^*$, we always write $\ell_\varepsilon\colon E\to \R$ for the linear
function associated to it, i.e.~the function defined by $e_m\mapsto
\langle \varepsilon(m), e_m\rangle$ for all $e_m\in E$.

Let $M$ be a smooth manifold. We denote by $\mx(M)$ and $\Omega^1(M)$
the sheaves of local smooth sections of the tangent and the cotangent
bundle, respectively. For an arbitrary vector bundle $E\to M$, the
sheaf of local sections of $E$ will be written $\Gamma(E)$.  Let
$f\colon M\to N$ be a smooth map between two smooth manifolds $M$ and
$N$.  Then two vector fields $X\in\mx(M)$ and $Y\in\mx(N)$ are said to
be \textbf{$f$-related} if $Tf\circ X=Y\circ f$ on $\dom(X)\cap
f\inv(\dom(Y))$.  We write then $X\sim_f Y$. In the same manner, if
$\phi\colon A\to B$ is a vector bundle morphism over $\phi_0\colon
M\to N$, then a section $a\in\Gamma_M(A)$ is $\phi$-related to
$b\in\Gamma_N(B)$ if $\phi(a(m))=b(\phi_0(m))$ for all $m\in M$. We
write then $a\sim_\phi b$. The dual of the morphism $\phi$ is in general not a morphism
of vector bundles, but a relation $R_{\phi^*}\subseteq A^*\times
B^*$ defined as follows:
\[
R_{\phi^*}=\{(\phi_m^*\beta_{\phi_0(m)},\beta_{\phi_0(m)})\mid
m\in M, \beta_{\phi_0(m)}\in B^*_{\phi_0(m)}\},
\]
where $\phi_m\colon A_m\to B_{\phi_0(m)}$ is the morphism of
vector spaces.
 
\medskip
We will say $2$-representations for $2$-term representations up to
homotopy.  We write ``$[n]$-manifold'' for ``$\N$-manifolds of degree
$n$''. 
Let $E_1$ and $E_{2}$ be smooth vector bundles of finite
ranks $r_1,r_2$ over $M$. 
The $[2]$-manifold
$E_{1}[-1]\oplus E_{2}[-2]$ has local basis sections of
${E_{i}}^*$ as local generators of degree $i$, for $i=1,2$, and
so dimension $(p;r_1,r_2)$. 
A $[2]$-manifold $\mathcal M=E_{1}[-1]\oplus E_{2}[-2]$ defined in
this manner by a graded vector bundle is called a \textbf{split
  $[2]$-manifold}. In other words, we have $C^\infty(\mathcal
M)^0=C^\infty(M)$, $C^\infty(\mathcal M)^1=\Gamma(E_{1}^*)$ and
$C^\infty(\mathcal M)^2=\Gamma(E_{2}^*\oplus \wedge^2E_{1}^*)$. Let
$\mathcal N:=F_1 [-1]\oplus F_{2}[-2]$  be a second $[2]$-manifold over a base $N$. A
morphism $\mu\colon F_{1}[-1]\oplus F_{2}[-2]\to E_{1}[-1]\oplus
E_{2}[-2]$ of split $[2]$-manifolds over the bases $N$ and $M$,
respectively, consists of a smooth map $\mu_0\colon N\to M$, three
vector bundle morphisms $\mu_1\colon F_{1}\to E_{1}$, $\mu_2\colon
F_{2}\to E_{2}$ and $\mu_{12}\colon \wedge^2F_{1}\to E_{2}$
over $\mu_0$. The morphism $\mu^\star\colon C^\infty(\mathcal M)\to
C^\infty(\mathcal N)$ sends a degree $1$ function
$\xi\in\Gamma(E_{1}^*)$ to
${\mu_1}^\star\xi\in\Gamma(F_{1}^*)$
and a degree $2$-function $\xi\in\Gamma(E_{2}^*)$ to
${\mu_2}^\star\xi+\mu_{12}^\star\xi\in\Gamma({F_{2}}^*\oplus
\wedge^2{F_{1}}^*)$.

\section{Preliminaries}\label{preliminaries}
In this section we recall the necessary background on VB-algebroids,
double Lie algebroids and VB-bialgebroids, as well as the
correspondence found in \cite{Jotz17a} between double vector bundles
endowed with a linear metric, and $\N$-manifolds of degree $2$.  The
definition of a double vector bundle can be found in Appendix
\ref{appendix_dual}, together with a summary of their properties and
an overview of our notation conventions.

\subsection{VB-algebroids, double Lie algebroids, VB-bialgebroids}
\label{subsect:VBa}
Let $(D; A, B; M)$ be a double vector bundle
with core $C$.
Then $(D \to B; A \to M)$ is a \textbf{VB-algebroid}
(\cite{Mackenzie98x}; see also \cite{GrMe10a}) if $D \to B$ has a Lie
algebroid structure the anchor of which is a bundle morphism
$\Theta_B\colon D \to TB$ over $\rho_A\colon A \to TM$ and such that
the Lie bracket is linear:
\begin{equation*} [\Gamma^\ell_B(D), \Gamma^\ell_B(D)] \subset
  \Gamma^\ell_B(D), \qquad [\Gamma^\ell_B(D), \Gamma^c_B(D)] \subset
  \Gamma^c_B(D), \qquad [\Gamma^c_B(D), \Gamma^c_B(D)]= 0.
\end{equation*}
The vector bundle $A\to M$ is then also a Lie algebroid, with anchor
$\rho_A$ and bracket defined as follows: if $\xi_1,
\xi_2\in\Gamma^\ell_B(D)$ are linear over $a_1,a_2\in\Gamma(A)$, then
the bracket $[\xi_1,\xi_2]$ is linear over $[a_1,a_2]$.  We also say
that the Lie algebroid structure on $D\to B$ is linear over the Lie
algebroid $A\to M$. Note that since the anchor $\Theta_B$ is linear,
it sends a core section $c^\dagger$, $c\in\Gamma(C)$ to a vertical
vector field on $B$.  This defines the \textbf{core-anchor}
$\partial_B\colon C\to B$; for $c\in\Gamma(C)$ we have
$\Theta_B(c^\dagger)=(\partial_Bc)^\uparrow$ (see \cite{Mackenzie92} and the following example).

Before we discuss the linear splittings of VB-algebroids, let us
discuss the following fundamental class of examples.
\begin{example}[The tangent double of a vector bundle]\label{tangent_double}
  Let $q_E\colon E\to M$ be a vector bundle.  Then the tangent bundle
  $TE$ has two vector bundle structures; one as the tangent bundle of
  the manifold $E$, and the second as a vector bundle over $TM$. The
  structure maps of $TE\to TM$ are the derivatives of the structure
  maps of $E\to M$. The space $TE$ is a double vector bundle with core
  bundle $E \to M$. The map $\bar{}\,\colon E\to p_E^{-1}(0^E)\cap
  (Tq_E)^{-1}(0^{TM})$ sends $e_m\in E_m$ to $\bar
  e_m=\left.\frac{d}{dt}\right\an{t=0}te_m\in T_{0^E_m}E$.
\begin{equation*}
\begin{xy}
\xymatrix{
TE \ar[d]_{Tq_E}\ar[r]^{p_E}& E\ar[d]^{q_E}\\
 TM\ar[r]_{p_M}& M}
\end{xy}
\end{equation*}
The core vector field corresponding to $e \in \Gamma(E)$ is the
vertical lift $e^{\uparrow}\colon E \to TE$, i.e.~the vector field
with flow $\phi\colon E\times \R\to E$, $\phi_t(e'_m)=e'_m+te(m)$. An
element of $\Gamma^\ell_E(TE)=\mx^\ell(E)$ is called a \textbf{linear
  vector field}.  A linear vector field $\xi\in\mx^l(E)$ covering
$X\in\mx(M)$ is equivalent to a derivation $D_\xi\colon \Gamma(E) \to
\Gamma(E)$ over $X\in \mx(M)$ (see
e.g.~\cite{Mackenzie05}).
The precise
correspondence is given by
\begin{equation}\label{ableitungen}
\xi(\ell_{\varepsilon}) 
= \ell_{D_\xi^*(\varepsilon)} \,\,\,\, \text{ and }  \,\,\, \xi(q_E^*f)= q_E^*(X(f))
\end{equation}
for all $\varepsilon\in\Gamma(E^*)$ and $f\in C^\infty(M)$.  We
write $\widehat D$ for the linear vector field in $\mx^l(E)$
corresponding in this manner to a derivation $D$ of $\Gamma(E)$. A
linear splitting $\Sigma$ for $(TE; TM, E; M)$ is equivalent to a
linear connection $\nabla\colon \mx(M)\times\Gamma(E)\to
\Gamma(E)$ defined by $\sigma_{TM}(X)=\widehat{\nabla_X}$ for all
$X\in\mx(M)$. 
It is easy
to see using \eqref{ableitungen} that
$\sigma:=\sigma_{TM}$ satisfies
\begin{equation}\label{Lie_bracket_VF}
  \left[\sigma(X), \sigma(Y)\right]=\sigma[X,Y]-\widetilde{R_\nabla(X,Y)},\quad
  \left[\sigma(X), e^\uparrow\right]=(\nabla_Xe)^\uparrow,\quad
  \left[e^\uparrow,e'^\uparrow\right]=0,
\end{equation}
for all $X,Y\in\mx(M)$ and $e,e'\in\Gamma(E)$.  That is, the Lie
bracket of vector fields on $M$ and the connection encode completely
the Lie bracket of vector fields on $E$.

\medskip

Now let us have a quick look at the other structure on the double
vector bundle $TE$. The lift
$\sigma_{E}^\nabla\colon\Gamma(E)\to\Gamma_{TM}^\ell(TE)$ is given by
\begin{equation*}
  \sigma_{E}^\nabla(e)(v_m) = T_me(v_m) +_{TM} (T_m0^E(v_m) -_E \overline{\nabla_{v_m} e}), \,\, v_m \in TM, \, e \in \Gamma(E).
\end{equation*}
Further, for $e\in\Gamma(E)$, the core section $e^\dagger\in\Gamma_{TM}(TE)$
is given by
$e^\dagger(v_m)=T_m0^E(v_m)+_E\left.\frac{d}{dt}\right\an{t=0}te(m)$.
\end{example}

Let $A\to M$ be a Lie algebroid and consider an $A$-connection
$\nabla$ on a vector bundle $E\to M$.  Then the space
$\Omega^\bullet(A,E)$ of $E$-valued forms has an induced
operator $\dr_\nabla$ given by:
\begin{equation*}
\begin{split}
  \dr_\nabla\omega(a_1,\ldots,a_{k+1})
=&\sum_{i<j}(-1)^{i+j}\omega([a_i,a_j],a_1,\ldots,\hat a_i,\ldots,\hat a_j,\ldots, a_{k+1})\\
  &\qquad +\sum_i(-1)^{i+1}\nabla_{a_i}(\omega(a_1,\ldots,\hat
  a_i,\ldots,a_{k+1}))
\end{split}
\end{equation*}
for all $\omega\in\Omega^k(A,E)$ and $a_1,\ldots,a_{k+1}\in\Gamma(A)$.
The connection is flat if and only if $\dr_\nabla^2=0$. Consider a
vector bundle $\mathcal E= E_0\oplus E_1$ and a Lie algebroid $A$ over
$M$.
A \emph{2-term representation up to homotopy of $A$ on $\mathcal E$}
\cite{ArCr12} (or a \emph{superrepresentation} \cite{GrMe10a}) 
is 
\begin{enumerate}
\item [(1)] a vector bundle map $\partial\colon E_0\to E_1$,
\item [(2)] two $A$-connections, $\nabla^0$ and $\nabla^1$ on $E_0$
  and $E_1$, respectively, such that $\partial \circ \nabla^0 =
  \nabla^1 \circ \partial$, \item [(3)] an element $R \in \Omega^2(A,
  \Hom(E_1, E_0))$ such that $R_{\nabla^0} = R\circ \partial$,
  $R_{\nabla^1}=\partial \circ R$ and $\dr_{\nabla^{\Hom}}R=0$,
  where $\nabla^{\Hom}$ is the connection induced on $\Hom(E_1,E_0)$
  by $\nabla^0$ and $\nabla^1$.
\end{enumerate}
For brevity we will call such a 2-term representation up to homotopy a
\textbf{2-re\-pre\-sen\-ta\-tion}.
\medskip

Let $(D\to B, A\to M)$ be a VB-Lie algebroid and choose a linear
splitting $\Sigma\colon A\times_MB\to D$. Since the anchor of a linear
section is a linear vector field, for each $a\in \Gamma(A)$ the vector field
$\Theta_B(\sigma_A(a))$ defines a derivation of $\Gamma(B)$ with
symbol $\rho(a)$. This defines a linear
connection $\nabla^{AB}\colon \Gamma(A)\times\Gamma(B)\to\Gamma(B)$ by
$\Theta_B(\sigma_A(a))=\widehat{\nabla_a^{AB}}$ for all
$a\in\Gamma(A)$.  Since the bracket of a linear section with a core
section is again a core section, we find a linear connection
$\nabla^{AC}\colon\Gamma(A)\times\Gamma(C)\to\Gamma(C)$ such that
$[\sigma_A(a),c^\dagger]=(\nabla_a^{AC}c)^\dagger$ for all
$c\in\Gamma(C)$ and $a\in\Gamma(A)$.  The difference
$\sigma_A[a_1,a_2]-[\sigma_A(a_1), \sigma_A(a_2)]$ is a core-linear
section for all $a_1,a_2\in\Gamma(A)$.  This defines a form
$R\in\Omega^2(A,\operatorname{Hom}(B,C))$ such that $[\sigma_A(a_1),
\sigma_A(a_2)]=\sigma_A[a_1,a_2]-\widetilde{R(a_1,a_2)}$, for all
$a_1,a_2\in\Gamma(A)$. This is done in \cite{GrMe10a}, which proves the following theorem.
\begin{theorem}\label{rajan}
  Let $(D \to B; A \to M)$ be a VB-algebroid and choose a linear
  splitting $\Sigma\colon A\times_MB\to D$.  The triple
  $(\nabla^{AB},\nabla^{AC},R)$ defined as above is a
  $2$-representation of $A$ on the complex $\partial_B\colon C\to B$.

  Conversely, let $(D;A,B;M)$ be a double vector bundle with core $C$
  such that $A$ has a Lie algebroid structure, and choose a linear
  splitting $\Sigma\colon A\times_MB\to D$. If
  $(\nabla^{AB},\nabla^{AC},R)$ is a 2-representation of $A$ on a
  morphism $\partial_B\colon C\to B$, then the three equations above
  and the core-anchor $\partial_B$ define a VB-algebroid structure on
  $(D\to B; A\to M)$.

\end{theorem}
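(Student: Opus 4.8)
The plan is to reduce everything to computations on the generating sections of $D\to B$. A linear splitting yields linear sections $\sigma_A(a)$, $a\in\Gamma(A)$, and core-linear sections $\widetilde\phi$, $\phi\in\Gamma(\Hom(B,C))$, which together generate $\Gamma^\ell_B(D)$ over $C^\infty(M)$ via the exact sequence relating $\Gamma^\ell_B(D)$ to $\Gamma(A)$ and $\Gamma(\Hom(B,C))$; the core sections $c^\dagger$, $c\in\Gamma(C)$, generate $\Gamma^c_B(D)$ over $C^\infty(M)$. Since $\Gamma_B(D)$ is generated as a $C^\infty(B)$-module by these sections, the Leibniz identity shows that the Lie algebroid bracket on $D\to B$ is completely determined by its values on them. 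The two statements of the theorem are then mutually inverse translations: in the first I read off the $2$-representation axioms from the linearity and the Jacobi identity of the given bracket, and in the second I use the axioms to define a bracket on the generators, extend by Leibniz, and verify it is a linear Lie algebroid bracket.

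For the first statement I would begin by verifying that $\nabla^{AB}$ and $\nabla^{AC}$ are genuine connections: $C^\infty(M)$-linearity in $a$ follows from $\sigma_A(fa)=q_B^*f\cdot\sigma_A(a)$, and the Leibniz rule in the second argument from the Leibniz identity for the bracket on $D\to B$ together with $\Theta_B(\sigma_A(a))=\widehat{\nabla^{AB}_a}$. Likewise $R$ is well defined and alternating because $\sigma_A[a_1,a_2]-[\sigma_A(a_1),\sigma_A(a_2)]$ is core-linear and $C^\infty(M)$-bilinear in $(a_1,a_2)$. The compatibility $\partial_B\circ\nabla^{AC}=\nabla^{AB}\circ\partial_B$ is obtained by applying the anchor $\Theta_B$, a morphism of Lie algebroids, to $[\sigma_A(a),c^\dagger]=(\nabla^{AC}_ac)^\dagger$ and using $[\widehat{D},e^\uparrow]=(De)^\uparrow$ for any derivation $D$ (cf.\ \eqref{Lie_bracket_VF}) together with $\Theta_B(c^\dagger)=(\partial_Bc)^\uparrow$. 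Applying $\Theta_B$ to $[\sigma_A(a_1),\sigma_A(a_2)]=\sigma_A[a_1,a_2]-\widetilde{R(a_1,a_2)}$, and using that $D\mapsto\widehat D$ intertwines the commutator of derivations of $B$ with the Lie bracket of linear vector fields, yields $R_{\nabla^{AB}}=\partial_B\circ R$. Finally, the Jacobi identity of the bracket on $D\to B$ evaluated on the triple $(\sigma_A(a_1),\sigma_A(a_2),c^\dagger)$ produces $R_{\nabla^{AC}}=R\circ\partial_B$, while on $(\sigma_A(a_1),\sigma_A(a_2),\sigma_A(a_3))$ it produces $\dr_{\nabla^{\Hom}}R=0$; these are exactly the remaining axioms.

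For the converse I would define $\Theta_B$ on sections by $\Theta_B(\sigma_A(a))=\widehat{\nabla^{AB}_a}$ and $\Theta_B(c^\dagger)=(\partial_Bc)^\uparrow$, define the bracket on generators by the three displayed equations together with $[c_1^\dagger,c_2^\dagger]=0$, and extend both by the Leibniz rule. I would first check that $\Theta_B$ arises from a genuine double vector bundle morphism $D\to TB$ over $\rho_A$ — here the linearity of $\widehat{\nabla^{AB}_a}$ and the tensoriality of $\partial_B$ are what is needed — and that the Leibniz extension of the bracket is consistent with the $C^\infty(M)$-linear relations among the generators. Skew-symmetry is immediate from the defining equations. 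The Jacobi identity is then verified on triples of generators by running the computations of the first part backwards: the axiom $R_{\nabla^{AC}}=R\circ\partial_B$ gives Jacobi on $(\sigma_A(a_1),\sigma_A(a_2),c^\dagger)$, the compatibility of $\partial_B$ and the connection axioms handle the mixed triples, and $\dr_{\nabla^{\Hom}}R=0$ gives Jacobi on $(\sigma_A(a_1),\sigma_A(a_2),\sigma_A(a_3))$.

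The main obstacle, in both directions but most acutely in the converse, is the all-linear case: verifying, or exploiting, that the Jacobiator of three linear sections vanishes is precisely the Bianchi-type identity $\dr_{\nabla^{\Hom}}R=0$, and one must carry the core-linear correction terms $\widetilde{R(a_i,a_j)}$ and the auxiliary brackets $[\widetilde\phi,c^\dagger]$ correctly through the computation. The accompanying bookkeeping — that the listed sections generate, that the bracket extends consistently by Leibniz, and that $\Theta_B$ is a well-defined bundle map over $\rho_A$ — is routine but is what upgrades the correspondence from a formal dictionary to a genuine equivalence.
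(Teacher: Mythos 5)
Your proposal is correct and follows essentially the same route as the proof the paper relies on: the paper does not reprove Theorem~\ref{rajan} but cites \cite{GrMe10a}, whose argument --- like the paper's own Appendix~\ref{appendix_proof_of_main} proof of the Courant analogue, Theorem~\ref{main} --- proceeds exactly as you describe, computing on the generators $\sigma_A(a)$, $c^\dagger$, $\widetilde{\phi}$ and translating linearity, Leibniz and Jacobi into the $2$-representation axioms and back. Your bookkeeping points (the auxiliary bracket $[\widetilde{\phi},c^\dagger]=-(\phi(\partial_Bc))^\dagger$, and the role of $\partial_B\circ\nabla^{AC}=\nabla^{AB}\circ\partial_B$ and $R_{\nabla^{AB}}=\partial_B\circ R$ in making the Jacobiator tensorial so that Jacobi on generator triples propagates through the Leibniz extension) are precisely the steps that make the converse direction rigorous.
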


\begin{example}\label{double_ruth}
  Choose a linear connection
  $\nabla\colon\mx(M)\times\Gamma(E)\to\Gamma(E)$ and consider the
  corresponding linear splitting $\Sigma^\nabla$ of $TE$ as in Example
  \ref{tangent_double}.  The description of the Lie bracket of vector
  fields in \eqref{Lie_bracket_VF} shows that the 2-representation
  induced by $\Sigma^\nabla$ is the 2-representation of $TM$ on
  $\Id_E\colon E\to E$ given by $(\nabla,\nabla,R_\nabla)$.
\end{example}

\subsubsection{Double Lie algebroids and matched pairs of $2$-representations}
\label{matched_pair_2_rep_sec}
If $D$ is a VB-algebroid with Lie algebroid structures on $D\to B$ and
$A\to M$, then the dual vector bundle $D\duer B\to B$ (see Appendix
\ref{appendix_dual}) has a \emph{Lie-Poisson structure} (a linear
Poisson structure), and the structure on $D\duer B$ is also
Lie-Poisson with respect to $D\duer B\to C^*$
\cite[3.4]{Mackenzie11}. Dualising this bundle gives a Lie algebroid
structure on $(D\duer B)\duer{C^*}\to C^*$. This equips the double
vector bundle $((D\duer B)\duer{C^*}; C^*,A;M)$ with a VB-algebroid
structure. Using the isomorphism defined by $-\nsp{\cdot}{\cdot}$ (see
Appendix \ref{appendix_dual}), the double vector bundle $(D\duer A\to
C^*;A\to M)$ is also a VB-algebroid. In the same manner, if $(D\to A,
B\to M)$ is a VB-algebroid then we use $\nsp{\cdot}{\cdot}$ to get a
VB-algebroid structure on $(D\duer B\to C^*;B\to M)$.

Let $\Sigma\colon A\times_MB\to D$ be a linear splitting of $D$ and
denote by $(\nabla^B,\nabla^C,R_A)$ the 2-representation of the Lie
algebroid $A$ on $\partial_B\colon C\to B$. The linear splitting
$\Sigma$ induces a linear splitting $\Sigma^\star\colon A\times_M
C^*\to D\duer A$ of $D\duer A$ (see Appendix \ref{appendix_dual}).
The 2-representation of $A$ that is associated to this splitting is
then $({\nabla^C}^*,{\nabla^B}^*,-R_A^*) $ on the complex
$\partial_B^*\colon B^*\to C^*$. This is proved in
the appendix  of \cite{DrJoOr15}. 

\medskip
A \textbf{double Lie algebroid} \cite{Mackenzie11} is a double vector
bundle $(D;A,B;M)$ with core $C$, and with Lie algebroid
structures on each of $A\to M$, $B\to M$, $D\to A$ and $D\to B$ such
that each pair of parallel Lie algebroids gives $D$ the structure of a
VB-algebroid, and such that
the pair $(D\duer A, D\duer B)$ with the induced Lie algebroid
structures on base $C^*$ and the pairing $\nsp{\cdot}{\cdot}$, is a
Lie bialgebroid.

Consider a double vector bundle $(D;A,B;M)$ with core $C$ and a VB-Lie
algebroid structure on each of its sides.  After the choice of a
splitting $\Sigma\colon A\times_M B\to D$, the Lie algebroid
structures on the two sides of $D$ are described as above by two
$2$-representations.  We prove in \cite{GrJoMaMe17} that $(D\duer A,
D\duer B)$ is a Lie bialgebroid over $C^*$ if and only if, for any
splitting of $D$, the two induced 2-representations form a matched
pair as in the following definition.

\begin{definition}\cite{GrJoMaMe17}\label{matched_pair_2_rep}
  Let $(A\to M, \rho_A, [\cdot\,,\cdot])$ and $(B\to M, \rho_B,
  [\cdot\,,\cdot])$ be two Lie algebroids and assume that $A$ acts on
  $\partial_B\colon C\to B$ up to homotopy via
  $(\nabla^{B},\nabla^{C}, R_{A})$ and $B$ acts on
  $\partial_A\colon C\to A$ up to homotopy via
  $(\nabla^{A},\nabla^{C}, R_{B})$\footnote{For the sake of
    simplicity, we write in this definition $\nabla$ for all the four
    connections. It will always be clear from the indexes which
    connection is meant. We write $\nabla^A$ for the $A$-connection
    induced by $\nabla^{AB}$ and $\nabla^{AC}$ on $\wedge^2 B^*\otimes
    C$ and $\nabla^B$ for the $B$-connection induced on $\wedge^2
    A^*\otimes C$.  }.  Then we say that the two representations up to
  homotopy form a matched pair if
\begin{enumerate}
\item[(M1)]
  $\nabla_{\partial_Ac_1}c_2-\nabla_{\partial_Bc_2}c_1=-(\nabla_{\partial_Ac_2}c_1-\nabla_{\partial_Bc_1}c_2)$,
\item[(M2)] $[a,\partial_Ac]=\partial_A(\nabla_ac)-\nabla_{\partial_Bc}a$,
\item[(M3)] $[b,\partial_Bc]=\partial_B(\nabla_bc)-\nabla_{\partial_Ac}b$,
\item[(M4)]
$\nabla_b\nabla_ac-\nabla_a\nabla_bc-\nabla_{\nabla_ba}c+\nabla_{\nabla_ab}c=
R_{B}(b,\partial_Bc)a-R_{A}(a,\partial_Ac)b$,
\item[(M5)]
  $\partial_A(R_{A}(a_1,a_2)b)=-\nabla_b[a_1,a_2]+[\nabla_ba_1,a_2]+[a_1,\nabla_ba_2]+\nabla_{\nabla_{a_2}b}a_1-\nabla_{\nabla_{a_1}b}a_2$,
\item[(M6)]
  $\partial_B(R_{B}(b_1,b_2)a)=-\nabla_a[b_1,b_2]+[\nabla_ab_1,b_2]+[b_1,\nabla_ab_2]+\nabla_{\nabla_{b_2}a}b_1-\nabla_{\nabla_{b_1}a}b_2$,
\end{enumerate}
for all $a,a_1,a_2\in\Gamma(A)$, $b,b_1,b_2\in\Gamma(B)$ and
$c,c_1,c_2\in\Gamma(C)$, and
\begin{enumerate}\setcounter{enumi}{6}
\item[(M7)] $\dr_{\nabla^A}R_{B}=\dr_{\nabla^B}R_{A}\in \Omega^2(A,
  \wedge^2B^*\otimes C)=\Omega^2(B,\wedge^2 A^*\otimes C)$, where
  $R_{B}$ is seen as an element of $\Omega^1(A, \wedge^2B^*\otimes
  C)$ and $R_{A}$ as an element of $\Omega^1(B, \wedge^2A^*\otimes
  C)$.
\end{enumerate}
\end{definition}

\subsection{The equivalence of $[2]$-manifolds with metric double vector bundles}\label{recall_n}
We quickly recall in this section the main result in
\cite{Jotz17a}. We refer the reader to
\cite{BoPo13, Jotz17a} for a quick review of split $\N$-manifolds, and for our
notation convention.  \medskip

A \textbf{metric double vector bundle} is a double vector bundle
$(\mathbb E, Q; B, M)$ with core $Q^*$, equipped with a \textbf{linear symmetric
  non-degenerate pairing $\mathbb E\times_B\mathbb E\to \R$},
i.e.~such that 
\begin{enumerate}
\item $\langle \tau_1^\dagger, \tau_2^\dagger\rangle=0$ for
  $\tau_1,\tau_2\in\Gamma(Q^*)$,
\item $\langle \chi, \tau^\dagger\rangle=q_B^*\langle q,\tau\rangle$
  for $\chi\in\Gamma_B^l(\mathbb E)$ linear over $q\in\Gamma(Q)$ and
  $\tau\in\Gamma(Q^*)$ and
\item $\langle\chi_1,\chi_2\rangle$ is a linear function on $B$ for
  $\chi_1,\chi_2\in \Gamma_B^l(\mathbb E)$.
\end{enumerate}
Note that the \textbf{opposite} $(\overline{\mathbb E};Q;B,M)$ of
a metric double vector bundle $(\mathbb E;B;Q,M)$ is the
metric double vector bundle with
$\langle\cdot\,,\cdot\rangle_{\overline{\mathbb
    E}}=-\langle\cdot\,,\cdot\rangle_{\mathbb E}$.

A linear
splitting $\Sigma\colon Q\times_MB\to \mathbb E$ is said to be
\textbf{Lagrangian} if its image is maximal isotropic in $\mathbb E\to
B$.  The corresponding horizontal lifts
$\sigma_Q\colon\Gamma(Q)\to\Gamma^l_B(\mathbb E)$ and
$\sigma_B\colon\Gamma(B)\to\Gamma^l_Q(\mathbb E)$
  are then also said to be
  \textbf{Lagrangian}. By definition, a horizontal lift $\sigma_Q\colon
\Gamma(Q)\to \Gamma^l_B(\mathbb E)$ is Lagrangian if and only if
$\langle \sigma_Q(q_1), \sigma_Q(q_2)\rangle=0$ for all $q_1,q_2\in
\Gamma(Q)$.
Showing the existence of a Lagrangian splitting of $\mathbb E$ is
relatively easy \cite{Jotz17a}.  Further, if $\Sigma^1$ and
$\Sigma^2\colon Q\times_M B\to \mathbb E$ are Lagrangian, then the
change of splitting $\phi_{12}\in\Gamma(Q^*\otimes Q^*\otimes B^*)$
defined by $\Sigma^2(q,b)=\Sigma^1(q,b)+\widetilde{\phi(q,b)}$ for all
$(q,b)\in Q\times_M B$, is a section of
$Q^*\wedge Q^* \otimes B^*$.

\begin{example}\label{metric_connections}
  Let $E\to M$ be a vector bundle endowed
  with a symmetric non-degenerate pairing
  $\langle\cdot\,,\cdot\rangle\colon E\times_M E\to \R$ (a
  \emph{metric vector bundle}).  Then
  $E\simeq E^*$ and the tangent double is a metric double vector
  bundle $(TE,E;TM,M)$ with pairing $TE\times_{TM}TE\to \R$ the
  tangent of the pairing $E\times_M E\to \R$. In particular, we have $\langle Te_1, Te_2\rangle_{TE}=\ell_{\dr\langle e_1,e_2\rangle}$, 
$\langle Te_1, e_2^\dagger\rangle_{TE}=p_M^* \langle e_1,e_2\rangle$ and $\langle e_1^\dagger, e_2^\dagger\rangle_{TE}=0$ for
$e_1,e_2\in\Gamma(E)$.

Recall that linear splittings of $TE$ are
equivalent to linear connections \linebreak $\nabla\colon
\mx(M)\times\Gamma(E)\to\Gamma(E)$.
The Lagrangian splittings of $TE$ are exactly the linear splittings
that correspond to \textbf{metric} connections, i.e.~linear
connections $\nabla\colon \mx(M)\times\Gamma(E)\to\Gamma(E)$ that
preserve the metric:
$\langle\nabla_\cdot e_1, e_2\rangle+\langle e_1,\nabla_\cdot
e_2\rangle=\dr\langle e_1, e_2\rangle$ for $e_1,e_2\in\Gamma(E)$.
\end{example}

Let $(\mathbb E, B; Q, M)$ be a metric double vector bundle. Define $\mathcal C(\mathbb
E)\subseteq \Gamma_Q^l(\mathbb E)$ as the $C^\infty(M)$-submodule of 
linear sections with isotropic image in $\mathbb E$.
After the choice of a Lagrangian splitting $\Sigma\colon Q\times_MB\to
\mathbb E$, $\mathcal C(\mathbb E)$ can be written
$\mathcal C(\mathbb
E):=\sigma_B(\Gamma(B))+
\{\tilde\omega\mid \omega\in\Gamma(Q^*\wedge Q^*)\}$.
This shows that $\mathcal
C(\mathbb E)$ together with $\Gamma^c_Q(\mathbb E)\simeq\Gamma(Q^*)$
span $\mathbb E$ as a vector bundle over $Q$.
\medskip

An \textbf{involutive double vector bundle} is a double vector bundle 
$(D,Q,Q,M)$ with core $B^*$ equipped with a morphism $\mathcal I\colon
D\to D$ of double vector bundles
satisfying $\mathcal I^2=\Id_D$ and $\pi_1\circ\mathcal I=\pi_2$,
$\pi_2\circ\mathcal I=\pi_1$, where $\pi_1,\pi_2\colon D\to Q$ are the
two side projections,
and with core morphism $-\Id_{B^*}\colon B^*\to B^*$. A \textbf{morphism} 
$\Omega\colon D_1\to D_2$
of \textbf{involutive double vector bundles} is a morphism 
of double vector bundles such that
$\Omega\circ\mathcal I_1=\mathcal I_2\circ\Omega$.
\cite[Proposition 3.15]{Jotz17a} proves a duality of involutive double vector bundles
with metric double vector bundles: the dual $(D\duer{\pi_1}; Q,B;M)$
with core $Q^*$ carries an induced linear metric. Conversely, given a
metric double vector bundle $(\mathbb E; Q,B;M)$ with core $Q^*$, the
dual
$(\mathbb E\duer{Q}; Q,Q;M)$ with core $B^*$ carries an induced
involution as above. 
We define morphisms of metric double vector bundles as the dual morphisms to morphisms of involutive double vector bundles.
A \textbf{morphism} 
$\Omega\colon \mathbb F\to\mathbb E$
of \textbf{metric double vector bundles}  is  hence a  relation
$\Omega\subseteq \overline{\mathbb F}\times\mathbb E$ that is the dual
of a morphism of involutive double vector bundles
$\omega\colon \mathbb F\duer P\to \mathbb E\duer Q$. 
\begin{equation*}
  \begin{xy}
    \xymatrix{\mathbb F\duer P\ar[rrr]^{\omega}\ar[rd]\ar[dd]& && \mathbb E\duer Q\ar[rd]\ar[dd]&\\
& P\ar[dd]\ar[rrr]^{\omega_P}&&&Q\ar[dd]\\
      P^{**}\ar[rd]\ar[rrr]&&&Q^{**}\ar[rd]&\\
&N\ar[rrr]^{\omega_0}&&&M}
\end{xy}
\end{equation*}
Note that he dual of $\Omega$ is compatible with the involutions if
and only if $\Omega$ is an isotropic subspace of
$\overline{\mathbb F}\times\mathbb E$.  
Equivalently \cite{Jotz17a}, one can define a morphism
$\Omega\colon \mathbb F\to\mathbb E$ of metric double vector bundles
as a pair of maps
$\omega^\star \colon \mathcal C(\mathbb E)\to \mathcal C(\mathbb F)$,
$\omega_P^\star \colon \Gamma(Q^*)\to \Gamma(P^*)$ together with a
smooth map $\omega_0 \colon N\to M$ such that
\begin{enumerate}
\item $\omega^\star\left(\widetilde{\tau_1\wedge\tau_2}\right)=\widetilde{\omega_P^\star\tau_1\wedge\omega_P^\star\tau_2}$,
\item
  $\omega^\star(q_Q^*f\cdot\chi)=q_P^*(\omega_0^*f)\cdot\omega^\star(\chi)$
  and
\item $\omega_P^\star(f\cdot \tau)=\omega_0^*f\cdot \omega_P^\star\tau$
\end{enumerate}
for all $\tau,\tau_1,\tau_2\in\Gamma(Q^*)$, $f\in C^\infty(M)$ and
$\chi \in \mathcal C(\mathbb E)$. We write $\operatorname{MDVB}$
for the obtained category of metric double vector bundles.
The following theorem is proved in \cite{Jotz17a} and
independently in \cite{delCarpio-Marek15}. 
\begin{theorem}[\cite{Jotz17a}]\label{main_crucial}
  There is a (covariant) equivalence between the category of
  $[2]$-manifolds and the category of involutive double vector
  bundles.
\end{theorem}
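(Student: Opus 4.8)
The plan is to establish the equivalence of Theorem \ref{main_crucial} by constructing explicit functors in both directions and showing they are mutually quasi-inverse. The key insight is that both categories admit a "split" description: a split $[2]$-manifold is determined by a graded vector bundle $E_1[-1]\oplus E_2[-2]$, while a decomposed involutive double vector bundle is determined by the choice of a linear splitting. So I would first pass to these split/decomposed models, prove the equivalence there by a direct dictionary, and then argue that the non-split cases reduce to the split ones.

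\medskip

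\emph{From $[2]$-manifolds to involutive double vector bundles.} Given a split $[2]$-manifold $\mathcal M=E_1[-1]\oplus E_2[-2]$, I would build the double vector bundle with sides $Q:=E_1^*$ on both sides and core $B^*$, where $B$ is read off from the degree-$2$ data: recall $C^\infty(\mathcal M)^2=\Gamma(E_2^*\oplus\wedge^2 E_1^*)$, so setting $Q=E_1^*$ the core should encode $E_2$ (i.e.~$B^*=E_2$, so $B=E_2^*$). The involution $\mathcal I$ swapping the two copies of $Q$ with core morphism $-\Id_{B^*}$ encodes the graded-antisymmetry of the degree-$2$ functions, i.e.~the $\wedge^2 E_1^*$ summand. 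On morphisms I would send a split morphism $\mu=(\mu_0,\mu_1,\mu_2,\mu_{12})$ to the double vector bundle morphism determined by $\mu_1$ on the sides $Q$, by $\mu_2$ on the core, and with $\mu_{12}\colon\wedge^2 F_1\to E_2$ recording the change-of-splitting discrepancy; the compatibility $\Omega\circ\mathcal I_1=\mathcal I_2\circ\Omega$ should follow because $\mu_{12}$ lands in $E_2$ and not in $\wedge^2 E_1^*$.

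\medskip

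\emph{The inverse functor and independence of splitting.} Conversely, from an involutive double vector bundle $(D,Q,Q,M)$ with core $B^*$, I would choose a linear splitting and produce the graded vector bundle $Q^*[-1]\oplus B^*[-2]$, recovering the degree-$1$ generators as $\Gamma(Q^*)\cong\Gamma_Q^c(D)$ (via the side-core correspondence) and the degree-$2$ functions from $\mathcal C(D)$ together with $\wedge^2 Q^*$; here the involution guarantees that the isotropic (i.e.~$\mathcal I$-antisymmetric) part of the linear sections is exactly what glues to give well-defined degree-$2$ functions. The nontrivial point is \emph{well-definedness}: a different splitting changes the identification by a section of $Q^*\wedge Q^*\otimes B^*$ (the change-of-splitting formula recalled for metric double vector bundles dualizes to the involutive side), and I must check that this change is absorbed by the $[2]$-manifold automorphisms $\mu_{12}$, so that the resulting $[2]$-manifold is canonical up to canonical isomorphism. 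Composing the two constructions and tracing the splitting-dependence gives natural isomorphisms witnessing the equivalence.

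\medskip

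\emph{The main obstacle} I expect is precisely the coordinate-/splitting-independence of the inverse functor: one must verify that the degree-$2$ part of the algebra of functions, which mixes the "new" generators in $\Gamma(B^*)$ with the antisymmetric products $\Gamma(\wedge^2 Q^*)$, is reconstructed intrinsically from $\mathcal C(D)$ modulo the core sections, and that a change of Lagrangian-type splitting acts on this data exactly as a $[2]$-manifold isomorphism of the form $\mu=(\Id,\Id,\Id,\mu_{12})$. This is where the involution $\mathcal I$ does the essential work, since it cuts out the antisymmetric part $Q^*\wedge Q^*$ from $Q^*\otimes Q^*$, matching the graded-commutativity of degree-$1$ functions in the $[2]$-manifold; making this matching functorial (compatible with morphisms and base maps $\omega_0$) is the bulk of the verification, while the remaining checks (that the functors preserve identities and compositions) are routine.
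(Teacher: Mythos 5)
Your proposal has a genuine structural error before the deferred naturality checks even come into play: you work directly on the involutive double vector bundle $D$ but import structures that only exist on its metric dual, and your dualisation bookkeeping is off in a way that breaks functoriality. Under the paper's conventions, the split $[2]$-manifold $Q[-1]\oplus B^*[-2]$ (so $E_1=Q$, $E_2=B^*$, since $C^\infty(\mathcal M)^1=\Gamma(E_1^*)$) corresponds to the decomposed metric double vector bundle $(Q\times_M B\times_M Q^*;B,Q;M)$ with core $Q^*$, and the involutive double vector bundle is its dual over $Q$, with \emph{both sides equal to $Q=E_1$}, not $E_1^*$, and core $B^*=E_2$. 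With your choice of sides $E_1^*$, a morphism of split $[2]$-manifolds with component $\mu_1\colon F_1\to E_1$ induces on the sides a map $E_1^*\to F_1^*$, i.e.~pointing the wrong way, so your assignment cannot define a covariant functor on morphisms. Moreover the identification ``$\Gamma(Q^*)\cong\Gamma_Q^c(D)$'' is simply false: the core of $D$ is $B^*$, so $\Gamma^c_Q(D)\simeq\Gamma(B^*)$. It is the metric dual $\mathbb E=D\duer{\pi_1}$ (sides $Q$ and $B$, core $Q^*$) whose core sections yield the degree-$1$ functions, and it is there --- not on $D$ --- that the submodule $\mathcal C(\mathbb E)$ of \emph{isotropic} linear sections makes sense, since isotropy refers to the linear metric, not to the involution $\mathcal I$. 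As written, your inverse functor extracts the graded function algebra from the wrong object.

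Beyond this, your route differs from the paper's in a way that leaves the main burden unaddressed. The proof recalled in the paper (from \cite{Jotz17a}) is splitting-free: the geometrisation functor collates local trivialisations of the $[2]$-manifold, whose change-of-trivialisation cocycles are exactly double vector bundle atlas cocycles, and the algebraisation functor defines the function algebra intrinsically --- degree $1$ as $\Gamma(Q^*)$ via core sections of the metric dual, degree $2$ as $\mathcal C(\mathbb E)$, with $\tau_1\cdot\tau_2=\widetilde{\tau_1\wedge\tau_2}$ --- while morphisms of metric double vector bundles are \emph{defined} as relations dual to honest morphisms of involutive double vector bundles, a point you gloss over when asserting that compatibility with $\mathcal I$ ``should follow''. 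Your global-splitting strategy is not hopeless in principle (every double vector bundle admits a linear splitting, every $[2]$-manifold splits non-canonically, and the change-of-splitting action of $\Gamma(Q^*\wedge Q^*\otimes B^*)$ does match the automorphisms $(\Id,\Id,\Id,\mu_{12})$, in the same spirit as Proposition \ref{change_of_lift} at the Courant level), but then essentially all the content of the theorem sits in the coherence of the chosen-splitting identifications across arbitrary morphisms, including non-identity base maps $\omega_0$ --- and for that step you correctly name the obstacle but supply no argument.
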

Combining the obtained equivalence with the (contravariant)
dualisation equivalence of $\operatorname{IDVB}$ with
$\operatorname{MDVB}$ yields a (contravariant) equivalence between the category of
metric double vector bundles with the morphisms defined above and the
category of $[2]$-manifolds.  This equivalence establishes in
particular an equivalence between split $[2]$-manifold
$\mathcal M=Q[-1]\oplus B^*[-2]$ and the decomposed metric double
vector bundle $(Q\times_M B\times_MQ^*,B,Q,M)$ with the obvious linear
metric over $B$.

We quickly describe the functors between the two categories.  To
construct the geometrisation functor
$\mathcal G\colon [2]\rm{-Man}\to \operatorname{MDVB}$, take a
$[2]$-manifold and considers its local trivialisations.  Changes of
local trivialisation define a set of cocycle conditions, that
correspond exactly to cocycle conditions for a double vector bundle
atlas. The local trivialisations can hence be collated to a double
vector bundle, which naturally inherits a linear pairing. 
See \cite{Jotz17a} for more details, and remark that this
construction is as simple as the construction of a vector
bundle from a locally free and finitely generated sheaf of
$C^\infty(M)$-modules.

Conversely, the algebraisation functor $\mathcal A$ sends a metric
double vector bundle $\mathbb E$ to the $[2]$-manifold defined as
follows: the functions of degree $1$ are the sections of
$\Gamma_Q^c(\mathbb E)\simeq\Gamma(Q^*)$, and the functions of degree
$2$ are the elements of $\mathcal C(\mathbb E)$. The multiplication of
two core sections $\tau_1,\tau_2\in\Gamma(Q^*)$ is the core-linear
section $\widetilde{\tau_1\wedge\tau_2}\in\mathcal C(\mathbb E)$.

Note that while that equivalence can be
seen as the special case of trivial homological vector field vs trivial
bracket and anchor of Li-Bland's bijection of Lie $2$-algebroids with
VB-Courant algebroids \cite{Li-Bland12}, this corollary is not given
there and only a very careful study of Li-Bland's proof, which would
amount to the work done in \cite{Jotz17a} would yield it.

\section{Split Lie 2-algebroids}\label{sec:split_lie_2}
In this section we recall the notions of Courant algebroids, Dirac
structures, dull algebroids, Dorfman connections and (split) Lie
$2$-algebroids.
\subsection{Courant algebroids}\label{background_courant_notions}
We introduce in this section a generalisation of the notion of Courant
algebroid, namely the one of \emph{degenerate Courant algebroid with
  pairing in a vector bundle}.  Later we will see that the fat bundle
associated to a VB-Courant algebroid carries a natural Courant
algebroid structure with pairing in the dual of the base.

An anchored vector bundle is a vector bundle $Q\to M$ endowed with a
vector bundle morphism $\rho_Q\colon Q\to TM$ over the identity.
Consider an anchored vector bundle $(\mathsf E\to M, \rho)$ and a
vector bundle $V$ over the same base $M$ together with a morphism
$\tilde\rho\colon\mathsf E\to \operatorname{Der}(V)$, such that the
symbol of $\tilde\rho(e)$ is $\rho(e)\in \mx(M)$ for all $e\in
\Gamma(\mathsf E)$.  Assume that $\mathsf E$ is paired with itself via
a nondegenerate pairing $\langle\cdot\,,\cdot\rangle\colon \mathsf
E\times_M \mathsf E\to V$ with values in $V$. Define $\mathcal D\colon
\Gamma(V)\to \Gamma(\mathsf E)$ by $\langle \mathcal Dv,
e\rangle=\tilde\rho(e)(v)$ for all $v\in\Gamma(V)$.  Then $\mathsf
E\to M$ is a \textbf{Courant algebroid with pairing in $V$} if
$\mathsf E$ is in addition equipped with an $\R$-bilinear bracket
$\lb\cdot\,,\cdot\rb$ on the smooth sections $\Gamma(\mathsf E)$ such
that the following conditions are satisfied:
\begin{enumerate}
\item[(CA1)] $\lb e_1, \lb e_2, e_3\rb\rb= \lb \lb e_1, e_2\rb, e_3\rb+ \lb
  e_2, \lb e_1, e_3\rb\rb$,
\item[(CA2)] $\tilde\rho(e_1 )\langle e_2, e_3\rangle= \langle\lb e_1,
  e_2\rb, e_3\rangle + \langle e_2, \lb e_1 , e_3\rb\rangle$,
\item[(CA3)] $\lb e_1, e_2\rb+\lb e_2, e_1\rb =\mathcal D\langle e_1 ,
  e_2\rangle$,
\item[(CA4)]     $\tilde\rho\lb e_1, e_2\rb = [\tilde\rho(e_1), \tilde\rho(e_2)]$
\end{enumerate}
for all $e_1, e_2, e_3\in\Gamma(\mathsf E)$ and $f\in C^\infty(M)$.
Equation (CA2) implies $\lb e_1, f e_2\rb= f \lb e_1 , e_2\rb+
(\rho(e_1 )f )e_2$ for all $e_1, e_2\in\Gamma(\mathsf E)$ and $f\in
C^\infty(M)$.  If $V=\mathbb R\times M\to M$ is in addition the
trivial bundle, then $\mathcal D = % \beta\inv\circ
\rho^*\circ\dr \colon C^\infty(M)\to\Gamma(\mathsf E)$, where $\mathsf
E$ is identified with $\mathsf E^*$ via the pairing.  The quadruple
$(\mathsf E\to M, \rho, \langle\cdot\,,\cdot\rangle,
\lb\cdot\,,\cdot\rb)$ is then a \textbf{Courant algebroid}
\cite{LiWeXu97,Roytenberg99} and (CA4) follows then from (CA1), (CA2)
and (CA3) (see \cite{Uchino02} and also \cite{Jotz13a} for a quicker
proof).

Note that Courant algebroids with a pairing in a vector bundle $E$ were defined in
\cite{ChLiSh10} and called \emph{$E$-Courant algebroids}. 
It is easy
to check that Li-Bland's \emph{$AV$-Courant algebroids}
\cite{Li-Bland11} yield a special class of degenerate Courant
algebroids with pairing in $V$. The examples of Courant algebroids
with pairing in a vector bundle that we will get in Theorem \ref{fat} are
\emph{not} $AV$-Courant algebroids, so the two notions are
distinct.

\medskip

In our study of VB-Courant algebroids, we will need the following two lemmas. 
\begin{lemma}[\cite{Roytenberg02}]\label{roytenberg_useful}
  Let $(\mathsf E\to M, \rho, \langle\cdot\,,\cdot\rangle,
  \lb\cdot\,,\cdot\rb)$ be a Courant algebroid.  For all $\theta\in
  \Omega^1(M)$ and $e\in\Gamma(\mathsf E)$, we have:
  \[ \lb e,
  \rho^*\theta\rb=\rho^*(\ldr{\rho(e)}\theta),
  \qquad \lb\rho^*\theta,
  e\rb=-\rho^*(\ip{\rho(e)}\dr\theta)
\]
and so
$\rho(\rho^*\theta)=0$, which implies $\rho\circ \mathcal D=0$.
\end{lemma}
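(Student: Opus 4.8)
The plan is to prove the two bracket identities and then read off $\rho\circ\mathcal D=0$; the key device for the bracket identities is to test an equality of sections against an arbitrary $e'\in\Gamma(\mathsf E)$ and appeal to non-degeneracy of the pairing. Throughout I use only (CA2), (CA3) and (CA4); (CA1) is not needed.

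First I would establish $\lb e,\rho^*\theta\rb=\rho^*(\ldr{\rho(e)}\theta)$ directly for an arbitrary one-form $\theta$. Under the identification $\mathsf E\simeq\mathsf E^*$ the adjoint satisfies $\langle\rho^*\eta,e'\rangle=\eta(\rho(e'))$, so pairing the right-hand side with $e'$ gives $(\ldr{\rho(e)}\theta)(\rho(e'))=\rho(e)\big(\theta(\rho(e'))\big)-\theta\big([\rho(e),\rho(e')]\big)$. Pairing the left-hand side with $e'$ and using (CA2) gives $\langle\lb e,\rho^*\theta\rb,e'\rangle=\rho(e)\langle\rho^*\theta,e'\rangle-\langle\rho^*\theta,\lb e,e'\rb\rangle$; rewriting $\langle\rho^*\theta,e'\rangle=\theta(\rho(e'))$ and using (CA4) in the form $\rho\lb e,e'\rb=[\rho(e),\rho(e')]$ produces exactly the same expression. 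Non-degeneracy then yields the first identity at once, with no need to reduce to exact forms.

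The second identity then follows formally. By (CA3), $\lb\rho^*\theta,e\rb=\mathcal D\langle\rho^*\theta,e\rangle-\lb e,\rho^*\theta\rb$. Since $\langle\rho^*\theta,e\rangle=\ip{\rho(e)}\theta$ and $\mathcal D=\rho^*\circ\dr$, the first term is $\rho^*\dr(\ip{\rho(e)}\theta)$, while the identity just proved turns the second into $\rho^*(\ldr{\rho(e)}\theta)$; Cartan's formula $\ldr{\rho(e)}\theta=\dr\ip{\rho(e)}\theta+\ip{\rho(e)}\dr\theta$ collapses the difference to $-\rho^*(\ip{\rho(e)}\dr\theta)$.

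Finally I would derive $\rho\circ\rho^*=0$. Setting $\theta=\dr f$ in the second identity and using $\dr^2=0$ shows $\lb\mathcal Df,e\rb=\lb\rho^*\dr f,e\rb=0$ for all $e$ and $f$. Feeding $e_1=\mathcal Df$ into (CA2) then gives $\rho(\mathcal Df)\langle e_2,e_3\rangle=0$ for all $e_2,e_3$, since both terms on the right vanish. The only step that is not pure bookkeeping, and which I expect to be the main point, is to upgrade this to $\rho(\mathcal Df)=0$: non-degeneracy lets me represent any function locally as a pairing, since near a point one may choose $e_2$ nonvanishing, find a local section $s$ with $\langle e_2,s\rangle=1$, and write an arbitrary $k$ as $\langle e_2,ks\rangle$. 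Hence $\rho\circ\mathcal D=0$, and writing a general one-form locally as $\sum_i g_i\,\dr f_i$ and using $C^\infty(M)$-linearity of $\rho$ and $\rho^*$ promotes this to $\rho(\rho^*\theta)=0$.
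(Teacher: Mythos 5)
Your proof is correct, and there is nothing in the paper to compare it against: the lemma is stated with a citation to Roytenberg and no proof is given in the text, so any complete derivation from the axioms is acceptable here. Your argument is the standard one, and each step checks out: pairing $\lb e,\rho^*\theta\rb$ against an arbitrary $e'$ and using (CA2) together with (CA4) gives exactly $\rho(e)\bigl(\theta(\rho(e'))\bigr)-\theta\bigl([\rho(e),\rho(e')]\bigr)=(\ldr{\rho(e)}\theta)(\rho(e'))$, and fiberwise non-degeneracy upgrades the equality of pairings to an equality of sections; the second identity is then pure bookkeeping via (CA3) and Cartan's formula; and your localization step (representing an arbitrary function near a point as $\langle e_2,ks\rangle$ with $\langle e_2,s\rangle=1$) is a legitimate way to pass from $\rho(\mathcal Df)\langle e_2,e_3\rangle=0$ to $\rho(\mathcal Df)=0$, after which $C^\infty(M)$-linearity of $\rho\circ\rho^*$ handles general $\theta=\sum_i g_i\,\dr f_i$. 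Two small remarks. First, your use of (CA4) is unobjectionable since the paper lists it among the axioms, but it is worth knowing that for ordinary Courant algebroids ($V=\R\times M$) the paper itself notes that (CA4) is a consequence of (CA1)--(CA3), so a proof avoiding (CA4) would be marginally more economical in its hypotheses; as it stands your proof also applies verbatim in settings where (CA4) is imposed but (CA1) might not be. Second, the final conclusion $\rho\circ\rho^*=0$ admits an even shorter route: (CA3) with $e_1=e_2=e$ plus (CA4) gives $\rho\,\mathcal D\langle e,e\rangle=2\rho\lb e,e\rb=[\rho(e),\rho(e)]\cdot 2^{-1}\cdot 2=0$, and polarization plus your same local-generation argument finishes; this is essentially equivalent to what you did via $\theta=\dr f$.
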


\begin{lemma}[\cite{Li-Bland12}]\label{useful_lemma}
  Let $\mathsf {E}\to M$ be a vector bundle, $\rho\colon \mathsf{E}\to
  TM$ be a bundle map, $\langle\cdot,\cdot\rangle$ a nondegenerate
  pairing on $\mathsf{E}$, and let $\mathcal
  S\subseteq\Gamma(\mathsf{E})$ be a subspace of sections which
  generates $\Gamma(\mathsf{E})$ as a $C^\infty(M)$-module. Suppose
  that $\lb\cdot\,,\cdot\rb:\mathcal S\times\mathcal S\to \mathcal S$
  is a bracket satisfying
\begin{enumerate}
\item $\lb s_1,\lb s_2,s_3\rb\rb=\lb\lb s_1,s_2\rb,s_3\rb
+\lb s_2,\lb s_1,s_3\rb\rb$, 
\item $\rho(s_1)\langle s_2,s_3\rangle=\langle
  \lb s_1,s_2\rb, s_3\rangle+\langle s_2, \lb
  s_1,s_3\rb\rangle$,
\item $\lb s_1,s_2\rb+\lb s_2,s_1\rb=\rho^*\dr
  \langle s_1,s_2\rangle$,
\item $\rho\lb s_1,s_2\rb=[\rho(s_1),\rho(s_2)]$,
\end{enumerate}
for any $s_i\in \mathcal S$, and that $\rho\circ\rho^*=0$. Then
there is a unique extension of $\lb\cdot\,,\cdot\rb$ to a 
bracket on all of $\Gamma(\mathsf{E})$ such that $(\mathsf E, \rho,
\langle\cdot\,,\cdot\rangle, \lb\cdot\,,\cdot\rb)$ is a Courant algebroid.
\end{lemma}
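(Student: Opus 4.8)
The plan is to let the Courant-algebroid axioms themselves dictate the extension, then to check that the resulting bracket is well defined and satisfies (CA1)--(CA4) on all of $\Gamma(\mathsf E)$. Uniqueness comes for free: in any Courant algebroid (CA2) forces the Leibniz rule $\lb e_1, fe_2\rb = f\lb e_1,e_2\rb + (\rho(e_1)f)e_2$ already noted after (CA2), and combining this with (CA3) (using $\mathcal D=\rho^*\dr$) forces the first-argument rule
\[
\lb fe_1, e_2\rb = f\lb e_1,e_2\rb - (\rho(e_2)f)e_1 + \langle e_1,e_2\rangle\,\rho^*\dr f .
\]
Since $\mathcal S$ generates $\Gamma(\mathsf E)$ over $C^\infty(M)$, these two rules determine $\lb\cdot\,,\cdot\rb$ on $\Gamma(\mathsf E)\times\Gamma(\mathsf E)$ from its values on $\mathcal S\times\mathcal S$, giving uniqueness immediately.

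For existence I would build the bracket in two stages and prove well-definedness by pairing against generators (using that non-degeneracy lets me detect a section through its pairings with elements of the generating set $\mathcal S$). \emph{Stage 1:} for $s\in\mathcal S$ and $e=\sum_i f_i s_i$ with $s_i\in\mathcal S$, set $\lb s,e\rb:=\sum_i\big(f_i\lb s,s_i\rb+(\rho(s)f_i)s_i\big)$. Pairing the right-hand side with an arbitrary $s'\in\mathcal S$ and rewriting $\langle\lb s,s_i\rb,s'\rangle$ by hypothesis (2), the product rule for $\rho(s)$ makes the sum telescope to $\rho(s)\langle\sum_i f_is_i,s'\rangle-\langle\sum_i f_is_i,\lb s,s'\rb\rangle$, which vanishes when $\sum_i f_is_i=0$; the same computation shows that (2) persists with a generator in the first slot and arbitrary entries elsewhere. \emph{Stage 2:} for $e_1=\sum_k g_k s_k'$ and arbitrary $e$, set $\lb e_1,e\rb:=\sum_k\big(g_k\lb s_k',e\rb-(\rho(e)g_k)s_k'+\langle s_k',e\rangle\,\rho^*\dr g_k\big)$ with $\lb s_k',e\rb$ from Stage 1. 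Well-definedness is checked the same way: pairing with $s''\in\mathcal S$, substituting the Stage-1 form of (2), trading $\lb s_k',s''\rb$ for $\lb s'',s_k'\rb$ via hypothesis (3), and using $\langle e,\rho^*\dr\phi\rangle=\rho(e)\phi$ with the symmetry of the pairing, all terms reorganise into $\rho(e)\langle e_1,s''\rangle$- and $\langle e_1,\cdot\rangle$-type expressions that die when $e_1=0$.

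With the bracket in hand, I would verify the axioms by the standard tensoriality argument. Each of (CA1), (CA2), (CA3) holds when all entries lie in $\mathcal S$ (these are precisely hypotheses (1), (2), (3)), and I would check that the quantity ``LHS $-$ RHS'' scales correctly when one entry is multiplied by $f\in C^\infty(M)$, the Leibniz corrections cancelling by means of the already-verified lower axioms. Axiom (CA4) propagates cleanly: the first-argument rule gives $\rho\lb fe_1,e_2\rb=f\rho\lb e_1,e_2\rb-(\rho(e_2)f)\rho(e_1)+\langle e_1,e_2\rangle\,\rho(\rho^*\dr f)$, the last term vanishing exactly because $\rho\circ\rho^*=0$, while the remainder equals $[\rho(fe_1),\rho(e_2)]$; this is the one place where the hypothesis $\rho\circ\rho^*=0$ is indispensable (see also Lemma \ref{roytenberg_useful}).

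I expect the main obstacle to be the propagation of the Jacobi identity (CA1). Because of the nested brackets, multiplying a single argument by $f$ generates many correction terms involving $\rho$, $\rho^*\dr$ and the pairing, and showing that they cancel requires (CA2), (CA3) and (CA4) for \emph{arbitrary} sections simultaneously, together with $\rho\circ\rho^*=0$. Everything else is bookkeeping, but this cancellation is the computational heart of the argument; alternatively, once (CA1)--(CA3) are established for all sections one may instead invoke the result cited just after (CA4) to obtain (CA4) without separate verification.
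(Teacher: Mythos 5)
The paper itself contains no proof of this lemma: it is quoted from Li-Bland's thesis \cite{Li-Bland12} and used as a black box, so there is no internal argument to compare against and your proposal has to stand on its own — which, in its main line, it does. The uniqueness argument is correct: (CA2) with the nondegenerate pairing forces the right-slot Leibniz rule, (CA3) with $\mathcal D=\rho^*\dr$ then forces the left-slot rule you write, and generation of $\Gamma(\mathsf E)$ over $C^\infty(M)$ pins the bracket down. Your existence scheme is the standard (and surely Li-Bland's) route: the two-stage Leibniz extension, with well-definedness tested by pairing against $\mathcal S$ — this works because generation over $C^\infty(M)$ makes $\{s(m)\mid s\in\mathcal S\}$ span each fibre $\mathsf E_m$, so by nondegeneracy pairings against $\mathcal S$ detect zero; your telescoping computations in both stages are correct, as is the consistency of Stage 2 with Stage 1 on trivial expansions. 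The order of verification — (CA2), (CA3), then (CA4), then (CA1) — is also the right one: the third-slot scaling defect of the Jacobiator is $\bigl([\rho(e_1),\rho(e_2)]-\rho\lb e_1,e_2\rb\bigr)(f)\,e_3$, so (CA1) cannot be propagated before (CA4), and (CA4) itself extends trivially since its defect is tensorial up to the term $\langle e_1,e_2\rangle\,\rho(\rho^*\dr f)$, killed by $\rho\circ\rho^*=0$ — exactly as you say.

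The one genuine flaw is the closing ``alternatively'': invoking the Uchino-type result \cite{Uchino02} to obtain (CA4) without separate verification is circular, for precisely the reason you state two sentences earlier — establishing (CA1) on arbitrary sections already consumes (CA4) on arbitrary sections, so (CA1)--(CA3) cannot be ``established for all sections'' first. Since your direct propagation of (CA4) is a two-line computation whose only input is $\rho\circ\rho^*=0$, the remedy is simply to delete that aside. Beyond this, the only thing separating the proposal from a complete proof is carrying out the (admittedly lengthy but routine) cancellation in the (CA1) bookkeeping, which you correctly identify as the computational heart but do not perform.
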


\medskip

A \textbf{Dirac structure with support} \cite{AlXu01} in a Courant
algebroid $\mathsf E\to M$ is a subbundle $D\to S$ over a sub-manifold
$S$ of $M$, such that $D(s)$ is maximal isotropic in $\mathsf E(s)$
for all $s\in S$ and
\[ e_1\an{S}\in\Gamma_S(D), e_2\an{S}\in\Gamma_S(D) \quad \Rightarrow\quad \lb e_1, e_2\rb\an{S}\in\Gamma_S(D)
\]
for all $e_1,e_2\in\Gamma(\mathsf E)$.
We leave to the reader the proof of the following lemma.
\begin{lemma}\label{useful_for_dirac_w_support}
  Let $\mathsf E\to M$ be a Courant algebroid and $D\to S$ a
  subbundle; with $S$ a sub-manifold of $M$. Assume that $D\to S$ is
  spanned by the restrictions to $S$ of a family $\mathcal S\subseteq
  \Gamma(\mathsf E)$ of sections of $\mathsf E$. Then $D$ is a Dirac
  structure with support $S$ if and only if
\begin{enumerate}
\item $\rho_{\mathsf E}(e)(s)\in T_sS$ for all $e\in\mathcal S$ and $s\in S$,
\item $D_s$ is Lagrangian in $\mathbb E_s$ for all $s\in S$ and
\item $\lb e_1,e_2\rb\an{S}\in\Gamma_S(D)$ for all $e_1,e_2\in\mathcal
  S$.
\end{enumerate}
\end{lemma}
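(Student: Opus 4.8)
The plan is to prove both implications, handling the forward direction first. Suppose $D$ is a Dirac structure with support $S$. Then (2) is precisely the maximal isotropy built into the definition, and (3) is the closure condition of the definition applied to the elements of $\mathcal S$, whose restrictions lie in $D$ by hypothesis. The one nontrivial point is the anchor-tangency (1), which I would obtain by testing closure against \emph{arbitrary} extensions: for $e\in\mathcal S$, any $w\in\Gamma(\mathsf E)$ and any $f\in C^\infty(M)$ with $f\an{S}=0$, the section $fw$ restricts to the zero section of $D$, so $\lb e,fw\rb\an{S}\in\Gamma_S(D)$. Since $\lb e,fw\rb=f\lb e,w\rb+(\rho(e)f)w$ and $f\an{S}=0$, this restriction equals $(\rho(e)f)\an{S}\cdot w\an{S}$. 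Hence $(\rho(e)f)(s)\,w(s)\in D_s$ for all $w$; as $D_s$ is a proper subspace (it is Lagrangian) while $w(s)$ exhausts $\mathsf E_s$, this forces $(\rho(e)f)(s)=0$. Letting $f$ range over all functions vanishing on $S$ gives $\rho(e)(s)\in T_sS$, which is (1).

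For the converse, (2) again supplies the maximal isotropy, so the real task is to promote the closure (3), known only on $\mathcal S$, to all pairs $e_1,e_2\in\Gamma(\mathsf E)$ whose restrictions lie in $D$. I would first record two consequences of (1). Since $\rho$ is fibrewise linear and $\mathcal S\an{S}$ spans $D$, (1) yields $\rho(d)\in T_sS$ for every $d\in D_s$. From this, $\rho^*\dr h\an{S}\in\Gamma_S(D)$ whenever $h\an{S}=0$, because $\langle\rho^*\dr h,d\rangle(s)=\dr h(s)(\rho(d)(s))=0$ for $d\in D_s$, so $\rho^*\dr h(s)\in D_s^\perp=D_s$. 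Next, if $e_0\an{S}=0$ and $e'\an{S}\in\Gamma_S(D)$, then writing $e_0$ in a local frame with coefficients vanishing on $S$ and using the Leibniz rule in the second slot together with $\rho(e')(s)\in T_sS$ shows $\lb e',e_0\rb\an{S}=0$; rewriting $\lb e_0,e'\rb$ via (CA3) as $-\lb e',e_0\rb+\rho^*\dr\langle e_0,e'\rangle$ and using the previous fact (noting $\langle e_0,e'\rangle\an{S}=0$) shows $\lb e_0,e'\rb\an{S}\in\Gamma_S(D)$.

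With these preliminaries the bootstrap is a direct computation. Near a point of $S$ I would write $e_1=\sum_i f_ia_i+r_1$ and $e_2=\sum_j g_jb_j+r_2$ with $a_i,b_j\in\mathcal S$, $f_i,g_j\in C^\infty(M)$ and $r_1\an{S}=r_2\an{S}=0$, which is possible since $\mathcal S\an{S}$ spans $D$. By $\R$-bilinearity of the bracket, all terms containing $r_1$ or $r_2$ restrict into $\Gamma_S(D)$ by the second preliminary fact, leaving the terms $\lb f_ia_i,g_jb_j\rb$. Expanding these with the Leibniz rule in the second slot and the identity $\lb fa,b\rb=f\lb a,b\rb-(\rho(b)f)a+\langle a,b\rangle\rho^*\dr f$, each summand restricts into $\Gamma_S(D)$: the $f_ig_j\lb a_i,b_j\rb$-term by (3), the terms retaining a factor $a_i\an{S}$ or $b_j\an{S}$ because those lie in $D$, and the correction $\langle a_i,b_j\rangle\rho^*\dr f_i$ because its coefficient $\langle a_i,b_j\rangle$ vanishes on $S$ by the isotropy (2). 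Summing yields $\lb e_1,e_2\rb\an{S}\in\Gamma_S(D)$ locally, hence everywhere on $S$ since membership in $D$ is a local condition.

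The step I expect to be the crux is the anchor-tangency (1). In the forward direction it is the only assertion not immediate from the definition, and it hinges on the observation that closure must hold against \emph{every} extension $fw$, not merely the chosen generators. In the converse it is exactly (1) that both kills the non-tensorial correction terms $\rho^*\dr f$ in the Courant bracket and guarantees that sections vanishing on $S$ do not spoil closure; once it is available, the remaining manipulations are routine applications of the Leibniz rules, (CA3) and isotropy.
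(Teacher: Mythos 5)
Your proof is correct, and there is nothing in the paper to compare it against: the paper explicitly leaves this lemma's proof to the reader, so your argument must stand on its own, and it does. The forward direction correctly identifies the only non-immediate point, namely (1), and your derivation is the right one: testing closure against $fw$ with $f\an{S}=0$ and using the Leibniz rule $\lb e, fw\rb = f\lb e,w\rb + (\rho(e)f)w$ forces $(\rho(e)f)(s)w(s)\in D_s$ for every $w$, and since a Lagrangian subspace of a nondegenerate pairing is proper (in each fibre where $\mathsf E_s\neq 0$), this yields $\rho(e)(s)\in T_sS$; the standard characterisation of $T_sS$ by functions vanishing on the (embedded) submanifold $S$ finishes it. In the converse, your two preliminary facts are exactly what is needed: $\rho^*\dr h\an{S}\in\Gamma_S(D)$ for $h\an{S}=0$ uses $D_s^\perp=D_s$ correctly, and the vanishing of $\lb e',e_0\rb\an{S}$ for $e_0\an{S}=0$ (via a local frame and tangency of $\rho(e')$ to $S$, then (CA3) for the reversed order) correctly disposes of the remainder terms $r_1,r_2$. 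The bootstrap itself is complete: the identity $\lb fa,b\rb=f\lb a,b\rb-(\rho(b)f)a+\langle a,b\rangle\rho^*\dr f$ follows from (CA3) and the second-slot Leibniz rule as you use it, the terms retaining a factor $a_i\an{S}$ or $b_j\an{S}$ land in $\Gamma_S(D)$ with no tangency hypothesis needed, and the correction term dies because $\langle a_i,b_j\rangle\an{S}=0$ by isotropy; locality of both the bracket and membership in a subbundle justifies the local-to-global step. The only cosmetic slip is your reference to ``the second preliminary fact'' when discarding the $r$-terms, where you mean the fact about sections vanishing on $S$ rather than the one about $\rho^*\dr h$; the mathematics is unaffected.
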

\medskip

Next we recall the notion of Dorfman connection \cite{Jotz13a}.
  Let $(Q\to M,\rho_Q)$ be an anchored vector bundle.  
A
  \textbf{Dorfman ($Q$-)connection on $Q^*$} is an $\R$-linear map
 $ \Delta\colon \Gamma(Q)\to \operatorname{Der}(Q^*)$
such that 
\begin{enumerate}
\item $\Delta_q$ is a derivation over $\rho_Q(q)\in\mx(M)$,
\item $\Delta_{f q}\tau=f\Delta_q\tau+\langle q, \tau\rangle \cdot \rho_Q^*\dr f$ and
\item $\Delta_{q}\rho_Q^*\dr f=\rho_Q^*\dr(\rho_Q(q)f)$
\end{enumerate}
for all $f\in C^\infty(M)$, $q,q'\in\Gamma(Q)$, $\tau\in\Gamma(Q^*)$.

The map $\lb\cdot\,,\cdot\rb_\Delta=\Delta^*\colon
\Gamma(Q)\times\Gamma(Q)\to \Gamma(Q)$ that is dual to $\Delta$ in the
sense of dual derivations, i.e. $\langle \Delta^*_{q_1}q_2,
\tau\rangle=\rho_Q(q_1)\langle q_2,\tau\rangle -\langle q_2,
\Delta_{q_1}\tau\rangle$ for all $q_1,q_2\in \Gamma(Q)$ and
$\tau\in\Gamma(Q^*)$, is then a \emph{dull bracket} on $\Gamma(Q)$ in
the following sense.
%\end{remark} 
A \textbf{dull algebroid} is an anchored vector bundle $(Q\to M,
  \rho_Q)$ with a bracket $\lb\cdot\,,\cdot\rb $ on $\Gamma(Q)$ such that
\begin{equation}\label{anchor_preserves_bracket}
  \rho_Q\lb q_1, q_2\rb=[\rho_Q(q_1),\rho_Q(q_2)]
\end{equation}
and  (the Leibniz identity)
\begin{equation*} \lb f_1 q_1, f_2
  q_2\rb=f_1f_2\lb q_1,
  q_2\rb+f_1\rho_Q(q_1)(f_2)q_2-f_2\rho_Q(q_2)(f_1)q_1
\end{equation*}
for all $f_1,f_2\in C^\infty(M)$, $q_1, q_2\in\Gamma(Q)$.
In other words, a dull algebroid is a \textbf{Lie algebroid} if its
bracket is in addition skew-symmetric and satisfies the Jacobi
identity. Note that a dull bracket can easily be skew-symmetrised. 

The \emph{curvature} of a Dorfman connection $\Delta\colon
\Gamma(Q)\times\Gamma(Q^*)\to\Gamma(Q^*)$ is the map
\[R_\Delta\colon \Gamma(Q)\times\Gamma(Q)\to\Gamma(Q^*\otimes Q^*),\]
defined on $q,q'\in\Gamma(Q)$ by
$R_\Delta(q,q'):=\Delta_q\Delta_{q'}-\Delta_{q'}\Delta_q-\Delta_{\lb q,q'\rb_\Delta}$. The curvature satisfies
\begin{equation}\label{curv_dual_Jac}
\langle \tau, \Jac_{\lb\cdot\,,\cdot\rb_\Delta}(q_1,q_2,q_3)\rangle =\langle R_\Delta(q_1,q_2)\tau, q_3\rangle
\end{equation}
for $q_1,q_2,q_3\in\Gamma(Q)$ and $b\in\Gamma(B)$, where 
\[ \Jac_{\lb\cdot\,,\cdot\rb_\Delta}(q_1,q_2,q_3)=\lb \lb
q_1,q_2\rb_\Delta,q_3\rb_\Delta+\lb q_2,\lb q_1,q_3\rb_\Delta-\lb
q_1,\lb q_2,q_3\rb_\Delta\rb_\Delta
\]
is the Jacobiator of $\lb\cdot\,,\cdot\rb_\Delta$ in Leibniz
form. Hence, the Dorfman connection is flat if and only if the
corresponding dull bracket satisfies the Jacobi identity in Leibniz
form. 

\subsection{Split Lie 2-algebroids}\label{dorfman_eq_split}
A \textbf{homological} vector field $\chi$ on an $[n]$-manifold
$\mathcal M$ is a derivation of degree $1$ of $C^\infty(\mathcal M)$
such that $\mathcal Q^2=\frac{1}{2}[\mathcal Q,\mathcal Q]$ vanishes.
A homological vector field on a $[1]$-manifold $\mathcal M=E[-1]$ is
the de Rham differential $\dr_E$ associated to a Lie algebroid
structure on $E$ \cite{Vaintrob97}. A \textbf{Lie n-algebroid} is an
$[n]$-manifold endowed with a homological vector field (an
\emph{$\N\mathcal Q$-manifold} of degree $n$).

A \textbf{split Lie n-algebroid} is a split $[n]$-manifold endowed
with a homological vector field.  Split Lie n-algebroids were studied
by Sheng and Zhu \cite{ShZh17} and described as vector bundles endowed
with a bracket that satisfies the Jacobi identity up to some
correction terms, see also \cite{BoPo13}. Our definition of a split
Lie $2$-algebroid turns out to be a Lie algebroid version of Baez and
Crans' definition of a Lie $2$-algebra \cite{BaCr04}.
\begin{definition}\label{split_Lie2}
  A split Lie 2-algebroid $B^*\to Q$ is the pair of an
  anchored vector bundle \footnote{The names that we choose for the
    vector bundles will become natural in a moment.}  $(Q\to M,
  \rho_Q)$ and a vector bundle $B\to M$, together with a vector bundle
  map $l\colon B^*\to Q$, a skew-symmetric dull bracket\footnote{To
    get the definition in \cite{ShZh17}, set
    $l_1:=-l$, $l_3:=\omega$ and consider the skew symmetric bracket
    $l_2\colon \Gamma(Q\oplus B^*)\times\Gamma(Q\oplus B^*)\to
    \Gamma(Q\oplus B^*)$, $l_2((q_1,\beta_1),(q_2,\beta_2))=(\lb q_1,
    q_2\rb, \nabla_{q_1}^*\beta_2-\nabla_{q_2}^*\beta_1)$ for
    $q_1,q_2\in\Gamma(Q)$ and $\beta_1,\beta_2\in\Gamma(B^*)$. Note
    that this bracket satisfies a Leibniz identity with anchor
    $\rho_Q\circ \pr_Q\colon Q\oplus B^*\to TM$ and that the
    Jacobiator of this bracket is given by $\Jac_{l_2}((q_1,\beta_1),
    (q_2,\beta_2), (q_3,\beta_3)) =(-l(\omega(q_1,q_2,q_3)),
    \omega(q_1,q_2,l(\beta_3))+\rm{c.p.}$ } $\lb\cdot\,,\cdot\rb\colon
  \Gamma(Q)\times\Gamma(Q)\to\Gamma(Q)$, a linear connection
  $\nabla\colon\Gamma(Q)\times\Gamma(B)\to\Gamma(B)$ and a vector
  valued $3$-form $\omega\in \Omega^3(Q,B^*)$, such that
\begin{enumerate}
\item[(i)] $\nabla_{l(\beta_1)}^*\beta_2+\nabla_{l(\beta_2)}^*\beta_1=0$ for
  all $\beta_1,\beta_2\in\Gamma(B^*)$,
\item[(ii)] $\lb q, l(\beta)\rb=l(\nabla_q^*\beta)$ for
  $q\in\Gamma(Q)$ and $\beta\in\Gamma(B^*)$,
\item [(iii)]$\Jac_{\lb \cdot,\cdot\rb}=l\circ \omega\in \Omega^3(Q,Q)$,
\item[(iv)] $R_{\nabla}(q_1,q_2)b=l^*\langle \ip{q_2}\ip{q_1}\omega,b\rangle$
  for $q_1,q_2\in\Gamma(Q)$ and $b\in\Gamma(B)$, and 
\item[(v)] $\dr_{\nabla^*}\omega=0$.
\end{enumerate}
\end{definition}
From (iii) follows the identity $\rho_Q\circ l=0$.
In the following, we will also work with
$\partial_B:=l^*\colon Q^*\to B$, with the Dorfman connection
$\Delta\colon\Gamma(Q)\times\Gamma(Q^*)\to\Gamma(Q^*)$ that is dual to
$\lb\cdot\,,\cdot\rb$, and with
$R_\omega\in\Omega^2(Q,\operatorname{Hom}(B,Q^*))$ defined by
$R_\omega(q_1,q_2)b=\langle\ip{q_2}\ip{q_1}\omega,b\rangle$. Then (ii)
is equivalent to $\partial_B\circ \Delta_q=\nabla_q\circ \partial_B$
and (iii) is $R_\omega(q_1,q_2)\circ \partial_B=R_\Delta(q_1,q_2)$ for
$q, q_1, q_2\in\Gamma(Q)$.

\subsection{Split Lie-$2$-algebroids as split $[2]$Q-manifolds}\label{explicit_Q}
Before we go on with the study of examples, we briefly describe how to
construct from the objects in Definitions \ref{split_Lie2} the
corresponding homological vector fields on split $[2]$-manifolds.
Note that local descriptions of homological vector fields are also
given in \cite{ShZh17} and \cite{BoPo13}.

Consider a split $[2]$-manifold $\mathcal M=Q[-1]\oplus B^*[-2]$.
Assume that $Q$ is endowed with an anchor $\rho_Q$ and a
skew-symmetric dull bracket $\lb\cdot\,,\cdot\rb$, that it acts on $B$
via a linear connection $\nabla\colon
\Gamma(Q)\times\Gamma(B)\to\Gamma(B)$, that $\omega$ is an element of
$\Omega^3(Q,B^*)$ and that $\partial_B\colon Q^*\to B$ is a vector
bundle morphism.  Define a vector field $\mathcal Q$ of degree $1$ on
$\mathcal M$ by the following formulas:
\[\mathcal Q(f)=\rho_Q^*\dr f\in \Gamma(Q^*)
\]
for $f\in C^\infty(M)$, 
\[\mathcal Q(\tau)=\dr_Q\tau+\partial_B\tau \in \Omega^2(Q)\oplus\Gamma(B)
\]
for $\tau\in\Gamma(Q^*)$ 
and 
\[\mathcal Q(b)=\dr_\nabla b-\langle\omega,b\rangle\in \Omega^1(Q,B)\oplus\Omega^3(Q).
\]
for $b\in\Gamma(B)$.  Conversely, a relatively easy degree count and
study of the graded Leibniz identity for an arbitrary vector field of
degree $1$ on $\mathcal M=Q[-1]\oplus B^*[-2]$ shows that it must be
given as above, defining therefore an anchor $\rho_Q$, and the
structure objects $\lb\cdot\,,\cdot\rb$, $\nabla$, $\omega$ and
$\partial_B$.

We show that $\mathcal Q^2=0$ if and only if $(\partial_B^*\colon
B^*\to Q,\lb\cdot\,,\cdot\rb,\nabla,\omega)$ is a split Lie
$2$-algebroid anchored by $\rho_Q$.  For
$f\in C^\infty(M)$ we have \[ \mathcal Q^2(f)=\dr_Q(\rho_Q^*\dr
f)+\partial_B(\rho_Q^*\dr f)\in \Omega^2(Q)\oplus \Gamma(B).
\] Hence $\mathcal Q^2(f)=0$ for all $f\in C^\infty(M)$ if and only if
$\partial_B\circ\rho_Q^*=0$
and $\rho_Q\lb q_1,q_2\rb_\Delta=[\rho_Q(q_1), \rho_Q(q_2)]$ for all
$q_1,q_2\in\Gamma(Q)$. 
Now we assume that these two conditions are satisfied.
For $\tau\in\Gamma(Q^*)$ we have 
\[\mathcal
Q^2(\tau)=(\dr_Q^2\tau-\langle\omega,\partial_B\tau\rangle)+(\partial_B\dr_Q\tau+\dr_\nabla(\partial_B\tau))\in\Omega^3(Q)\oplus\Omega^1(Q,
B),
\]
where $\partial_B\colon \Omega^k(Q)\to\Omega^{k-1}(Q,B)$ is
the vector bundle morphism defined by
\[\partial_B(\tau_1\wedge\ldots\wedge\tau_k)=\sum_{i=1}^k(-1)^{i+1}\tau_1
\wedge\ldots \wedge\hat i\wedge \ldots\tau_k\wedge\partial_B\tau_i\]
for all $\tau_1,\tau_2\in\Gamma(Q^*)$. We find
$\dr_Q^2\tau(q_1,q_2,q_3)=\langle\operatorname{Jac}_{\lb\cdot\,,\cdot\rb}(q_1,q_2,q_3),
\tau\rangle$ and
$(\partial_B\dr_Q\tau)(q,\beta)=-\langle \partial_B\Delta_q\tau,\beta\rangle$,
and so 
$\mathcal Q^2(\tau)=0$ for all $\tau\in\Gamma(Q^*)$ if and only if 
$\operatorname{Jac}_{\lb\cdot\,,\cdot\rb}(q_1,q_2,q_3)=\partial_B^*\omega(q_1,q_2,q_3)$ for
all $q_1,q_2,q_3\in\Gamma(Q)$ and 
$\partial_B\Delta_q\tau=\nabla_q(\partial_B\tau)$ for all
$q\in\Gamma(Q)$ and $\tau\in\Gamma(Q^*)$.

Finally, we find for $b\in\Gamma(B)$:
\[\mathcal Q^2(b)=\mathcal Q(\dr_\nabla b)-\dr_Q\langle\omega,
b\rangle -\partial_B\langle\omega,
b\rangle.
\]
The term $\partial_B\langle\omega,
b\rangle$ is an element of $\Omega^2(Q,B)$ and the term $\dr_Q\langle\omega,
b\rangle $ is an element of $\Omega^4(Q)$.
A computation yields that the $\Omega^4(Q)$-term of $\mathcal Q(\dr_\nabla 
b)$ is
$-\langle\omega,\dr_\nabla b\rangle$, which is defined by
\[\langle\omega,\dr_\nabla
b\rangle(q_1,q_2,q_3,q_4)=\sum_{\sigma\in
  Z_4}(-1)^\sigma\langle\omega(q_{\sigma(1)},q_{\sigma(2)},q_{\sigma(3)}),\nabla_{q_{\sigma(4)}}b\rangle,
\]
where $Z_4$ is the group of cyclic permutations of $\{1,2,3,4\}$.  The
$\Omega^2(Q, B)$-term is $R_\nabla(\cdot,\cdot)b$ and the
$\Gamma(S^2B)$-term is $\nabla_{\partial_B^*}b$ defined by
$(\nabla_{\partial_B^*}b)(\beta_1,\beta_2)
=\langle\nabla_{\partial_B^*\beta_1}b,\beta_2\rangle+\langle\nabla_{\partial_B^*\beta_2}b,\beta_1\rangle$
for all $\beta_1,\beta_2\in\Gamma(B^*)$. Hence $\mathcal Q^2(b)=0$ if
and only if $\dr_Q\langle\omega, b\rangle+\langle\omega,\dr_\nabla
b\rangle=0$, which is equivalent to $\dr_{\nabla^*}\omega=0$;
$\nabla_{\partial_B^*}b=0$, which is equivalent to
$\nabla_{\partial_B^*\beta_1}^*\beta_2+\nabla_{\partial_B^*\beta_2}^*\beta_1=0$
for all $\beta_1,\beta_2\in\Gamma(B^*)$; and
$R_\nabla(\cdot,\cdot)b=\partial_B\langle\omega, b\rangle$, which is
equivalent to
$R_{\nabla^*}(q_1,q_2)\beta=\omega(q_1,q_2,\partial_B^*\beta)$ for all
$q_1,q_2\in\Gamma(Q)$ and $\beta\in\Gamma(B^*)$.

\subsection{Examples of split Lie 2-algebroids}\label{examples_split_lie_2}
We describe here four classes of examples of split Lie
2-algebroids. Later we will discuss their geometric meanings. We do
not verify in detail the axioms of
split Lie 2-algebroids. The computations in order to do this for
Examples \ref{standard} and \ref{adjoint} are long, but
straightforward. Note that, alternatively, the next section will
provide a geometric proof of the fact that the following objects are
split Lie $2$-algebroids, since we
will find them to be equivalent to special classes of VB-Courant
algebroids. Note finally that a fifth important class of examples in
discussed in Section \ref{double}.

\subsubsection{Lie algebroid representations} 
Let $(Q\to M,\rho,[\cdot\,,\cdot])$ be a Lie algebroid and
$\nabla\colon\Gamma(Q)\times\Gamma(B)\to\Gamma(B)$ a representation of
$Q$ on a vector bundle $B$.  Then $(0\colon B^*\to Q, [\cdot\,,\cdot],
\nabla, 0)$ is a split Lie 2-algebroid.  It is a semi-direct extension
of the Lie algebroid $Q$ (and a special case of the bicrossproduct Lie
2-algebroids defined in \S\ref{bicross}): the corresponding bracket $l_2$ is given by
$l_2(q_1+\beta_1,q_2+\beta_2)=[q_1,q_2]+(\nabla_{q_1}^*\beta_2-\nabla_{q_2}^*\beta_1)$
for $q_1,q_2\in\Gamma(Q)$ and $\beta_1,\beta_2\in\Gamma(B^*)$. Hence
$(Q\oplus B^*\to M, \rho=\rho_Q\circ\pr_Q, l_2)$ is simply a Lie
algebroid.

\subsubsection{Standard split Lie 2-algebroids}\label{standard}
Let $E\to M$ be a vector bundle, set $\partial_E=\pr_E\colon E\oplus
T^*M\to E$, consider a skew-symmetric dull bracket
$\lb\cdot\,,\cdot\rb$ on $\Gamma(TM\oplus E^*)$, with $TM\oplus E^*$
anchored by $\pr_{TM}$, and let $\Delta\colon \Gamma(TM\oplus
E^*)\times\Gamma(E\oplus T^*M)\to\Gamma(E\oplus T^*M)$ be the dual
Dorfman connection. This defines as follows a split Lie 2-algebroid
structure on the vector bundles $(TM\oplus E^*,\pr_{TM})$ and $E^*$.

Let $\nabla\colon\Gamma(TM\oplus E^*)\times\Gamma(E)\to\Gamma(E)$ be
the ordinary linear connection\footnote{To see that
  $\nabla=\pr_E\circ\Delta\circ\iota_E$ is an ordinary connection,
  recall that since $TM\oplus E^*$ is anchored by $\pr_{TM}$, the map
  $\dr_{E\oplus T^*M}=\pr_{TM}^*\dr\colon C^\infty(M)\to
  \Gamma(E\oplus T^*M)$ sends $f\to (0,\dr f)$.}  defined by
$\nabla=\pr_E\circ\Delta\circ\iota_E$.  The vector bundle map $l=\pr_E^*\colon
E^*\to TM\oplus E^*$ is just the canonical inclusion.
Define $\omega$ by
$\omega(v_1,v_2,v_3)=\Jac_{\lb\cdot\,,\cdot\rb}(v_1,v_2,v_3)$. 
Note that since $TM\oplus E^*$ is anchored by
$\pr_{TM}$, the tangent part of the dull bracket must just be the Lie
bracket of vector fields. The Jacobiator $\Jac_{\lb\cdot\,,\cdot\rb}$
can hence be seen as an element of $\Omega^3(TM\oplus E^*,
E^*)$.

A straightforward verification of the axioms shows that $l$,
$\lb\cdot\,,\cdot\rb$, $\nabla^*$, $\omega$ define a split Lie
2-algebroid. For reasons that will become clearer in
\S\ref{standard_VB_Courant_ex}, we call \emph{standard} this type of
split Lie 2-algebroid.

\subsubsection{Adjoint split Lie 2-algebroids}\label{adjoint}
The \emph{adjoint} split Lie 2-algebroids can be described as follows.  
Let $\mathsf E\to M$ be a Courant algebroid with anchor $\rho_{\mathsf
  E}$ and bracket $\lb\cdot\,,\cdot\rb$ and choose a metric linear
connection $\nabla\colon\mx(M)\times\Gamma(\mathsf E)\to\Gamma(\mathsf
E)$, i.e.~a linear connection that preserves the pairing. Set
$\partial_{TM}=\rho_{\mathsf E}\colon \mathsf E\to TM$ and identify
$\mathsf E$ with its dual via the pairing.  The map $\Delta\colon
\Gamma(\mathsf E)\times\Gamma(\mathsf E)\to\Gamma(\mathsf E)$,
\[\Delta_ee'=\lb e,e'\rb+\nabla_{\rho(e')}e\]
is a Dorfman connection, which we call the \emph{basic Dorfman connection associated to
$\nabla$}. The dual skew-symmetric (!) dull bracket is given by $
\lb e,e'\rb_\Delta=\lb e,e'\rb-\rho^*\langle\nabla_\cdot
e,e'\rangle $ for all $e,e'\in\Gamma(\mathsf E)$.  The map
$\nabla^{\rm bas}\colon\Gamma(\mathsf E)\times\mx(M)\to\mx(M)$,
$\nabla^{\rm bas}_eX=[\rho(e),X]+\rho(\nabla_{X}e)$
is a linear connection, the \emph{basic connection associated to $\nabla$}.

We now define the \emph{basic curvature} $R_\Delta^{\rm
  bas}\in\Omega^2(\mathsf E,\Hom(TM,\mathsf E))$ by\footnote{
  We have then $R_\Delta^{\rm bas}(e_1,e_2)X=
  -\nabla_X\lb e_1,e_2\rb_\Delta+\lb \nabla_Xe_1,e_2\rb_\Delta+\lb
  e_1,\nabla_Xe_2\rb_\Delta +\nabla_{\nabla^{\rm
      bas}_{e_2}X}e_1-\nabla_{\nabla^{\rm
      bas}_{e_1}X}e_2-\beta\inv\rho^*\langle
  R_\nabla(X,\cdot)e_1,e_2\rangle$. Using
  $-R_\nabla^*=R_{\nabla^*}=R_\nabla$ (where we identify $\mathsf E$
  with its dual using $\langle\cdot\,,\cdot\rangle$), the identity
  $R_\Delta^{\rm bas}(e_1,e_2)=-R_\Delta^{\rm bas}(e_2,e_1)$ is then
  immediate.}
\begin{align}
  R_\Delta^{\rm bas}(e_1,e_2)X=& -\nabla_X\lb e_1,e_2\rb+\lb
  \nabla_Xe_1,e_2\rb+\lb e_1,\nabla_Xe_2\rb \label{def_R_nabla_bas}\\
&+\nabla_{\nabla^{\rm
      bas}_{e_2}X}e_1-\nabla_{\nabla^{\rm
      bas}_{e_1}X}e_2-\beta\inv\langle \nabla_{\nabla^{\rm bas}_\cdot
    X}e_1,e_2\rangle\nonumber
\end{align}
for all $e_1,e_2\in\Gamma(\mathsf E)$ and $X\in\mx(M)$. 
Note the similarity of these constructions with the one of the adjoint
representation up to homotopy (see \cite{GrMe10a}). The meaning of
this similarity will become clear in \S\ref{tangent_Courant}.
The map $l$ is $ \rho_{\mathsf E}^*\colon T^*M\to\mathsf E$ and 
the form $\omega\in\Omega^3(\mathsf
E,T^*M)$ is given by $\omega(e_1,e_2,e_3)=\langle R_\Delta^{\rm
  bas}(e_1,e_2), e_3\rangle$. Note that it corresponds to the tensor $\Psi$
defined in \cite[Definition 4.1.2]{Li-Bland12} (the right-hand side of
\eqref{def_R_nabla_bas}).  The adjoint split
Lie 2-algebroids are exactly the \emph{split symplectic Lie
  2-algebroids}, and correspond hence to splittings of the tangent
doubles of Courant algebroids \cite{Jotz17c}.

\subsubsection{Split Lie 2-algebroid defined by a 2-representation}\label{semi-direct}
Let $(\partial_B\colon C\to B,\nabla,\nabla,R)$ be a representation up
to homotopy of a Lie algebroid $A$ on $B\oplus C$.  We anchor $A\oplus
C^*$ by $\rho_A\circ\pr_A$ and define $\Delta\colon\Gamma(A\oplus
C^*)\times\Gamma(C\oplus A^*)\to\Gamma(C\oplus A^*)$ by
\begin{equation*}
\Delta_{(a,\gamma)}(c,\alpha)=(\nabla_ac,\ldr{a}\alpha+\langle\nabla^*_\cdot\gamma,c\rangle),
\end{equation*}
$\nabla\colon\Gamma(A\oplus C^*)\times\Gamma(B)\to\Gamma(B)$ by $\nabla_{(a,\gamma)}b=\nabla_ab$.
The vector bundle map $l$ is here
$l=\iota_{C^*}\circ\partial_B^*$, where $\iota_{C^*}\colon C^*\to
A\oplus C^*$ is the canonical inclusion, and the dull bracket that is
dual to $\Delta$ is
given
by \[\lb (a_1,\gamma_1),(a_2,\gamma_2)\rb=([a_1,a_2],\nabla_{a_1}^*\gamma_2-\nabla_{a_2}^*\gamma_1)\]
for $a_1,a_2\in\Gamma(A)$, $\gamma_1,\gamma_2\in\Gamma(C^*)$. The tensor $\omega$ is given
by \[\omega((a_1,\gamma_1), (a_2,\gamma_2), (a_3,\gamma_3))=\langle
R(a_1,a_2), \gamma_3\rangle+\text{c.p.}
\]

Note that if we work with the dual $A$-representation up to
homotopy $(\partial_B^*\colon B^*\to C^*, \nabla^*,
\nabla^*, -R^*)$, then we get the Lie 2-algebroid defined in
\cite[Proposition 3.5]{ShZh17} 
as the semi-direct product of a 2-
representation and a Lie algebroid.  This is then also a special case
of the bicrossproduct of a matched pair of 2-representations (see \S
\ref{bicross}).  Later we will explain why the choice that we make
here is more natural.

\subsection{Morphisms of (split) Lie 2-algebroids}\label{mor_lie_2_alg}\label{morphism_Lie_2}
In this section we quickly discuss morphisms of split Lie
$2$-algebroids, see also \cite{BoPo13}.

 A morphism $\mu\colon (\mathcal M_1,\mathcal Q_1)\to (\mathcal
 M_2,\mathcal Q_2)$ of Lie 2-algebroids is a morphism $\mu\colon
  \mathcal M_1\to \mathcal M_2$ of the underlying $[2]$-manifolds,
  such that
\begin{equation}\label{morphism_of_Q}
\mu^\star\circ\mathcal Q_2=\mathcal Q_1\circ\mu^\star\colon
C^\infty(\mathcal M_2)\to C^\infty(\mathcal M_1).
\end{equation}

Assume that the two $[2]$-manifolds $\mathcal M_1$ and $\mathcal M_2$
are split $[2]$-manifold $\mathcal M_1=Q_1[-1]\oplus B_1^*[-2]$ and
$\mathcal M_2=Q_2[-1]\oplus B_2^*[-2]$.  Then the homological vector
fields $\mathcal Q_1$ and $\mathcal Q_2$ are defined as in
\S\ref{explicit_Q} with two split Lie $2$-algebroids; $(\rho_{1}\colon
Q_1\to TM_1,\partial_{1}\colon Q_1^*\to B_1, \lb\cdot\,,\cdot\rb_1,
\nabla^1, \omega_1)$ and $(\rho_{2}\colon Q_2\to
TM_2, \partial_{2}\colon Q_2^*\to B_2, \lb\cdot\,,\cdot\rb_2,
\nabla^2, \omega_2)$. Further, the morphism $\mu^\star\colon
C^\infty(\mathcal M_2)\to C^\infty(\mathcal M_1)$ over $\mu_0^*\colon
C^\infty(M_2)\to C^\infty(M_1)$ decomposes as $\mu_Q\colon Q_1\to
Q_2$, $\mu_B\colon B_1^*\to B_2^*$ and $\mu_{12}\colon \wedge^2 Q_1\to
B_2^*$, all morphisms over $\mu_0\colon M_1\to M_2$.  We study
\eqref{morphism_of_Q} in these decompositions.
\begin{enumerate}
\item The condition $\mu^\star(\mathcal Q_2(f))=\mathcal
  Q_1(\mu^\star(f))$ for all $f\in C^\infty(M_2)$ is 
  $\mu_Q^\star(\rho_2^*\dr f)=\rho_1^*\dr(\mu_0^*f)$ for all $f\in C^\infty(M_2)$, which is
  equivalent to
\[T_m\mu_0(\rho_1(q_m))=\rho_2(\mu_Q(q_m))
\]
for all $q_m\in Q_1$. In other words
$\mu_Q\colon Q_1\to Q_2$ over $\mu_0\colon M_1\to M_2$ is compatible 
with the anchors $\rho_1\colon Q_1\to TM_1$ and $\rho_2\colon Q_2\to TM_2$.
\item The condition $\mu^\star(\mathcal Q_2(\tau))=\mathcal
  Q_1(\mu^\star(\tau))$ for all $\tau\in \Gamma(Q_2^*)$ reads
\[\mu^\star(\dr_2\tau+\partial_2\tau)=\partial_1(\mu_Q^\star\tau)+\dr_1(\mu_Q^\star\tau)\]
for all $\tau\in \Gamma(Q_2^*)$. The left-hand side is 
\[\underset{\in
  \Omega^2(Q_1)}{\underbrace{\mu_Q^\star(\dr_2\tau)+\mu_{12}^\star(\partial_2\tau)}}+\underset{\in
  \Gamma(B_1)}{\underbrace{\mu_B^\star(\partial_2\tau)}}\]
and the right-hand side is 
\[\partial_1(\mu_Q^\star\tau)+\dr_1(\mu_Q^\star\tau)\in
\Gamma(B_1)\oplus\Omega^2(Q_1).
\]
Hence, $\mu^\star\circ\mathcal Q_2=\mathcal Q_1\circ\mu^\star$ on
degree $1$ functions if and only if
$\mu_Q\circ\partial_1^*=\partial_2^*\circ\mu_B$ and
$\mu_Q^\star(\dr_2\tau)+\mu_{12}^\star(\partial_2\tau)=\dr_1(\mu_Q^\star\tau)$
for all $\tau\in\Gamma(Q_2^*)$.  
\item Finally we find that $\mu^\star(\mathcal Q_2(b))=\mathcal
  Q_1(\mu^\star(b))$ for all $b\in\Gamma(B_2)$ if and only if
\[\mu^\star(\dr_{\nabla^2}b)=\dr_{\nabla^1}(\mu_B^\star(b))+\partial_1(\mu_{12}^\star(b))
\,\in \Omega^1(Q_1,B_1)
\]
for all $b\in\Gamma(B_2)$ and
\[\mu_Q^\star\omega_2=\mu_B\circ \omega_1-\dr_{\mu_0^\star\nabla^2}\mu_{12}\, \in
\Omega^3(Q_1,\mu_0^*B_2^*).
\]

\end{enumerate}

In the equalities above we have used the following constructions.  The
form $\mu^\star(\dr_{\nabla^2}b)\in\Omega^1(Q_1,B_1)$ is defined by
$(\mu^\star(\dr_{\nabla^2}b))(q_m)={\mu_{B}}_m^*(\nabla^2_{\mu_Q(q_m)}b)\in
B_1(m)$ for all $q_m\in Q_1$.  Recall that $\mu_{12}$ can be seen as
an element of $\Omega^2(Q_1,\mu_0^*B_2^*)$.  The tensors
$\mu_Q^\star\omega_{2}\in\Omega^2(Q_1,\mu_0^*B_2^*)$ and $\mu_B\circ
\omega_{1}\in\Omega^2(Q_1,\mu_0^*B_2^*)$ can be defined as follows:
\[(\mu_Q^\star\omega_{2})(q_1(m),q_2(m),q_3(m))=\omega_{2}(\mu_Q(q_1(m)),\mu_Q(q_2(m)),\mu_Q(q_3(m)))
\]
in $B_2^*(\mu_0(m))$, and 
\[(\mu_B\circ \omega_{1})(q_1(m),q_2(m),q_3(m))=\mu_B(\omega_{1})(q_1(m),q_2(m),q_3(m)))\in B_2^*(\mu_0(m))
\]
for all $q_1,q_2,q_3\in\Gamma(Q_1)$. The linear connection
$\mu_Q^\star\nabla^2\colon\Gamma(Q_1)\times\Gamma(\mu_0^*B_2^*)\to\Gamma(\mu_0^*B_2^*)$
is defined by
\[
(\mu_Q^\star\nabla^2)_q(\mu_0^!\beta)(m)={\nabla^2}^*_{\mu_Q(q(m))}\beta\in
B_2^*(\mu_0(m))
\]
for all $q\in\Gamma(Q_1)$ and $\beta\in\Gamma(B_2^*)$.

  We call a triple $(\mu_Q,\mu_B,\mu_{12})$ over $\mu_0$ as above a
  \textbf{morphism of split Lie $2$-algebroids}.
In particular, if $\mathcal M_1=\mathcal M_2$, $\mu_0=\Id_M\colon M\to
M$, $\mu_Q=\Id_Q\colon Q\to Q$ and $\mu_B=\Id_{B^*}\colon B^*\to B^*$,
then $\mu_{12}\in\Omega^2(Q, B^*)$ is just a change
of splitting. The five conditions above simplify to
\begin{enumerate}
\item The dull brackets are related by $\lb q, q'\rb_{2}=\lb q,
  q'\rb_1 +\partial_B^*\mu_{12}(q,q')$.
\item The connections are related by
  $\nabla^2_qb=\nabla^1_qb-\partial_B\langle\mu_{12}(q,\cdot),b\rangle$.
\item The curvature terms are related by
  $\omega_{1}-\omega_{2}=\dr_{1,\nabla^2}\mu_{12}$.
\end{enumerate}
The operator $\dr_{1,\nabla^2}\colon\Omega^\bullet(Q,
B^*)\to\Omega^{\bullet+1}(Q, B^*)$ is defined by the dull bracket
$\lb\cdot\,,\cdot\rb_1$ and the connection ${\nabla^2}^*$.

\section{VB-Courant algebroids and Lie 2-algebroids}\label{sec:VB_cour}
In this section we describe and prove in detail the equivalence
between VB-Courant algebroids and Lie 2-algebroids. In short, a
homological vector field on a $[2]$-manifold defines an anchor and a
Courant bracket on the corresponding metric double vector bundle. This
Courant bracket and this anchor are automatically compatible with the
metric and define so a linear Courant algebroid structure on the
double vector bundle.  Note that a correspondence of Lie 2-algebroids
and VB-Courant algebroids has already been discussed by Li-Bland
\cite{Li-Bland12}.  Our goal is to make this result constructive by
deducing it from the results in \cite{Jotz17a} and presenting it as
the counterpart of the main result in \cite{GrMe10a}, and to
illustrate it with several (partly new) examples.

\subsection{Definition and observations}
We will work with the following definition of a VB-Courant
algebroid, which is due to Li-Bland \cite{Li-Bland12}.
\begin{definition}
A VB-Courant algebroid is a metric double vector bundle 
\[\begin{xy}
\xymatrix{\mathbb{E}\ar[r]^{\pi_B}\ar[d]_{\pi_Q}&B\ar[d]^{q_B}\\
Q\ar[r]_{q_Q}&M
}
\end{xy}
\]
with core $Q^*$ such that $\mathbb E\to B$ is a Courant algebroid and
the following conditions are satisfied.
\begin{enumerate}
\item The anchor map $\Theta\colon \mathbb E\to TB$ is linear. That is, 
\begin{equation}
\begin{xy}
  \xymatrix{\mathbb{E}\ar[rr]^{\pi_B}\ar[dd]_{\pi_Q}&&B\ar[dd]^{q_B}&
    & &TB\ar[rr]^{p_B}\ar[dd]_{Tq_B}&&B\ar[dd]^{q_B}\\
    &C\ar[dr]&   &\ar[r]^\Theta& &&B\ar[dr]&\\
    Q\ar[rr]_{q_Q}&&M&&&TM\ar[rr]_{p_M}&&M }
\end{xy}
\end{equation}
is a morphism of double vector bundles.
\item The Courant bracket is linear. That is 
\begin{align*}
  \left\lb\Gamma^l_B(\mathbb{E}),\Gamma^l_B(\mathbb{E})\right\rb&\subseteq\Gamma^l_B(\mathbb{E}),\quad 
  \left\lb\Gamma^l_B(\mathbb{E}),\Gamma^c_B(\mathbb{E})\right\rb\subseteq\Gamma^c_B(\mathbb{E}),\quad\left\lb\Gamma^c_B(\mathbb{E}),\Gamma^c_B(\mathbb{E})\right\rb=0.
\end{align*}
\end{enumerate}
\end{definition}

We make the following observations. Let $\rho_Q\colon Q\to TM$ be the
side map of the anchor, i.e.~if $\pi_Q(\chi)=q$ for $\chi\in \mathbb
E$, then $Tq_B(\Theta(\chi))=\rho_Q(q)$. In other words, if $\chi
\in\Gamma^l_B(\mathbb E) $ is linear over $q\in\Gamma(Q)$ then
$\Theta(\chi)$ is linear over $\rho_Q(q).$ Let $\partial_B\colon Q^*\to
B$ be the core map defined as follows by the anchor $\Theta$:
\begin{equation}\label{anchor_on_pullbacks}
\Theta(\sigma^\dagger)=(\partial_B\sigma)^\uparrow
\end{equation}
for all $\sigma\in\Gamma(Q^*)$. In the following, we call $\rho_Q$ the
\emph{side-anchor} and $\partial_B$ the \emph{core-anchor}.  The
operator $\mathcal D=\Theta^*\dr\colon C^\infty(B)\to \Gamma_B(\mathbb
E)$ satisfies $\mathcal D(q_B^*f)=(\rho_Q^*\dr f)^\dagger$ for all
$f\in C^\infty(M)$ and Lemma \ref{roytenberg_useful} yields
immediately
\begin{equation}\label{rho_delta}
\partial_B\circ \rho_Q^*=0,\quad  \text{ which is equivalent to } \quad \rho_Q\circ \partial_B^*=0.
\end{equation}
Recall that if $\chi \in\Gamma^l_B(\mathbb E) $ is linear over
$q\in\Gamma(Q)$, then $\langle \chi, \tau^\dagger\rangle=q_B^*\langle
q, \tau\rangle$ for all $\tau\in\Gamma(Q^*)$.

\subsection{The fat Courant algebroid}
Here we denote by $\widehat{\mathbb E}\to M$ the fat bundle, that is
the vector bundle whose sheaf of sections is the sheaf of
$C^\infty(M)$-modules $\Gamma^l_B(\mathbb E)$, the linear sections of
$\mathbb E$ over $B$.  Gracia-Saz and Mehta show in \cite{GrMe10a}
that if $\mathbb E$ is endowed with a linear Lie algebroid structure
over $B$, then $\widehat{\mathbb E}\to M$ inherits a Lie algebroid
structure, which is called the ``fat Lie algebroid''. For
completeness, we describe here quickly the counterpart of this in the
case of a linear Courant algebroid structure on $\mathbb E\to B$.

Note that the restriction of the pairing on $\mathbb E$ to linear
sections of $\mathbb E$ defines a nondegenerate pairing on
$\widehat{\mathbb E}$ with values in $B^*$.  Since the Courant bracket
of linear sections is again linear, we get the following theorem.
\begin{theorem}\label{fat} Let $(\mathbb E, B,Q,M)$ be a VB-Courant algebroid.
  Then $\widehat{\mathbb E}$ is a Courant algebroid with
  pairing in $B^*$.
\end{theorem}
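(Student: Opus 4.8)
The plan is to realise every piece of data required of a Courant algebroid with pairing in $B^*$ as the restriction to linear sections of the corresponding structure on the Courant algebroid $\mathbb E\to B$, and then to obtain the axioms (CA1)--(CA4) for $\widehat{\mathbb E}$ by simply restricting those of $\mathbb E$ and translating linear functions on $B$ into sections of $B^*$. The heart of the argument is therefore a dictionary between objects over $B$ and objects over $M$, rather than any new computation.

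First I would fix the structure maps on $\widehat{\mathbb E}$. The $B^*$-valued pairing is the restriction of the metric: for $\chi_1,\chi_2\in\Gamma^l_B(\mathbb E)=\Gamma(\widehat{\mathbb E})$ the function $\langle\chi_1,\chi_2\rangle_{\mathbb E}$ is linear on $B$, hence equal to $\ell_{\langle\chi_1,\chi_2\rangle}$ for a unique $\langle\chi_1,\chi_2\rangle\in\Gamma(B^*)$; its nondegeneracy is the observation recorded just before the theorem, and can be read off in a Lagrangian splitting. The anchor is $\rho=\rho_Q\circ\pr$, where $\pr\colon\widehat{\mathbb E}\to Q$ sends a linear section to its base section. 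For the derivation map I use that, $\Theta$ being linear, $\Theta(\chi)$ is a linear vector field on $B$ whenever $\chi$ is linear; by the correspondence \eqref{ableitungen} this determines a derivation $\tilde\rho(\chi)$ of $\Gamma(B^*)$ with $\Theta(\chi)(\ell_\beta)=\ell_{\tilde\rho(\chi)\beta}$ for all $\beta\in\Gamma(B^*)$. Since $\Theta(\chi)$ covers $\rho_Q(q)$ when $\chi$ is linear over $q$, the symbol of $\tilde\rho(\chi)$ is $\rho_Q(q)=\rho(\chi)$, and the $C^\infty(M)$-linearity of $\chi\mapsto\tilde\rho(\chi)$ follows from that of $\Theta$.

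The one point that needs genuine verification, and the main obstacle, is that the operator $\mathcal D_B$ of $\mathbb E\to B$ sends linear functions to linear sections, so that the symmetrisation axiom (CA3) closes up inside $\widehat{\mathbb E}$. For $\beta\in\Gamma(B^*)$ I would compute the pairings of $\mathcal D_B\ell_\beta$ against a spanning set: against a linear section $\chi$ one gets $\Theta(\chi)(\ell_\beta)=\ell_{\tilde\rho(\chi)\beta}$, a linear function, while against a core section $\tau^\dagger$ one gets, using \eqref{anchor_on_pullbacks} and the action of a vertical lift on a linear function, $\Theta(\tau^\dagger)(\ell_\beta)=(\partial_B\tau)^\uparrow(\ell_\beta)=q_B^*\langle\partial_B\tau,\beta\rangle=q_B^*\langle\tau,\partial_B^*\beta\rangle$. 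Comparing with the defining axioms of a metric double vector bundle, these are precisely the pairings of the linear section over $\partial_B^*\beta$; so by nondegeneracy $\mathcal D_B\ell_\beta$ \emph{is} that linear section. This both shows $\mathcal D_B\ell_\beta\in\Gamma(\widehat{\mathbb E})$ and identifies it with the intrinsic operator $\mathcal D$ determined by $\langle\mathcal D\beta,\chi\rangle=\tilde\rho(\chi)\beta$.

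Finally I would deduce (CA1)--(CA4). The bracket on $\widehat{\mathbb E}$ is the Courant bracket of $\mathbb E$ restricted to linear sections, which is again linear by hypothesis, so (CA1) is immediate as the Jacobi identity of $\mathbb E$ evaluated on linear sections. For (CA2) and (CA3) I evaluate the respective identities of $\mathbb E$ on linear sections and rewrite each linear function $\ell_{(\cdot)}$ on $B$ as its section of $B^*$ via the dictionary above, using that $\mathcal D_B\ell_\beta$ is linear, to land in $\Gamma(B^*)$ and $\Gamma(\widehat{\mathbb E})$ respectively. For (CA4), the equality $\Theta\lb\chi_1,\chi_2\rb=[\Theta(\chi_1),\Theta(\chi_2)]$ of linear vector fields on $B$ translates, under \eqref{ableitungen} and the compatibility of brackets of linear vector fields with commutators of derivations, into $\tilde\rho\lb\chi_1,\chi_2\rb=[\tilde\rho(\chi_1),\tilde\rho(\chi_2)]$. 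The $C^\infty(M)$-Leibniz rule with anchor $\rho=\rho_Q\circ\pr$ is then a formal consequence of (CA2), exactly as noted in the excerpt, which completes the verification that $\widehat{\mathbb E}$ is a Courant algebroid with pairing in $B^*$.
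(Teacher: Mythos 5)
Your proposal is correct and takes essentially the same route as the paper, which obtains this theorem simply by observing that the pairing of linear sections is a linear function (hence a section of $B^*$) and that the Courant bracket of $\mathbb E\to B$ restricts to linear sections, all axioms of a Courant algebroid with pairing in $B^*$ then being inherited from those of $\mathbb E$. The one point you rightly single out for genuine verification --- that $\mathcal D_B\ell_\beta$ is again a linear section, identified with the intrinsic operator $\mathcal D$ of the $B^*$-valued structure --- is precisely the content of the paper's Lemma \ref{formula_for_Dl}, established there by the same computation of pairings against core and linear sections.
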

Note that in \cite{JoKi16} we explain how the Courant algebroid with
pairing in $E^*$ that is obtained from the VB-Courant algebroid
$TE\oplus T^*E$, for a vector bundle $E$, is equivalent to the
omni-Lie algebroids described in \cite{ChLi10,ChLiSh11}.

We will come back in Corollary \ref{fat_CA} to the structure found in
Theorem \ref{fat}.  Recall that for
$\phi\in\Gamma(\operatorname{Hom}(B,Q^*))$, the core-linear section
$\widetilde{\phi}$ of $\mathbb E\to B$ is defined by
$\widetilde{\phi}(b_m)=0_{b_m}+_B\overline{\phi(b_m)}$.  Note that
$\widehat{\mathbb E}$ is also naturally paired with $Q^*$: $\langle
\chi(m), \sigma(m)\rangle=\langle \pi_Q(\chi(m)), \sigma(m)\rangle$
for all $\chi\in\Gamma^l_B(\mathbb E)=\Gamma(\widehat{\mathbb E})$ and
$\sigma\in\Gamma(Q^*)$.  This pairing is degenerate since it restricts
to $0$ on $\operatorname{Hom}(B,Q^*)\times_MQ^*$.  The following
proposition can easily be proved.
\begin{proposition}
\begin{enumerate}
\item The map 
$\Delta\colon \Gamma(\widehat{\mathbb E})\times \Gamma(Q^*)\to
\Gamma(Q^*)$ defined by $\left(\Delta_\chi \tau\right)^\dagger=\lb \chi,
\tau^\dagger\rb$
is a flat Dorfman connection, where $\widehat{\mathbb E}$ is endowed
with the anchor $\rho_Q\circ \pi_Q$ and paired with $Q^*$ as above.
\item The map
$\nabla\colon \Gamma(\widehat{\mathbb E})\times \Gamma(B)\to
\Gamma(B)$ 
defined by 
$\Theta(\chi)=\widehat{\nabla_\chi}\in\mx^l(B)$ is a flat connection.
\end{enumerate}
$\Delta$ and $\nabla$ satisfy
\begin{equation*}
\partial_B\circ \Delta=\nabla\circ \partial_B \quad\text{ and }\quad \left\lb \chi, \widetilde{\phi}\right\rb_{\widehat{\mathbb E}}=\widetilde{\Delta_\chi\circ \phi-\phi\circ
  \nabla_\chi}
\end{equation*}
for $\chi\in\Gamma(\widehat{\mathbb E})$ and
$\phi\in\Gamma(\operatorname{Hom}(B,Q^*))$.
\end{proposition}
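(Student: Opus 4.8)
The plan is to read every clause of the Proposition off the Courant algebroid axioms (CA1)--(CA4) for $\mathbb E\to B$, using throughout the dictionary between linear/core sections and the structure maps. Concretely, I would use: a linear section $\chi$ over $q=\pi_Q(\chi)$ has anchor $\Theta(\chi)=\widehat{\nabla_\chi}$, the linear vector field of a derivation over $\rho_Q(q)$; core sections satisfy $\Theta(\tau^\dagger)=(\partial_B\tau)^\uparrow$ by \eqref{anchor_on_pullbacks}; the linear/vertical identities $\widehat D(\ell_\varepsilon)=\ell_{D^*\varepsilon}$, $\widehat D(q_B^*f)=q_B^*(Xf)$, $q_B^*f\cdot\widehat D=\widehat{fD}$ and $[\widehat D,e^\uparrow]=(De)^\uparrow$ from Example \ref{tangent_double} and \eqref{Lie_bracket_VF}; and $\mathcal D(q_B^*f)=(\rho_Q^*\dr f)^\dagger$. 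Since $\lb\Gamma^l_B(\mathbb E),\Gamma^c_B(\mathbb E)\rb\subseteq\Gamma^c_B(\mathbb E)$, the bracket $\lb\chi,\tau^\dagger\rb$ is a core section, so $\Delta$ is well defined by $\lb\chi,\tau^\dagger\rb=(\Delta_\chi\tau)^\dagger$; likewise $\Theta(\chi)$ is a linear vector field, defining $\nabla_\chi$.

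For part (1), the derivation property of $\Delta_\chi$ over $\rho_Q(q)$ follows from the Leibniz rule $\lb\chi, q_B^*f\cdot\tau^\dagger\rb = q_B^*f\,\lb\chi,\tau^\dagger\rb + \Theta(\chi)(q_B^*f)\,\tau^\dagger$ (a consequence of (CA2)) together with $\Theta(\chi)(q_B^*f)=q_B^*(\rho_Q(q)f)$. The Leibniz rule in $\chi$ and the extra term $\langle\pi_Q\chi,\tau\rangle\,\rho_Q^*\dr f$ come from expanding $\lb q_B^*f\cdot\chi,\tau^\dagger\rb$ via (CA3): the skew term $\lb\tau^\dagger,\chi\rb$ is controlled because $\Theta(\tau^\dagger)$ is vertical and annihilates pullbacks, while $\mathcal D(q_B^*(f\langle q,\tau\rangle))$ is evaluated by the formula for $\mathcal D$. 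Axiom (3) of a Dorfman connection, $\Delta_\chi\rho_Q^*\dr f=\rho_Q^*\dr(\rho_Q(q)f)$, is Lemma \ref{roytenberg_useful} applied to $\lb\chi, \mathcal D(q_B^*f)\rb$. Flatness is the cleanest point: $(R_\Delta(\chi_1,\chi_2)\tau)^\dagger$ equals $\lb\chi_1,\lb\chi_2,\tau^\dagger\rb\rb-\lb\chi_2,\lb\chi_1,\tau^\dagger\rb\rb-\lb\lb\chi_1,\chi_2\rb,\tau^\dagger\rb$, which vanishes by (CA1).

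For part (2), $C^\infty(M)$-linearity $\nabla_{f\chi}=f\nabla_\chi$ follows from linearity of $\Theta$ as a double vector bundle morphism together with $q_B^*f\cdot\widehat D=\widehat{fD}$, and $\nabla_\chi$ is automatically a derivation over $\rho_Q(q)$, so $\nabla$ is a connection for the anchored bundle $(\widehat{\mathbb E},\rho_Q\circ\pi_Q)$; flatness $\nabla_{\lb\chi_1,\chi_2\rb}=[\nabla_{\chi_1},\nabla_{\chi_2}]$ is exactly (CA4), $\Theta\lb\chi_1,\chi_2\rb=[\Theta(\chi_1),\Theta(\chi_2)]$, read through $\widehat{(\cdot)}$. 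The first compatibility $\partial_B\circ\Delta=\nabla\circ\partial_B$ is again (CA4): applying $\Theta$ to $\lb\chi,\tau^\dagger\rb=(\Delta_\chi\tau)^\dagger$ produces $(\partial_B\Delta_\chi\tau)^\uparrow$ on the left and $[\widehat{\nabla_\chi},(\partial_B\tau)^\uparrow]=(\nabla_\chi\partial_B\tau)^\uparrow$ on the right.

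The remaining identity for the core-linear bracket is where the real work lies, and I expect it to be the main obstacle. First one checks that the right-hand side makes sense: $\Delta_\chi\circ\phi-\phi\circ\nabla_\chi$ is $C^\infty(M)$-linear in its $B$-argument (the two symbol terms in $\rho_Q(q)$ cancel), hence a genuine element of $\Gamma(\Hom(B,Q^*))$. Next, using (CA2) and the metric axioms $\langle\widetilde\phi,\tau^\dagger\rangle=q_B^*\langle\pi_Q\widetilde\phi,\tau\rangle=0$ and $\langle\lb\chi,\widetilde\phi\rb,\tau^\dagger\rangle=q_B^*\langle\pi_Q\lb\chi,\widetilde\phi\rb,\tau\rangle$, one shows $\pi_Q\lb\chi,\widetilde\phi\rb=0$, i.e.\ $\lb\chi,\widetilde\phi\rb$ is core-linear, $\lb\chi,\widetilde\phi\rb=\widetilde\psi$. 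The obstruction is that core-linear sections are annihilated by the $\R$-valued pairing against all core sections, so $\psi$ cannot be recovered this way; one must instead test against linear sections, using the $B^*$-valued pairing of the fat Courant algebroid (Theorem \ref{fat}) together with the relation $\tilde\rho(\chi)=\nabla_\chi^*$ on $\Gamma(B^*)$ in (CA2). I expect the cleanest bookkeeping to come from fixing a Lagrangian splitting, writing $\chi=\sigma_Q(q)+\widetilde{\phi_\chi}$, and reducing to the two cases $\lb\sigma_Q(q),\widetilde\phi\rb$ and $\lb\widetilde{\phi_1},\widetilde{\phi_2}\rb$; matching the resulting $\Hom(B,Q^*)$-component with $\Delta_\chi\circ\phi-\phi\circ\nabla_\chi$, with the correct composition order and sign, is the delicate step.
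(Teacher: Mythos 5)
Your treatment of parts (1) and (2) and of the first compatibility is correct, and where it overlaps with what the paper writes out it is the same argument: the paper proves $\partial_B\circ\Delta=\nabla\circ\partial_B$ exactly as you do, by applying $\Theta$ to $\lb\chi,\tau^\dagger\rb=(\Delta_\chi\tau)^\dagger$ and using (CA4) together with $[\widehat{\nabla_\chi},(\partial_B\tau)^\uparrow]=(\nabla_\chi\partial_B\tau)^\uparrow$. The paper dismisses part (1) as ``easy''; your fillings-in are sound — the module axiom for $\Delta_{f\chi}$ via (CA3) and verticality of $\Theta(\tau^\dagger)$, axiom (3) via Lemma \ref{roytenberg_useful} applied to $\lb\chi,\mathcal D(q_B^*f)\rb$, and flatness of $\Delta$ and $\nabla$ from (CA1) and (CA4) respectively.

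The genuine gap is the last displayed identity, which is precisely where your text stops being a proof: you announce that matching the $\operatorname{Hom}(B,Q^*)$-component ``with the correct composition order and sign'' is the delicate step, and you never carry it out. Moreover you overestimate the difficulty because you miss the device the paper uses: write $\widetilde{\phi}=\sum_{i=1}^n\ell_{\beta_i}\cdot\tau_i^\dagger$ with $\beta_i\in\Gamma(B^*)$, $\tau_i\in\Gamma(Q^*)$. A single Leibniz expansion then gives
\begin{equation*}
\left\lb \chi,\widetilde{\phi}\right\rb
=\sum_{i=1}^n\ell_{\beta_i}\left\lb\chi,\tau_i^\dagger\right\rb+\Theta(\chi)(\ell_{\beta_i})\,\tau_i^\dagger
=\sum_{i=1}^n\ell_{\beta_i}(\Delta_\chi\tau_i)^\dagger+\ell_{\nabla_\chi^*\beta_i}\,\tau_i^\dagger,
\end{equation*}
using \eqref{ableitungen}, and the corresponding element of $\Gamma(\operatorname{Hom}(B,Q^*))$ evaluated on $b\in\Gamma(B)$ is
$\sum_i\langle\beta_i,b\rangle\Delta_\chi\tau_i+\bigl(\rho_Q(q)\langle\beta_i,b\rangle-\langle\beta_i,\nabla_\chi b\rangle\bigr)\tau_i
=\Delta_\chi(\phi(b))-\phi(\nabla_\chi b)$,
since $\Delta_\chi$ is a derivation over $\rho_Q(q)$, $q=\pi_Q(\chi)$. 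No Lagrangian splitting and no pairing against linear sections is needed, and core-linearity of $\lb\chi,\widetilde{\phi}\rb$ falls out of the computation rather than having to be checked beforehand.

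If you do insist on your splitting route, there is a concrete sign trap of exactly the kind you feared: in the convention \eqref{ableitungen}, the vector field $\ell_\beta\cdot e^\uparrow$ is the linear vector field of the derivation $-\beta\otimes e$, so for $\chi=\sigma_Q(q)+\widetilde{\phi_\chi}$ one gets $\nabla_\chi=\nabla_q-\partial_B\circ\phi_\chi$ and, by Lemma \ref{formulas}(1), $\Delta_\chi=\Delta_q-\phi_\chi\circ\partial_B$; with these signs your two cases $\lb\sigma_Q(q),\widetilde{\phi}\rb=\widetilde{\lozenge_q\phi}$ and Lemma \ref{formulas}(2) recombine to the stated formula, whereas the naive positive signs produce a discrepancy of $2\,\phi\circ\partial_B\circ\phi_\chi$. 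Note also that the case $\lb\sigma_Q(q),\widetilde{\phi}\rb$ is itself proved (in the appendix) by the very same $\sum_i\ell_{\beta_i}\tau_i^\dagger$ expansion, so the splitting detour buys nothing over the direct computation.
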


\begin{proof} The first part of the proposition is easy to prove.
 Choose $\chi\in\Gamma^l_B(\mathbb E)$ and $\tau\in\Gamma(Q^*)$. Then 
\begin{equation*}
  (\partial_B\circ\Delta_\chi\tau)^\uparrow=
\Theta(\Delta_\chi\tau^\dagger)=\Theta\left(\left\lb \chi,\tau^\dagger\right\rb\right)
  =\left[\Theta(\chi), (\partial_B\tau)^\uparrow\right]=(\nabla_\chi(\partial_B\tau))^\uparrow.
\end{equation*}
 The second equation is easy to check by writing $\widetilde{\phi}=\sum_{i=1}^n\ell_{\beta_i}\cdot\tau_i^\dagger$ 
with $\beta_i\in\Gamma(B^*)$ and $\tau_i\in\Gamma(Q^*)$.
\end{proof}

\begin{lemma}\label{formulas}
  For $\phi,\psi\in\Gamma(\operatorname{Hom}(B,Q^*))$ and
  $\tau\in\Gamma(Q^*)$, we have
\begin{enumerate}
\item $\left\lb \tau^\dagger, \widetilde{\phi}\right\rb
  =(\phi(\partial_B \tau))^\dagger= -\left\lb \widetilde{\phi},
    \tau^\dagger\right\rb$ and
\item $\left\lb \widetilde{\phi},
    \widetilde{\psi}\right\rb=\widetilde{\psi\circ\partial_B\circ\phi-\phi\circ\partial_B\circ\psi}
  $.
\end{enumerate}
\end{lemma}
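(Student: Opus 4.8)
The plan is to reduce both formulas to the local decomposition $\widetilde\phi=\sum_i\ell_{\beta_i}\tau_i^\dagger$ recorded in the proof of the previous proposition, where $\beta_i\in\Gamma(B^*)$ and $\tau_i\in\Gamma(Q^*)$ encode $\phi=\sum_i\beta_i\otimes\tau_i\in\Gamma(B^*\otimes Q^*)=\Gamma(\operatorname{Hom}(B,Q^*))$, and then to exploit the two structural identities $\Theta(\tau^\dagger)=(\partial_B\tau)^\uparrow$ and $b^\uparrow(\ell_\beta)=q_B^*\langle b,\beta\rangle$, together with the Courant axioms (CA2), (CA3) and the linearity constraint $\lb\Gamma^c_B(\mathbb E),\Gamma^c_B(\mathbb E)\rb=0$. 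Throughout I will use $q_B^*f\cdot\tau^\dagger=(f\tau)^\dagger$ to turn pullback-weighted core sections back into core sections, and the identification $\ell_\delta\rho^\dagger=\widetilde{\delta\otimes\rho}$ of weighted core sections with core-linear sections.

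For the first identity I would expand the bracket in its second argument by the Leibniz rule $\lb e_1,fe_2\rb=f\lb e_1,e_2\rb+(\Theta(e_1)f)e_2$ coming from (CA2):
\[\lb\tau^\dagger,\widetilde\phi\rb=\sum_i\ell_{\beta_i}\lb\tau^\dagger,\tau_i^\dagger\rb+\sum_i\bigl(\Theta(\tau^\dagger)\ell_{\beta_i}\bigr)\tau_i^\dagger.\]
The first sum vanishes because core sections bracket to zero, and $\Theta(\tau^\dagger)=(\partial_B\tau)^\uparrow$ gives $\Theta(\tau^\dagger)\ell_{\beta_i}=q_B^*\langle\partial_B\tau,\beta_i\rangle$, so the remaining sum collapses to $\bigl(\sum_i\langle\beta_i,\partial_B\tau\rangle\tau_i\bigr)^\dagger=(\phi(\partial_B\tau))^\dagger$, as claimed. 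The sign identity $(\phi(\partial_B\tau))^\dagger=-\lb\widetilde\phi,\tau^\dagger\rb$ then follows from (CA3): since $\widetilde\phi$ is a linear section over $0\in\Gamma(Q)$, property (2) of a metric double vector bundle forces $\langle\tau^\dagger,\widetilde\phi\rangle=0$, hence $\mathcal D\langle\tau^\dagger,\widetilde\phi\rangle=0$ and $\lb\tau^\dagger,\widetilde\phi\rb=-\lb\widetilde\phi,\tau^\dagger\rb$.

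For the second identity I would again expand in the second slot, writing $\widetilde\psi=\sum_j\ell_{\gamma_j}\sigma_j^\dagger$ with $\psi=\sum_j\gamma_j\otimes\sigma_j$, to obtain
\[\lb\widetilde\phi,\widetilde\psi\rb=\sum_j\ell_{\gamma_j}\lb\widetilde\phi,\sigma_j^\dagger\rb+\sum_j\bigl(\Theta(\widetilde\phi)\ell_{\gamma_j}\bigr)\sigma_j^\dagger.\]
Two inputs feed this. First, the mixed bracket $\lb\widetilde\phi,\sigma_j^\dagger\rb=-(\phi(\partial_B\sigma_j))^\dagger$ is furnished by part (1). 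Second, because $\Theta$ is $C^\infty(B)$-linear as a bundle map over $B$, the anchor of a core-linear section is $\Theta(\widetilde\phi)=\sum_i\ell_{\beta_i}(\partial_B\tau_i)^\uparrow$, whence $\Theta(\widetilde\phi)\ell_{\gamma_j}=\sum_i q_B^*\langle\partial_B\tau_i,\gamma_j\rangle\ell_{\beta_i}$. Substituting and using $\ell_\delta\rho^\dagger=\widetilde{\delta\otimes\rho}$, the first sum reassembles via $b\mapsto\sum_{i,j}\langle\gamma_j,b\rangle\langle\beta_i,\partial_B\sigma_j\rangle\tau_i$ into $-\widetilde{\phi\circ\partial_B\circ\psi}$, while the second sum reassembles via $b\mapsto\sum_{i,j}\langle\beta_i,b\rangle\langle\gamma_j,\partial_B\tau_i\rangle\sigma_j$ into $\widetilde{\psi\circ\partial_B\circ\phi}$, and their difference is exactly $\widetilde{\psi\circ\partial_B\circ\phi-\phi\circ\partial_B\circ\psi}$.

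The Leibniz expansions are routine; the one step requiring care is the index bookkeeping in part (2), namely verifying that the two double sums genuinely recombine into the composites $\psi\circ\partial_B\circ\phi$ and $\phi\circ\partial_B\circ\psi$ in the correct order and with the correct sign. This reduces to tracking which of $\beta_i,\gamma_j$ is paired against the free argument $b$ and which against $\partial_B(\cdot)$, and is the main (though entirely elementary) obstacle; everything else is forced by the VB-Courant axioms and the dictionary $\ell_\delta\rho^\dagger=\widetilde{\delta\otimes\rho}$.
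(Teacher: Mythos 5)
Your proposal is correct and takes essentially the same approach as the paper: both decompose $\widetilde{\phi}=\sum_i\ell_{\beta_i}\tau_i^\dagger$ and reduce everything to the Leibniz rule from (CA2), the vanishing of brackets of core sections, the anchor formula $\Theta(\tau^\dagger)=(\partial_B\tau)^\uparrow$ with $(\partial_B\tau)^\uparrow(\ell_\beta)=q_B^*\langle\partial_B\tau,\beta\rangle$, and (CA3) combined with $\langle\tau^\dagger,\widetilde{\phi}\rangle=0$ for the sign in (1). The only cosmetic difference is that in (2) you expand a single slot and feed in part (1) together with $\Theta(\widetilde{\phi})=\sum_i\ell_{\beta_i}(\partial_B\tau_i)^\uparrow$, whereas the paper expands both slots at once into the double sum; the resulting computation is the same.
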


\begin{remark}
  Note that the second equality is the induced Lie algebra bundle
  structure induced on $\operatorname{Hom}(B,Q^*)$ by $\partial_B$.
\end{remark}

\begin{proof}[Proof of Lemma \ref{formulas}] We write $\phi=\sum_{i=1}^n\beta_i\otimes \tau_i$ and
  $\psi=\sum_{j=1}^n\beta'_j\otimes\tau_j$ with
  $\beta_1,\ldots,\beta_n,\beta'_1,\ldots,\beta'_n\in\Gamma(B^*)$ and
  $\tau_1,\ldots,\tau_n\in\Gamma(Q^*)$.  Hence, we have
  $\widetilde{\phi}=\sum_{i=1}^n\ell_{\beta_i}\tau_i^\dagger$ and
  $\widetilde{\psi}=\sum_{j=1}^n\ell_{\beta'_j}\tau_j^\dagger$.  First we
  compute \begin{equation*}
\begin{split}
\left\lb \tau^\dagger,
    \sum_{i=1}^n\ell_{\beta_i}\tau_i^\dagger\right\rb=\sum_{i=1}^n(\partial_B\tau)^\uparrow(\ell_{\beta_i})\tau_i^\dagger
=\sum_{i=1}^nq_B^*\langle \partial_B\tau, \beta_i\rangle
  \tau_i^\dagger=\left(\sum_{i=1}^n \langle \partial_B\tau,
    \beta_i\rangle \tau_i\right)^\dagger
\end{split}
\end{equation*}
and we get (1).  Since $\langle
\tau^\dagger,\widetilde{\phi}\rangle=0$, the second equality follows.
Then we have 
\begin{align*}
  \left\lb \sum_{i=1}^n\ell_{\beta_i}\tau_i^\dagger,
    \sum_{j=1}^n\ell_{\beta'_j}\tau_j^\dagger\right\rb
  &=\sum_{i=1}^n\sum_{j=1}^n\ell_{\beta_i}(\partial_B\tau_i)^\uparrow(\ell_{\beta'_j})\tau_j^\dagger-\ell_{\beta'_j}(\partial_B\tau_j)^\uparrow(\ell_{\beta_i})\tau_i^\dagger\\
  &=\left(\sum_{i=1}^n\sum_{j=1}^n\langle \partial_B\tau_i,
    \beta'_j\rangle\cdot\beta_i\cdot\tau_j-\langle\partial_B\tau_j,\beta_i\rangle\cdot\beta'_j\cdot\tau_i
  \right)^\dagger,
\end{align*}
which leads to (2).
\end{proof}

\subsection{Lagrangian decompositions of VB-Courant algebroids}
In this section, we study in detail the structure of VB-Courant
algebroids, using Lagrangian decompositions of the underlying metric double
vector bundle.  Our goal is the following theorem. Note the similarity
of this result with Theorem \ref{rajan} \cite{GrMe10a} in the VB-algebroid case.
\begin{theorem}\label{main}
  Let $(\mathbb E; Q,B;M)$ be a VB-Courant algebroid and choose a
  Lagrangian splitting $\Sigma\colon Q\times_MB\to \mathbb E$. Then
  there is a split Lie $2$-algebroid structure
  $(\rho_Q,l=\partial_B^*,\lb\cdot\,,\cdot\rb,\nabla, \omega)$ on
  $Q\oplus B^*$ such that
\begin{equation}\label{VB_def1}
\begin{split}
  \Theta(\sigma_Q(q))&=\widehat{\nabla_q}\in\mx(B), \quad  \lb
  \sigma_Q(q), \tau^\dagger\rb=(\Delta_q\tau)^\dagger \text{ and }\\
  \lb \sigma_Q(q_1), \sigma_Q(q_2)\rb &=\sigma_Q\lb q_1,
    q_2\rb-\widetilde{R_\omega(q_1,q_2)},\\
\end{split}
\end{equation}
for all $q,q_1,q_2\in\Gamma(Q)$ and $\tau\in\Gamma(Q^*)$, where
$\Delta\colon\Gamma(Q)\times\Gamma(Q^*)\to\Gamma(Q^*)$ is
the Dorfman connection that is dual to the dull bracket.

Conversely, a Lagrangian splitting $\Sigma\colon Q\times B^*\to
\mathbb E$ of the metric double vector bundle $\mathbb E$ together
with a split Lie $2$-algebroid on $Q\oplus B^*$ define by
\eqref{VB_def1} a linear Courant algebroid structure on $\mathbb E$.
\end{theorem}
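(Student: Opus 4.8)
The plan is to mirror the proof of Theorem \ref{rajan}: read the split Lie $2$-algebroid data off the linear Courant structure evaluated on a spanning family of sections, match the Courant axioms (CA1)--(CA4) against the conditions (i)--(v) of Definition \ref{split_Lie2}, and for the converse prescribe the structure on generators by \eqref{VB_def1} and appeal to Lemma \ref{useful_lemma}. Throughout I would use that the linear sections $\sigma_Q(q)$ together with the core sections $\tau^\dagger$ generate $\Gamma_B(\mathbb E)$ as a $C^\infty(B)$-module, so that it suffices to work on these.

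For the forward implication I would first extract the data. As $\Theta(\sigma_Q(q))$ is a linear vector field over $\rho_Q(q)$, it corresponds to a derivation of $\Gamma(B)$ (Example \ref{tangent_double}) and defines $\nabla$ through the first equation of \eqref{VB_def1}, while the core-anchor of \eqref{anchor_on_pullbacks} gives $l=\partial_B^*$ with $\rho_Q\circ l=0$ by \eqref{rho_delta}. Linearity of the bracket makes $\lb\sigma_Q(q),\tau^\dagger\rb$ a core section, defining the Dorfman connection $\Delta$ by the second equation of \eqref{VB_def1}, and makes $\lb\sigma_Q(q_1),\sigma_Q(q_2)\rb$ a linear section whose $\pi_Q$-part defines the dull bracket $\lb q_1,q_2\rb$ and whose core-linear defect defines $\omega$ through $R_\omega$; a short (CA2)-computation confirms that $\Delta$ is dual to this dull bracket, (CA4) gives $\rho_Q\lb q_1,q_2\rb=[\rho_Q(q_1),\rho_Q(q_2)]$, and (CA3) with $\langle\sigma_Q(q_1),\sigma_Q(q_2)\rangle=0$ forces skewness. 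Of the remaining axioms, (ii) is the first identity of the proposition preceding Lemma \ref{formulas}; (iv) follows from the flatness of that same Dorfman connection on the fat bundle once $\lb\sigma_Q(q_1),\sigma_Q(q_2)\rb$ is decomposed and $\Delta_{\widetilde{R_\omega(q_1,q_2)}}$ is evaluated by Lemma \ref{formulas}(1); and (i) follows from the isotropy $\langle\mathcal D\ell_{\beta_1},\mathcal D\ell_{\beta_2}\rangle=0$ (a consequence of $\rho\circ\mathcal D=0$, Lemma \ref{roytenberg_useful}) after writing $\mathcal D\ell_\beta=\sigma_Q(l\beta)+\widetilde{\psi_\beta}$ with $\langle\sigma_Q(q),\widetilde{\psi_\beta}\rangle=\ell_{\nabla_q^*\beta}$.

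I expect the main obstacle to be the pair (iii) and (v), both of which should emerge from the vanishing of the Courant--Jacobiator (CA1) on three linear sections $\sigma_Q(q_1),\sigma_Q(q_2),\sigma_Q(q_3)$. Expanding it through \eqref{VB_def1} is delicate: reordering a term $\lb\widetilde{R_\omega(q_i,q_j)},\sigma_Q(q_k)\rb$ by (CA3) produces $\mathcal D$ of the linear function $\langle\sigma_Q(q_k),\widetilde{R_\omega(q_i,q_j)}\rangle$, and the $\sigma_Q(l\,\cdot)$-component of this $\mathcal D\ell$ feeds back into the $\pi_Q$-projection --- this is precisely what reconciles $\Jac_{\lb\cdot\,,\cdot\rb}\neq 0$ with $\Jac_{\mathbb E}=0$. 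Projecting the vanishing Jacobiator onto $Q$ and using \eqref{curv_dual_Jac} should yield (iii), $\Jac_{\lb\cdot\,,\cdot\rb}=l\circ\omega$, while its core-linear component, after inserting (ii) and the definition of $R_\omega$, should collapse to the Bianchi-type identity $\dr_{\nabla^*}\omega=0$, which is (v). Keeping track of these $\mathcal D$-contributions and their signs is where the real work lies.

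For the converse I would define $\Theta\colon\mathbb E\to TB$ as the linear bundle map with $\Theta(\sigma_Q(q))=\widehat{\nabla_q}$ and $\Theta(\tau^\dagger)=(\partial_B\tau)^\uparrow$, and prescribe $\lb\cdot\,,\cdot\rb$ on the generating set $\mathcal S=\{\sigma_Q(q)\}\cup\{\tau^\dagger\}$ by \eqref{VB_def1}, together with $\lb\tau_1^\dagger,\tau_2^\dagger\rb=0$ and the value $\lb\tau^\dagger,\sigma_Q(q)\rb=(\rho_Q^*\dr\langle q,\tau\rangle-\Delta_q\tau)^\dagger$ forced by (CA3). Then I would check the four hypotheses of Lemma \ref{useful_lemma} on $\mathcal S$: (2) and (4) reduce to the duality between $\Delta$ and the dull bracket and to the dull-algebroid identity, whereas (1) and (3) are exactly the reverse of the computations yielding (iii)--(v); together with $\rho\circ\rho^*=0$, i.e. $\rho_Q\circ l=0$, Lemma \ref{useful_lemma} then extends $\lb\cdot\,,\cdot\rb$ uniquely to a Courant algebroid structure on $\mathbb E\to B$. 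Finally, $\Theta$ is linear by construction and the prescribed brackets give $\lb\Gamma^l_B(\mathbb E),\Gamma^l_B(\mathbb E)\rb\subseteq\Gamma^l_B(\mathbb E)$, $\lb\Gamma^l_B(\mathbb E),\Gamma^c_B(\mathbb E)\rb\subseteq\Gamma^c_B(\mathbb E)$ and $\lb\Gamma^c_B(\mathbb E),\Gamma^c_B(\mathbb E)\rb=0$, so the result is a VB-Courant algebroid.
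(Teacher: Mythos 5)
Your overall strategy is the paper's own: extract $(\nabla,\Delta,\lb\cdot\,,\cdot\rb,R)$ from the splitting, translate (CA1)--(CA4) on the spanning family $\{\sigma_Q(q)\}\cup\{\tau^\dagger\}$ into conditions (i)--(v), and run Lemma \ref{useful_lemma} for the converse; your decomposition $\mathcal D\ell_\beta=\sigma_Q(l\beta)+\widetilde{\psi_\beta}$ is exactly Lemma \ref{formula_for_Dl}, and your description of the triple-linear Jacobiator (the $\mathcal D$-term produced by (CA3) feeding its $\sigma_Q$-component back into the $\pi_Q$-projection) is precisely the computation in Proposition \ref{Jacobi}. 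However, there is one concrete mis-step: the argument you offer for axiom (iv) actually proves axiom (iii). Flatness of the fat Dorfman connection is (CA1) applied to $\sigma_Q(q_1),\sigma_Q(q_2),\tau^\dagger$, and together with Lemma \ref{formulas}(1), in the form $\lb\widetilde{R_\omega(q_1,q_2)},\tau^\dagger\rb=-\left(R_\omega(q_1,q_2)(\partial_B\tau)\right)^\dagger$, it yields $R_\Delta(q_1,q_2)=R_\omega(q_1,q_2)\circ\partial_B$ --- which by \eqref{curv_dual_Jac} is exactly (iii), $\Jac_{\lb\cdot\,,\cdot\rb}=l\circ\omega$, a condition you then re-derive from the $Q$-projection of the triple-linear Jacobiator. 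Axiom (iv) concerns $R_\nabla$, the curvature of the connection on $B$, and cannot come from any bracket involving a core section: it is the content of (CA4), $\Theta\lb\sigma_Q(q_1),\sigma_Q(q_2)\rb=\left[\Theta(\sigma_Q(q_1)),\Theta(\sigma_Q(q_2))\right]$, evaluated on fibrewise linear functions $\ell_\beta$ (equivalently, flatness of the fat connection $\nabla$ from the same proposition, minding the transpose conventions for $\Theta$ of a core-linear section). In your proposal (CA4) is exploited only on pullback functions, where it gives the base compatibility $\rho_Q\lb q_1,q_2\rb=[\rho_Q(q_1),\rho_Q(q_2)]$, so (iv) --- Proposition \ref{comp}(1) in the paper --- is never established.

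A second, smaller omission: for "the core-linear defect defines $\omega$ through $R_\omega$" to make sense, $R$ must be totally antisymmetric, and skewness in the \emph{last} two arguments, $R(q_1,q_2)^*q_3=-R(q_1,q_3)^*q_2$, is not automatic. It is obtained from (CA2) applied to \emph{three} lifted sections, using the Lagrangian property $\langle\sigma_Q(q_i),\sigma_Q(q_j)\rangle=0$ (Proposition \ref{CA2}(3)); your use of (CA2) only on $(\sigma_Q(q_1),\sigma_Q(q_2),\tau^\dagger)$ establishes the duality of $\Delta$ with the dull bracket, and (CA3) only gives skewness in the first two slots (Proposition \ref{CA3}), so without the extra computation the element $\omega\in\Omega^3(Q,B^*)$ is not defined and (v) does not typecheck. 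Both gaps are repairable within your framework --- they require no new idea beyond evaluating (CA4) and (CA2) on the remaining combinations of sections --- and with them filled your proof coincides with the paper's (Propositions \ref{anchor}, \ref{comp}, \ref{CA3}, \ref{CA2}, \ref{Jacobi} combined with Lemma \ref{useful_lemma}).
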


First we will construct the objects $\lb\cdot\,,\cdot\rb_\Delta,
\Delta, \nabla, R$ as in the theorem, and then we will prove in the appendix that they
satisfy the axioms of a split Lie 2-algebroid.

\subsubsection{Construction of the split Lie 2-algebroid}\label{construction_of_objects}
First recall that, by definition, the Courant bracket of two linear
sections of $\mathbb E\to B$ is again linear. Hence, we can denote by
$\lb q_1, q_2\rb$ the section of $Q$ such that
\begin{equation}
  \pi_Q\circ \lb \sigma_Q(q_1), \sigma_Q(q_2)\rb =\lb q_1, q_2\rb\circ
  q_B.
\end{equation}

Since for each $q\in\Gamma(Q)$, the anchor $\Theta(\sigma_Q(q))$ is a linear vector field on
  $B$ over $\rho_Q(q) \in\mx(M)$, there
  exists a derivation $\nabla_q\colon\Gamma(B)\to\Gamma(B)$ over
  $\rho_Q(q) $ such that $\Theta(\sigma_Q(q))=\widehat{\nabla_q}\in\mx^l(B)$.  This defines a linear $Q$-connection
  $\nabla\colon\Gamma(Q)\times\Gamma(B)\to\Gamma(B)$. 
For $q\in\Gamma(Q)$ and $\tau\in\Gamma(Q^*)$, the bracket
  $\left\lb \sigma_Q(q), \tau^\dagger\right\rb$ is a core section. It
  is easy to check that the map
  $\Delta\colon\Gamma(Q)\times\Gamma(Q^*)\to\Gamma(Q^*)$ defined by
  \[ \left\lb \sigma_Q(q),
    \tau^\dagger\right\rb=(\Delta_q\tau)^\dagger
\]
is a Dorfman connection.\footnote{ Note that Condition $(C3)$ then implies that 
$\lb \tau^\dagger, \sigma_Q(q)\rb=\left(-\Delta_q\tau+\rho_Q^*\dr\langle \tau,
  q\rangle\right)^\dagger$.}

The difference of the two linear sections $\lb \sigma_Q(q_1),
\sigma_Q(q_2)\rb-\sigma_Q(\lb q_1, q_2\rb_\sigma)$ is again a linear
section, which projects to $0$ under $\pi_Q$.  Hence, there exists a
vector bundle morphism $R(q_1,q_2)\colon B\to Q^*$ such that
$\sigma_Q(\lb q_1, q_2\rb_\sigma)-\lb \sigma_Q(q_1),
\sigma_Q(q_2)\rb=\widetilde{R(q_1,q_2)}$.  This defines a map
$R\colon\Gamma(Q)\times\Gamma(Q)\to\Gamma(\operatorname{Hom}(B,Q^*))$.
We show in the appendix that $R$ defines a $3$-form
$\omega\in\Omega^3(Q,B^*)$ by $R=R_\omega$, that $(l=\partial_B^*,
\lb\cdot\,,\cdot\rb,\nabla, \omega)$ is a split Lie $2$-algebroid, and
that $\lb\cdot\,,\cdot\rb$ is dual to $\Delta$.

Conversely, choose a Lagrangian splitting $\Sigma\colon Q\times_MB$ of
a metric double vector bundle $(\mathbb E, Q; B, M)$ with core $Q^*$
and let $\mathcal S\subseteq\Gamma_B(\mathbb E)$ be the subset
$\{\tau^\dagger\mid \tau\in\Gamma(Q^*)\}\cup \{\sigma_Q(q)\mid
q\in\Gamma(Q)\}\subseteq \Gamma(\mathbb E)$. Choose a split Lie
$2$-algebroid $(l,\lb\cdot\,,\cdot\rb, \nabla, \omega)$ on $Q\oplus
B^*$ with an anchor $\rho_Q$ on $Q$. Consider the Dorfman connection
$\Delta$ that is dual to the dull bracket. Define then a vector bundle
map $\Theta\colon\mathbb E\to TB$ over the identity on $B$ by
$\Theta(\sigma_Q(q))=\widehat{\nabla_q}$ and
$\Theta(\tau^\dagger)=(l^*\tau)^\dagger$ and a bracket
$\lb\cdot\,,\cdot\rb$ on $\mathcal S$ by
\begin{equation*}
\begin{split}
  \lb \sigma_Q(q_1), \sigma_Q(q_2)\rb=\sigma_Q\lb q_1,
    q_2\rb-\widetilde{R_\omega(q_1,q_2)}, \quad \lb \sigma_Q( q),
  \tau^\dagger\rb=(\Delta_q\tau)^\dagger, \\\quad \lb
  \tau^\dagger, \sigma_Q(q)\rb=\left(-\Delta_q\tau+\rho_Q^*\dr\langle
    \tau, q\rangle\right)^\dagger, \quad \lb \tau_1^\dagger,
  \tau_2^\dagger\rb=0.
\end{split}
\end{equation*}
We show in the appendix that this bracket, the pairing and the anchor satisfy the
conditions of Lemma \ref{useful_lemma}, and so that $(\mathbb E, B; Q, M)$ with this structure 
is a VB-Courant algebroid.

\subsubsection{Change of Lagrangian decomposition}
Next we study how the split Lie 2-algebroid $(\partial_B^*\colon B^*
\to Q, \nabla, \lb\cdot\,,\cdot\rb, \omega)$ associated to a Lagrangian decomposition
of a VB-Courant algebroid changes when the Lagrangian decomposition
changes.

The proof of the following proposition is straightforward and left to
the reader. Compare this result with the equations at the end of \S\ref{morphism_Lie_2}, that describe a change of splittings
of a Lie $2$-algebroid.
 
\begin{proposition}\label{change_of_lift}
  Let $\Sigma^1,\Sigma^2\colon B\times_MQ\to\mathbb E$ be two
  Lagrangian splittings and let $\phi\in\Gamma(Q^*\otimes Q^*\otimes
  B^*)$ be the change of lift.
\begin{enumerate}
\item The Dorfman connections are related by
$\Delta_q^2\tau=\Delta_q^1\tau-\phi(q)(\partial_B\tau)$
\item and the dull brackets consequently by $\lb q, q'\rb_{2}=\lb q,
  q'\rb_1 +\partial_B^*\phi(q)^*(q')$.  
\item The connections are related by
  $\nabla^2_q=\nabla^1_q-\partial_B\circ\phi(q)$.
\item The curvature terms are related by
  $\omega_{1}-\omega_{2}=\dr_{{\nabla^2}^*}\phi$, where the
  operator $\dr_{{\nabla^2}^*}$ is defined with the dull bracket
  $\lb\cdot\,,\cdot\rb_1$ on $\Gamma(Q)$. 
\end{enumerate}
\end{proposition}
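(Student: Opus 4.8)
The plan is to carry everything out at the level of horizontal lifts, where the change of Lagrangian splitting reads $\sigma_Q^2(q)=\sigma_Q^1(q)+\widetilde{\phi(q)}$ for all $q\in\Gamma(Q)$, with $\phi(q)\in\Gamma(\Hom(B,Q^*))$ the contraction of $\phi$ in its first $Q^*$-slot with $q$. Each of the four identities is obtained by substituting this relation into the defining equations \eqref{VB_def1} and using $\R$-bilinearity of the Courant bracket. For (1) I would expand $\lb\sigma_Q^2(q),\tau^\dagger\rb=\lb\sigma_Q^1(q),\tau^\dagger\rb+\lb\widetilde{\phi(q)},\tau^\dagger\rb$; Lemma \ref{formulas}(1) identifies the second summand as $-(\phi(q)(\partial_B\tau))^\dagger$, and comparing cores gives $\Delta^2_q\tau=\Delta^1_q\tau-\phi(q)(\partial_B\tau)$. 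Identity (2) is then formal: $\rho_Q$ is the side-anchor and does not depend on the splitting, so dualising (1) through $\langle\lb q_1,q_2\rb,\tau\rangle=\rho_Q(q_1)\langle q_2,\tau\rangle-\langle q_2,\Delta_{q_1}\tau\rangle$ produces the extra term $\langle q_2,\phi(q_1)(\partial_B\tau)\rangle=\langle\partial_B^*\phi(q_1)^*(q_2),\tau\rangle$, which is exactly (2).

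For (3) I would apply the anchor $\Theta$ to $\sigma_Q^2(q)=\sigma_Q^1(q)+\widetilde{\phi(q)}$. Since $\Theta$ is a morphism of double vector bundles it is $C^\infty(B)$-linear, so writing $\widetilde{\phi(q)}=\sum_i\ell_{\beta_i}\tau_i^\dagger$ and using $\Theta(\tau_i^\dagger)=(\partial_B\tau_i)^\uparrow$ from \eqref{anchor_on_pullbacks} gives $\Theta(\widetilde{\phi(q)})=\sum_i\ell_{\beta_i}(\partial_B\tau_i)^\uparrow$. The correspondence \eqref{ableitungen} identifies $\ell_\beta e^\uparrow$ with the linear vector field $\widehat{-\beta\otimes e}$ of the endomorphism $b\mapsto-\langle\beta,b\rangle e$, whence $\Theta(\widetilde{\phi(q)})=\widehat{-\partial_B\circ\phi(q)}$ and $\widehat{\nabla^2_q}=\widehat{\nabla^1_q}+\widehat{-\partial_B\circ\phi(q)}$, i.e. (3). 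The delicate point here is the sign, which comes from the dual derivation appearing in \eqref{ableitungen}.

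The substantial part is (4). The plan is to expand, by $\R$-bilinearity,
\[
\lb\sigma_Q^2(q_1),\sigma_Q^2(q_2)\rb=\lb\sigma_Q^1(q_1),\sigma_Q^1(q_2)\rb+\lb\sigma_Q^1(q_1),\widetilde{\phi(q_2)}\rb+\lb\widetilde{\phi(q_1)},\sigma_Q^1(q_2)\rb+\lb\widetilde{\phi(q_1)},\widetilde{\phi(q_2)}\rb,
\]
and to compare with $\sigma_Q^2\lb q_1,q_2\rb_2-\widetilde{R_{\omega_2}(q_1,q_2)}$. The first term equals $\sigma_Q^1\lb q_1,q_2\rb_1-\widetilde{R_{\omega_1}(q_1,q_2)}$. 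The two mixed terms I would evaluate with the fat-bundle identity $\lb\chi,\widetilde{\psi}\rb=\widetilde{\Delta_\chi\circ\psi-\psi\circ\nabla_\chi}$ (with $\chi=\sigma_Q^1(q_i)$, so that $\Delta_\chi=\Delta^1_{q_i}$, $\nabla_\chi=\nabla^1_{q_i}$), reversing the order in $\lb\widetilde{\phi(q_1)},\sigma_Q^1(q_2)\rb$ by axiom (CA3) at the cost of a term $\mathcal D\langle\widetilde{\phi(q_1)},\sigma_Q^1(q_2)\rangle=\mathcal D(\ell_{\phi(q_1)^*(q_2)})$; the last term is given by Lemma \ref{formulas}(2). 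On the other side I would rewrite $\sigma_Q^2\lb q_1,q_2\rb_2$ using (2) and the change of splitting as $\sigma_Q^1\lb q_1,q_2\rb_1+\sigma_Q^1(\partial_B^*\phi(q_1)^*(q_2))+\widetilde{\phi(\lb q_1,q_2\rb_2)}$. The genuine horizontal lift here must cancel the horizontal part of $\mathcal D(\ell_{\phi(q_1)^*(q_2)})$, which by pairing $\mathcal D(\ell_\beta)$ against $\tau^\dagger$ and $\sigma_Q^1(q)$ one identifies as $\sigma_Q^1(\partial_B^*\beta)$ plus a core-linear correction. After this cancellation both sides are core-linear, and equating them reads off $R_{\omega_1}(q_1,q_2)-R_{\omega_2}(q_1,q_2)$.

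The main obstacle is the concluding bookkeeping. One must translate between the $\Hom(B,Q^*)$-valued tensor $R_\omega$ and the $B^*$-valued form $\omega$ through $R_\omega(q_1,q_2)b=\langle\ip{q_2}\ip{q_1}\omega,b\rangle$, and then recognise the collected core-linear expression as $(\dr_{{\nabla^2}^*}\phi)(q_1,q_2,\cdot)$, where $\dr_{{\nabla^2}^*}$ is the Koszul differential built from the dull bracket $\lb\cdot\,,\cdot\rb_1$ and the connection ${\nabla^2}^*$. Two points demand care. First, the connection in the answer is ${\nabla^2}^*$ and not ${\nabla^1}^*$: the difference $\nabla^1-\nabla^2=\partial_B\circ\phi$ from (3) is exactly the quadratic-in-$\phi$ discrepancy produced by Lemma \ref{formulas}(2), by the core-linear correction of the $\mathcal D$-term, and by the difference between $\widetilde{\phi(\lb q_1,q_2\rb_2)}$ and $\widetilde{\phi(\lb q_1,q_2\rb_1)}$, so these terms conspire to upgrade ${\nabla^1}^*$ to ${\nabla^2}^*$. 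Second, the skew-symmetry of $\phi$ in its two $Q^*$-arguments, guaranteed because both splittings are Lagrangian, is what lets the six resulting terms assemble into the alternating Koszul formula for $\dr_{{\nabla^2}^*}\phi$.
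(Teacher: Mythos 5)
Your proposal is correct, and it is exactly the verification the paper intends: the paper states only that the proof of Proposition \ref{change_of_lift} is ``straightforward and left to the reader,'' and your computation — substituting $\sigma_Q^2(q)=\sigma_Q^1(q)+\widetilde{\phi(q)}$ into the defining relations \eqref{VB_def1}, then invoking Lemma \ref{formulas}, the formula $\lb \sigma_Q(q),\widetilde{\psi}\rb=\widetilde{\lozenge_q\psi}$, axiom (CA3) and Lemma \ref{formula_for_Dl} — is precisely that computation, with the sign in (3) and the upgrade of ${\nabla^1}^*$ to ${\nabla^2}^*$ by the terms quadratic in $\phi$ (together with the skew-symmetry of $\phi$ coming from both splittings being Lagrangian) handled correctly. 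The paper's adjacent remark also hints at the alternative of reading the result off the change-of-splitting equations at the end of \S\ref{morphism_Lie_2} on the $[2]$-manifold side via Theorem \ref{main}, but your direct computation on the VB-Courant algebroid side is the natural geometric counterpart and is sound.
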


As an application, we get the following corollary of Theorem
\ref{main} and Theorem \ref{fat}.  Given $\Delta\colon
\Gamma(Q)\times\Gamma(Q^*)\to\Gamma(Q^*)$ and
$\nabla\colon\Gamma(Q)\times\Gamma(B)\to\Gamma(B)$, we define the
derivations $\lozenge\colon
\Gamma(Q)\times\Gamma(\operatorname{Hom}(B,Q^*))\to\Gamma(\operatorname{Hom}(B,Q^*))$
by $(\lozenge_q\phi)(b)=\Delta_q(\phi(b))-\phi(\nabla_qb)$.
\begin{corollary}\label{fat_CA}
  Let $(Q\oplus B^*\to M, \rho_Q,
  \partial_B^*, \lb\cdot\,,\cdot\rb, \nabla, \omega)$ be a split Lie 2-algebroid. Then the vector bundle $\mathsf
  E:=Q\oplus\operatorname{Hom}(B,Q^*)$ is a
  Courant algebroid with pairing in $B^*$ given by $\langle
  (q_1,\phi_1), (q_2,\phi_2)\rangle=\phi_1^*(q_2)+\phi_2^*(q_1)$, with
  the anchor $\tilde\rho\colon \mathsf E\to
  \widehat{\operatorname{Der}(B)}$,
  $\tilde\rho(q,\phi)^*=\nabla_q^*+\phi^*\circ \partial_B^*$ over $\rho(q)$ and
  the bracket given by
\begin{equation*}
\begin{split}
  \lb (q_1,\phi_1), (q_2, \phi_2)\rb=\Bigl(\lb q_1,
  q_2\rb_\Delta+\partial_B(\phi_1^*(q_2)),
  &\lozenge_{q_1}\phi_2-\lozenge_{q_2}\phi_1+\nabla_\cdot^*(\phi_1^*(q_2))
  \\
  +&\phi_2\circ \partial_B\circ\phi_1-\phi_1\circ
  \partial_B\circ\phi_2+R_\omega(q_1,q_2)\Bigr).
\end{split}
\end{equation*}
The map $\mathcal D\colon \Gamma(B^*)\to \Gamma(\mathsf E)$ sends $q $
to $(\partial_B^*q,\nabla^*_\cdot q)$. The bracket does not depend on the
choice of splitting.
\end{corollary}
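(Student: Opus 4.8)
The plan is to deduce the statement geometrically, exactly as the phrase ``corollary of Theorem \ref{main} and Theorem \ref{fat}'' suggests, rather than by verifying the Courant axioms on $\mathsf E$ by hand. First I would realise the given split Lie $2$-algebroid $(Q\oplus B^*,\rho_Q,\partial_B^*,\lb\cdot\,,\cdot\rb,\nabla,\omega)$ as a \emph{decomposition} of a VB-Courant algebroid: I equip the decomposed metric double vector bundle $(\mathbb E=Q\times_MB\times_MQ^*,B,Q,M)$ with its canonical Lagrangian splitting $\Sigma$ and invoke the converse part of Theorem \ref{main} to obtain a linear Courant algebroid structure on $\mathbb E\to B$ whose associated split Lie $2$-algebroid is the given one. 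Theorem \ref{fat} then says that the fat bundle $\widehat{\mathbb E}=\Gamma^l_B(\mathbb E)$ is a Courant algebroid with pairing in $B^*$, and $\Sigma$ furnishes the vector bundle isomorphism
\[
\mathsf E=Q\oplus\operatorname{Hom}(B,Q^*)\xrightarrow{\ \sim\ }\widehat{\mathbb E},\qquad (q,\phi)\longmapsto \sigma_Q(q)+\widetilde\phi,
\]
since every linear section over $B$ decomposes uniquely as a Lagrangian lift plus a core-linear section. All the claimed formulas are then simply the transport of the intrinsic structure of $\widehat{\mathbb E}$ along this isomorphism.

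Next I would read off the data in the order pairing, anchor, $\mathcal D$, bracket, since each depends on the previous ones. The pairing is immediate from the metric double vector bundle axioms: $\langle\sigma_Q(q_1),\sigma_Q(q_2)\rangle=0$ (Lagrangian), $\langle\tau_1^\dagger,\tau_2^\dagger\rangle=0$, and $\langle\sigma_Q(q),\tau^\dagger\rangle=q_B^*\langle q,\tau\rangle$; writing $\widetilde\phi=\sum_i\ell_{\beta_i}\tau_i^\dagger$ for $\phi=\sum_i\beta_i\otimes\tau_i$ gives $\langle\sigma_Q(q),\widetilde\phi\rangle=\ell_{\phi^*(q)}$, hence $\langle(q_1,\phi_1),(q_2,\phi_2)\rangle=\phi_1^*(q_2)+\phi_2^*(q_1)$. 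For the anchor I would use that $\Theta$ is a bundle map over $B$, so it is $C^\infty(B)$-linear on sections: from $\Theta(\sigma_Q(q))=\widehat{\nabla_q}$, $\Theta(\tau^\dagger)=(\partial_B\tau)^\uparrow$ and $\Theta(\widetilde\phi)=\sum_i\ell_{\beta_i}(\partial_B\tau_i)^\uparrow$ one identifies, via \eqref{ableitungen}, the derivation of $B^*$ associated to $\Theta(\sigma_Q(q)+\widetilde\phi)$ as $\nabla_q^*+\phi^*\circ\partial_B^*$, which is the stated $\tilde\rho(q,\phi)^*$. The operator $\mathcal D$ is then forced by its defining property $\langle\mathcal Dv,\chi\rangle=\tilde\rho(\chi)(v)$: since $\chi=(q',\phi')$ acts on $\gamma\in\Gamma(B^*)$ by $\nabla^*_{q'}\gamma+\phi'^*(\partial_B^*\gamma)$, comparison with the pairing formula gives $\mathcal D\gamma=(\partial_B^*\gamma,\nabla^*_\cdot\gamma)$.

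The heart of the proof is the bracket, which I would obtain by expanding $\lb\sigma_Q(q_1)+\widetilde{\phi_1},\sigma_Q(q_2)+\widetilde{\phi_2}\rb$ bilinearly into four terms and collecting the $Q$- and $\operatorname{Hom}(B,Q^*)$-components. The term $\lb\sigma_Q(q_1),\sigma_Q(q_2)\rb$ is supplied by \eqref{VB_def1}; the mixed term $\lb\sigma_Q(q_1),\widetilde{\phi_2}\rb=\widetilde{\lozenge_{q_1}\phi_2}$ and the identity $\partial_B\circ\Delta=\nabla\circ\partial_B$ come from the proposition relating $\Delta$, $\nabla$ and the bracket of core-linear sections; the core-core term $\lb\widetilde{\phi_1},\widetilde{\phi_2}\rb=\widetilde{\phi_2\circ\partial_B\circ\phi_1-\phi_1\circ\partial_B\circ\phi_2}$ is Lemma \ref{formulas}(2). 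The one term that needs care is the reversed mixed bracket $\lb\widetilde{\phi_1},\sigma_Q(q_2)\rb$, which I would compute from the symmetry axiom (CA3):
\[
\lb\widetilde{\phi_1},\sigma_Q(q_2)\rb=-\lb\sigma_Q(q_2),\widetilde{\phi_1}\rb+\mathcal D\langle\widetilde{\phi_1},\sigma_Q(q_2)\rangle=-\widetilde{\lozenge_{q_2}\phi_1}+\mathcal D(\phi_1^*(q_2)),
\]
so that the $\mathcal D$-contribution $(\partial_B^*(\phi_1^*(q_2)),\nabla_\cdot^*(\phi_1^*(q_2)))$ produces both the $\partial_B^*(\phi_1^*(q_2))=l(\phi_1^*(q_2))$ in the $Q$-slot and the $\nabla_\cdot^*(\phi_1^*(q_2))$ in the $\operatorname{Hom}$-slot. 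Assembling the four terms reproduces the displayed formula, the $R_\omega$-term coming from \eqref{VB_def1} and the Lie-algebra-bundle terms from Lemma \ref{formulas}(2). I expect the main obstacle to be precisely the bookkeeping of this reversed bracket, together with the correct identification of $\nabla_\cdot^*\gamma$ as an element of $\operatorname{Hom}(B,Q^*)$ and keeping the duals $\partial_B$ versus $\partial_B^*=l$ straight throughout.

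Finally, the independence of the splitting is not a separate computation but a structural observation: the Courant algebroid $\widehat{\mathbb E}$ of Theorem \ref{fat} is intrinsic to $\mathbb E$, and the formula above is merely its expression in the chosen Lagrangian splitting. Proposition \ref{change_of_lift} records exactly how $(\lb\cdot\,,\cdot\rb,\Delta,\nabla,\omega)$ transform under a change of lift $\phi$, while the induced isomorphism $\mathsf E\cong\widehat{\mathbb E}$ changes by $(q,\psi)\mapsto(q,\psi-\phi(q))$; substituting these relations shows that the bracket formula transforms consistently with this re-identification, so the underlying Courant bracket on $\widehat{\mathbb E}$ is unchanged, which is the asserted independence.
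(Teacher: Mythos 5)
Your proposal is correct and follows exactly the route the paper intends: the corollary is stated there without further argument as a direct consequence of Theorems \ref{main} and \ref{fat}, i.e.\ one realises the split Lie $2$-algebroid as a Lagrangian-decomposed VB-Courant algebroid via the converse part of Theorem \ref{main}, invokes Theorem \ref{fat} for the fat bundle, and transports the structure through the isomorphism $(q,\phi)\mapsto\sigma_Q(q)+\widetilde{\phi}$, reading off the pairing, the anchor, $\mathcal D$ (your derivation from $\langle\mathcal D v,e\rangle=\tilde\rho(e)(v)$ recovers Lemma \ref{formula_for_Dl}), and the bracket from \eqref{VB_def1}, the $\lozenge$-lemma, Lemma \ref{formulas} and (CA3), with splitting-independence being intrinsic exactly as you argue via Proposition \ref{change_of_lift}. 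One small remark: carefully assembling your four terms with the sign convention of \eqref{VB_def1} gives $-R_\omega(q_1,q_2)$ in the $\operatorname{Hom}(B,Q^*)$-slot and $\partial_B^*(\phi_1^*(q_2))=l(\phi_1^*(q_2))$ in the $Q$-slot (as you correctly write), so the $+R_\omega(q_1,q_2)$ and the $\partial_B$ appearing in the corollary's displayed formula are sign/typo slips in the statement rather than gaps in your argument.
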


\subsection{Examples of VB-Courant algebroids and the corresponding split Lie 2-algebroids}
We give here some examples of VB-Courant algebroids, and we compute
the corresponding classes of split Lie $2$-algebroids. We find the
split Lie $2$-algebroids described in Section
\ref{examples_split_lie_2}. In each of the examples below, it is easy
to check that the Courant algebroid structure is linear. Hence, it is
easy to check geometrically that the objects described in
\ref{examples_split_lie_2} are indeed split Lie $2$-algebroids.

\subsubsection{The standard Courant algebroid over a vector bundle}\label{standard_VB_Courant_ex}
We have discussed this example in great detail in \cite{Jotz13a}, but
not in the language of split Lie $2$-algebroids.  Note further that,
in \cite{Jotz13a}, we worked with general, not necessarily Lagrangian,
linear splittings.

Let $q_E\colon E\to M$ be a vector bundle and consider the VB-Courant
algebroid
\begin{equation*}
\begin{xy}
  \xymatrix{
    TE\oplus T^*E\ar[rr]^{\Phi_E:=({q_E}_*, r_E)}\ar[d]_{\pi_E}&& TM\oplus E^*\ar[d]\\
    E\ar[rr]_{q_E}&&M }
\end{xy}
\end{equation*} with base $E$
and side $TM\oplus E^*\to M$, and 
with core $E\oplus T^*M\to M$, or in other words the standard (VB-)Courant
algebroid over a vector bundle $q_E\colon E\to M$. Recall that $TE\oplus T^*E$ has a natural linear metric
(see \cite{Jotz17a}). Linear splittings of
$TE\oplus T^*E$ are in bijection with dull brackets on sections of
$TM\oplus E^*$ \cite{Jotz13a}, and so also with Dorfman connections $\Delta\colon
\Gamma(TM\oplus E^*)\times\Gamma(E\oplus T^*M)\to\Gamma(E\oplus
T^*M)$, and Lagrangian splittings of $TE\oplus T^*E$ are in bijection with
skew-symmetric  dull brackets on sections of
$TM\oplus E^*$ \cite{Jotz17a}.

The anchor $\Theta=\pr_{TE}\colon TE\oplus T^*E\to TE$ restricts to
the map $\partial_{E}=\pr_{E}\colon E\oplus T^*M\to E$ on the cores,
and defines an anchor $\rho_{TM\oplus E^*}=\pr_{TM}\colon TM\oplus
E^*\to TM$ on the side.  In other words, the anchor of
$(e,\theta)^\dagger$ is $e^\uparrow\in \mx^c(E)$ and if
$\widetilde{(X,\epsilon)}$ is a linear section of $TE\oplus T^*E\to E$
over $(X,\epsilon)\in\Gamma(TM\oplus E^*)$, the anchor
$\Theta(\widetilde{(X,\epsilon)})\in \mx^l(E)$ is linear over
$X$. 
Let $\iota_E\colon E\to E\oplus T^*M$ be the canonical inclusion. In
\cite{Jotz13a} we proved that for 
 $q, q_1,q_2\in\Gamma(TM\oplus E^*)$ and $\tau,\tau_1,
  \tau_2\in\Gamma(E\oplus T^*M)$,   
the Courant-Dorfman bracket on sections of $TE\oplus T^*E\to E$ is
given by
\begin{enumerate}
\item $\left\lb \sigma(q),
    \tau^\dagger\right\rb=\left(\Delta_{q}\tau\right)^\dagger$,
 \item $ \left\lb \sigma(q_1), \sigma(q_2)\right\rb =\sigma(\lb q_1,
   q_2\rb_\Delta)-\widetilde{R_\Delta(q_1, q_2)\circ \iota_E}$,
\end{enumerate}
and that the anchor $\rho$ is described by
$\Theta(\sigma(q))=\widehat{\nabla_q^*}\in\mx(E)$,
where $\nabla\colon \Gamma(TM\oplus E^*)\times\Gamma(E)\to \Gamma(E)$
is defined by $\nabla_q=\pr_E\circ\Delta_q\circ\iota_E$ for all $q\in\Gamma(TM\oplus E^*)$.

Hence, if we choose a Lagrangian splitting of $TE\oplus T^*E$, we
find the split Lie 2-algebroid of Example \ref{standard}. 

\subsubsection{VB-Courant algebroid defined by a VB-Lie algebroid}\label{VB_Courant_alg}
More generally, let
\begin{equation*}
\begin{xy}
  \xymatrix{
    D\ar[r]^{\pi_B}\ar[d]_{\pi_A}& B\ar[d]^{q_B}\\
    A\ar[r]_{q_A}&M }
\end{xy}
\end{equation*}
(with core $C$) be endowed with a VB-Lie algebroid structure $(D\to B, A\to
M)$. Then the pair $(D,D\duer B)$ of vector bundles over $B$ is a Lie
bialgebroid, with $D\duer B$ endowed with the trivial Lie algebroid
structure.  We get a linear Courant algebroid $D\oplus_B (D\duer B)$
over $B$ with side $A\oplus C^*$
\begin{equation*}
\begin{xy}
  \xymatrix{
   D\oplus_B (D\duer B)\ar[r]\ar[d]&B\ar[d]\\
   A\oplus C^* \ar[r]&M }
\end{xy}
\end{equation*}
and core $C\oplus A^*$.  We check that the Courant algebroid structure
is linear. Let $\Sigma\colon A\times_M B\to D$ be a linear splitting
of $D$. Recall from Appendix \ref{appendix_dual} that we can define a
linear splitting of $D\duer B$ by $\Sigma^\star\colon B\times_M C^*\to
D\duer B$, $\langle\Sigma^\star(b_m,\gamma_m),
\Sigma(a_m,b_m)\rangle=0$ and $\langle\Sigma^\star(b_m,\gamma_m),
c^\dagger(b_m)\rangle=\langle \gamma_m,c(m)\rangle$ for all $b_m\in
B$, $a_m\in A$, $\gamma_m\in C^*$ and $c\in\Gamma(C)$. The linear
splitting $\tilde\Sigma\colon B\times_M(A\oplus C^*)\to D\oplus_B
(D\duer B)$,
$\tilde\Sigma(b_m,(a_m,\gamma_m))=(\Sigma(a_m,b_m),\Sigma^\star(b_m,\gamma_m))$
is then a Lagrangian splitting.  A computation shows that the Courant
bracket on $\Gamma_B(D\oplus_B(D\duer B))$ is given by
\begin{equation*}
\begin{split}
  &\left\lb \tilde\sigma_{A\oplus C^*}(a_1,\gamma_1),
    \tilde\sigma_{A\oplus
      C^*}(a_2,\gamma_2)\right\rb\\
&=([\sigma_A(a_1),\sigma_A(a_2)],
  \ldr{\sigma_A(a_1)}\sigma^\star_{C^*}(\gamma_2)-\ip{\sigma_A(a_2)}\dr\sigma^\star_{C^*}(\gamma_1))\\
  &=\left(\sigma_A[a_1,a_2]-\widetilde{R(a_1,a_2)},
    \sigma^\star_{C^*}(\nabla_{a_1}^*\gamma_2-\nabla_{a_2}^*\gamma_1)-\widetilde{\langle\gamma_2,
      R(a_1, \cdot)\rangle}
    +\widetilde{\langle\gamma_1, R(a_2, \cdot)\rangle}\right)\\
  &\left\lb \tilde\sigma_{A\oplus C^*}(a,\gamma), (c,\alpha)^\dagger\right\rb
=\left(\nabla_ac^\dagger,(\ldr{a}\alpha +\langle\nabla_\cdot^*\gamma,c\rangle)^\dagger\right)\\
  &\left\lb (c_1,\alpha_1)^\dagger, (c_2,\alpha_2)^\dagger\right\rb=0,
\end{split}
\end{equation*}
and the anchor of $D\oplus_B(D\duer B)$ is defined by
\begin{equation*}
\begin{split}
\Theta(\tilde\sigma_{A\oplus C^*}(a,\gamma))=\Theta(\sigma_{A}(a))=\widehat{\nabla_a}\in \mx^l(B), \quad 
\Theta((c,\alpha)^\dagger)=(\partial_Bc)^\uparrow\in \mx^c(B),
\end{split}
\end{equation*}
where $(\partial_B\colon C\to B,
\nabla\colon\Gamma(A)\times\Gamma(B)\to\Gamma(B),\nabla\colon\Gamma(A)\times\Gamma(C)\to\Gamma(C),R)$
is the 2-representation of $A$ associated to the splitting
$\Sigma\colon A\times_MB\to D$ of the VB-algebroid $(D\to B, A\to M)$.
Hence, we have found the split Lie 2-algebroid described in Example \ref{semi-direct}.

\subsubsection{The tangent Courant algebroid}\label{tangent_Courant}
We consider here a Courant algebroid\linebreak $(\mathsf
E,\rho_{\mathsf E},\lb\cdot\,,\cdot\rb,\langle \cdot\,,\cdot\rangle)$.
In this example, $\mathsf E$ will always be anchored by the Courant
algebroid anchor map $\rho_{\mathsf E}$ and paired with itself by
$\langle \cdot\,,\cdot\rangle$ and $\mathcal
D=\Beta\inv\circ\rho_{\mathsf E}^*\circ\dr\colon C^\infty(M)\to
\Gamma(\mathsf E)$.  Note that $\lb\cdot\,,\cdot\rb$ is not a dull bracket.

We show that, after the choice of a metric connection on $\mathsf E$
and so of a Lagrangian splitting $\Sigma^\nabla\colon
TM\times_M\mathsf E\to T\mathsf E$ (see Examples
\ref{metric_connections} and \ref{tangent_double}), the VB-Courant
algebroid structure on $(T\mathsf E\to TM, \mathsf E\to M)$ is
equivalent to the split Lie 2-algebroid defined by $\nabla$ as in
Example \ref{adjoint}.

\begin{theorem}\label{tangent_courant_double}
  Choose a linear connection $\nabla\colon\mx(M)\times\Gamma(\mathsf
  E)\to\Gamma(\mathsf E)$ that preserves the pairing on $\mathsf E$.
  The Courant algebroid structure on $T\mathsf E\to TM$ can be
  described as follows:
\begin{enumerate}
\item The pairing is given by 
\[\langle e_1^\dagger, e_2^\dagger\rangle=0, 
\quad \langle \sigma_{\mathsf E}^\nabla(e_1), e_2^\dagger\rangle=p_M^*\langle e_1,
e_2\rangle, \quad \text{ and } \langle \sigma_{\mathsf E}^\nabla (e_1),
  \sigma_{\mathsf E}^\nabla (e_2)\rangle=0,\]
\item the anchor is given by $\Theta(\sigma_{\mathsf E}^\nabla
  (e))=\widehat{\nabla^{\rm bas}_e}$ and
  $\Theta(e^\dagger)=(\rho_{\mathsf E}(e))^\uparrow$,
\item and the bracket is given by
\[\left\lb e_1^\dagger, e_2^\dagger\right\rb=0, \qquad 
\left\lb \sigma_{\mathsf E}^\nabla (e_1), e_2^\dagger\right\rb=(\Delta_{e_1}
e_2)^\dagger\] and    \[\left\lb \sigma_{\mathsf E}^\nabla (e_1), \sigma_{\mathsf E}^\nabla (e_2)\right\rb=\sigma_{\mathsf E}^\nabla (\lb e_1,
  e_2\rb_\Delta)-\widetilde{R_\Delta^{\rm bas}(e_1, e_2)}\]
\end{enumerate}
for all $e, e_1,e_2\in\Gamma(\mathsf E)$.
\end{theorem}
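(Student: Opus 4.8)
The plan is to use the fact that the Courant algebroid on $T\mathsf E\to TM$ is the \emph{tangent prolongation} of the Courant algebroid $\mathsf E$: its anchor is $\Theta=T\rho_{\mathsf E}$, its pairing is the tangent of $\langle\cdot\,,\cdot\rangle\colon \mathsf E\times_M\mathsf E\to\R$, and its bracket is determined on complete and core lifts by
\[
\lb Te_1,Te_2\rb=T\lb e_1,e_2\rb,\qquad \lb Te_1,e_2^\dagger\rb=\lb e_1,e_2\rb^\dagger,\qquad \lb e_1^\dagger,e_2^\dagger\rb=0 .
\]
Throughout I will use the identity, read off from the formula for $\sigma_{\mathsf E}^\nabla$ in Example \ref{tangent_double},
\[
Te=\sigma_{\mathsf E}^\nabla(e)+\widetilde{\nabla_\cdot e},
\]
where $\nabla_\cdot e\in\Omega^1(M,\mathsf E)=\Gamma(\Hom(TM,\mathsf E))$ is viewed as a core-linear section, together with the identification of the core of $T\mathsf E$ with $\mathsf E$ via the metric, under which the core-anchor is $\partial_B=\rho_{\mathsf E}$.

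Statement (1) is the quickest: the first two equalities are exactly axioms (1) and (2) of a metric double vector bundle (recalled in Example \ref{metric_connections}), while the third, $\langle\sigma_{\mathsf E}^\nabla(e_1),\sigma_{\mathsf E}^\nabla(e_2)\rangle=0$, is the statement that the splitting attached to the \emph{metric} connection $\nabla$ is Lagrangian, again from Example \ref{metric_connections}. For statement (2) I compute $\Theta(e^\dagger)=T\rho_{\mathsf E}(e^\dagger)$, which is the core lift $(\rho_{\mathsf E}(e))^\uparrow$ since $T\rho_{\mathsf E}$ restricts to $\rho_{\mathsf E}$ on cores; and $\Theta(\sigma_{\mathsf E}^\nabla(e))=T\rho_{\mathsf E}(Te)-T\rho_{\mathsf E}(\widetilde{\nabla_\cdot e})$, where $T\rho_{\mathsf E}(Te)=T(\rho_{\mathsf E}(e))$ is the complete lift of the vector field $\rho_{\mathsf E}(e)$, i.e.~the linear vector field of the derivation $X\mapsto[\rho_{\mathsf E}(e),X]$, while $T\rho_{\mathsf E}(\widetilde{\nabla_\cdot e})$ is the core-linear section $\widetilde{\rho_{\mathsf E}(\nabla_\cdot e)}$, contributing the tensorial derivation $X\mapsto\rho_{\mathsf E}(\nabla_Xe)$. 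Reading off \eqref{ableitungen} identifies the resulting derivation with $\nabla^{\rm bas}_e X=[\rho_{\mathsf E}(e),X]+\rho_{\mathsf E}(\nabla_Xe)$, so that $\Theta(\sigma_{\mathsf E}^\nabla(e))=\widehat{\nabla^{\rm bas}_e}$.

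For statement (3) the core--core bracket vanishes by the prolongation formula. For the linear--core bracket I substitute and use Lemma \ref{formulas}(1) with $\partial_B=\rho_{\mathsf E}$:
\[
\lb\sigma_{\mathsf E}^\nabla(e_1),e_2^\dagger\rb=\lb Te_1,e_2^\dagger\rb-\lb\widetilde{\nabla_\cdot e_1},e_2^\dagger\rb=\lb e_1,e_2\rb^\dagger+(\nabla_{\rho_{\mathsf E}(e_2)}e_1)^\dagger=(\Delta_{e_1}e_2)^\dagger,
\]
which is exactly the basic Dorfman connection of Example \ref{adjoint}; dually, its skew-symmetric dull bracket is $\lb e_1,e_2\rb_\Delta=\lb e_1,e_2\rb-\rho_{\mathsf E}^*\langle\nabla_\cdot e_1,e_2\rangle$. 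By Theorem \ref{main} the last bracket must then have the form $\lb\sigma_{\mathsf E}^\nabla(e_1),\sigma_{\mathsf E}^\nabla(e_2)\rb=\sigma_{\mathsf E}^\nabla(\lb e_1,e_2\rb_\Delta)-\widetilde{R(e_1,e_2)}$ for a unique $R(e_1,e_2)\in\Gamma(\Hom(TM,\mathsf E))$, so the linear part is already accounted for and only the identity $R=R_\Delta^{\rm bas}$ remains.

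To pin down $R$ I expand $\lb\sigma_{\mathsf E}^\nabla(e_1),\sigma_{\mathsf E}^\nabla(e_2)\rb$ by writing each lift as $Te_i-\widetilde{\nabla_\cdot e_i}$. The leading term is $T\lb e_1,e_2\rb=\sigma_{\mathsf E}^\nabla(\lb e_1,e_2\rb)+\widetilde{\nabla_\cdot\lb e_1,e_2\rb}$; the two mixed terms $\lb Te_1,\widetilde{\nabla_\cdot e_2}\rb$ and $\lb\widetilde{\nabla_\cdot e_1},Te_2\rb$ are evaluated with the proposition describing $\lb\chi,\widetilde\phi\rb$ for the fat Courant algebroid (using $\Delta_{Te}e'=\lb e,e'\rb$ and $\nabla_{Te}X=[\rho_{\mathsf E}(e),X]$) and with (CA3) to reverse the order of the second; and the last term $\lb\widetilde{\nabla_\cdot e_1},\widetilde{\nabla_\cdot e_2}\rb$ is given by Lemma \ref{formulas}(2). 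Collecting the core-linear components, and using that $\nabla$ preserves the metric so that $R_{\nabla^*}=R_\nabla$ under $\mathsf E\simeq\mathsf E^*$, the sum is precisely the right-hand side of \eqref{def_R_nabla_bas}, i.e.~$-\widetilde{R_\Delta^{\rm bas}(e_1,e_2)}$, which finishes the proof. I expect this last bookkeeping to be the main obstacle: since the Dorfman bracket of $\mathsf E$ is not skew, reconciling the tangent-prolonged bracket with the skew dull bracket $\lb\cdot\,,\cdot\rb_\Delta$ forces careful use of (CA3) --- in particular the evaluation of $\mathcal D$ on the linear function $\langle\widetilde{\nabla_\cdot e_1},Te_2\rangle=\ell_{\langle\nabla_\cdot e_1,e_2\rangle}$ --- and the matching of the accumulated connection, bracket and curvature contributions with the six summands of \eqref{def_R_nabla_bas}.
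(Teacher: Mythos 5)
Your proposal is correct and follows essentially the same route as the paper's proof: both start from the Boumaiza--Zaalani characterisation of the tangent Courant algebroid on $Te$ and $e^\dagger$, substitute $\sigma_{\mathsf E}^\nabla(e)=Te-\widetilde{\nabla_\cdot e}$, and identify the resulting Dorfman connection, basic connection and basic curvature, with the computation of $\mathcal D\ell_\theta=T(\Beta\inv\rho_{\mathsf E}^*\theta)+\widetilde{\psi}$ playing exactly the pivotal role you flag. The only (harmless) deviations are organisational: you invoke Theorem \ref{main} to fix the linear part $\sigma_{\mathsf E}^\nabla(\lb e_1,e_2\rb_\Delta)$ a priori and quote Lemma \ref{formulas} and the fat-Courant proposition for the brackets with core-linear sections, where the paper verifies these same formulas directly before carrying out the identical term-collection.
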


\begin{proof} We use the characterisation of the tangent Courant
  algebroid in \cite{BoZa09} (see also \cite{Li-Bland12}): the pairing
has already been discussed in Example \ref{metric_connections}.
It is given by $\langle Te_1, Te_2\rangle=\ell_{\dr\langle e_1,
    e_2\rangle}$ and $\langle Te_1, e_2^\dagger\rangle=p_M^*\langle
  e_1, e_2\rangle$.  The anchor is given by
  $\Theta(Te)=\widehat{\ldr{\rho_{\mathsf E}(e)}}\in\mx(TM)$ and
  $\Theta(e^\dagger)=(\rho_{\mathsf E}(e))^\uparrow\in \mx(TM)$. The
  bracket is given by $\lb Te_1, Te_2\rb =T\lb e_1, e_2\rb$ and $\lb
  Te_1, e_2^\dagger\rb=\lb e_1, e_2\rb^\dagger$ for all $e, e_1,
e_2\in \Gamma(\mathsf E)$.

(1) is easy to check (see also Example \ref{metric_connections} and
\cite{Jotz17a}).  We check (2), i.e.~that the anchor satisfies
$\Theta(\sigma^\nabla_{\mathsf E}(e))=\widehat{\nabla_e^{\rm bas}}$.
We have for $\theta\in\Omega^1(M)$ and $v_m\in TM$: $
\Theta(\sigma^\nabla_{\mathsf E} (e)(v_m))(\ell_\theta)=
\ell_{\ldr{\rho_{\mathsf E}(e)}\theta}(v_m)-\langle \theta_m, \rho_{\mathsf
  E}(\nabla_{v_m}e)\rangle=\ell_{ {\nabla_e^{\rm bas}}^*\theta}(v_m)$
and for $f\in C^\infty(M)$: $\Theta(\sigma^\nabla_{\mathsf E}
(e))(p_M^*f)=p_M^*(\rho_{\mathsf E}(e)f)$.  This proves the equality.

Then we compute the brackets of our linear and core sections.  Choose
sections $\phi,\phi'$ of $\operatorname{Hom}(TM,\mathsf E)$. Then
$\left\lb T e, \widetilde{\phi}\right\rb=\widetilde{\ldr{e}\phi}$,
with $\ldr{e}\phi\in\Gamma(\Hom(TM,\mathsf E))$ defined by
$(\ldr{e}\phi)(X)=\lb e, \phi(X)\rb-\phi([\rho_{\mathsf E}(e),X])$ for
all $X\in\mx(M)$. The equality $\left\lb\widetilde{\phi}, T
  e\right\rb=-\widetilde{\ldr{e}\phi}+\mathcal
D\ell_{\langle\phi(\cdot),e\rangle}$ follows. For
$\theta\in\Omega^1(M)$, we compute $\left\langle \mathcal
  D\ell_\theta,
  e^\dagger\right\rangle=\Theta(e^\dagger)(\ell_\theta)=p_M^*\langle
\rho_{\mathsf E}(e), \theta\rangle$.  Thus, $\mathcal
D\ell_\theta=T(\Beta\inv\rho_{\mathsf E}^*\theta)+\widetilde{\psi}$
for a section $\psi\in\Gamma(\operatorname{Hom}(TM,\mathsf E))$ to be
determined.  Since $\left\langle
  \mathcal D\ell_\theta,
  Te\right\rangle=\Theta(Te)(\ell_\theta)=\ell_{\ldr{\rho_{\mathsf E}(e)}\theta}$, 
the bracket $\langle T(\Beta\inv\rho_{\mathsf E}^*\theta)+\widetilde{\psi},
Te\rangle=\ell_{\dr\langle\theta,\rho_{\mathsf E}(e)\rangle+\langle\psi(\cdot),
  e\rangle}$ must equal $\ell_{\ldr{\rho_{\mathsf E}(e)}\theta}$, and we find
$\langle\psi(\cdot), e\rangle=\ip{\rho_{\mathsf E}(e)}\dr\theta$. Because
$e\in\Gamma(\mathsf E)$ was arbitrary we find 
$\psi(X)=-\Beta\inv\rho_{\mathsf E}^*\ip{X}\dr\theta$ for
$X\in\mx(M)$.  We get in particular
\[\left\lb\widetilde{\phi}, T
  e\right\rb=-\widetilde{\ldr{e}\phi}+T(\Beta\inv\rho_{\mathsf
  E}^*\langle\phi(\cdot),e\rangle)-\widetilde{\Beta\inv\rho_{\mathsf
    E}^*\ip{X}\dr\langle\phi(\cdot),e\rangle}.
\]
The formula $\left\lb \widetilde{\phi},
  \widetilde{\phi'}\right\rb=\widetilde{\phi'\circ\rho_{\mathsf
    E}\circ\phi-\phi\circ\rho_{\mathsf E}\circ\phi'}$
can easily be checked, as well as $\left\lb \widetilde{\phi},
  e^\dagger\right\rb=-\left\lb e^\dagger,
  \widetilde{\phi}\right\rb=-(\phi(\rho_{\mathsf E}(e)))^\dagger$.
Using this, we find now easily that
\begin{equation*}
\begin{split}&\left\lb \sigma^\nabla_{\mathsf E} (e_1), \sigma^\nabla_{\mathsf E} (e_2)\right\rb=\left\lb
    Te_1-\widetilde{\nabla_\cdot e_1}, Te_2-\widetilde{\nabla_\cdot
      e_2}\right\rb\\
  &=T\left\lb e_1, e_2\right\rb-\widetilde{\ldr{e_1}\nabla_\cdot e_2}
  +\widetilde{\ldr{e_2}\nabla_\cdot e_1}-T(\Beta\inv\rho_{\mathsf
    E}^*\langle\nabla_\cdot e_1,e_2\rangle)\\
  &\quad+\widetilde{\Beta\inv\rho_{\mathsf E}^*\dr\langle \nabla_\cdot
    e_1,e_2\rangle}+\widetilde{\nabla_{\rho_{\mathsf E}(\nabla_\cdot e_1)} e_2}-\widetilde{\nabla_{\rho_{\mathsf E}(\nabla_\cdot e_2)} e_1 }\\
  &=T\left\lb e_1,
    e_2\right\rb_\Delta-\widetilde{\ldr{e_1}\nabla_\cdot
    e_2}+\widetilde{\ldr{e_2}\nabla_\cdot
    e_1}+\widetilde{\Beta\inv\rho_{\mathsf E}^*\dr\langle \nabla_\cdot
    e_1,e_2\rangle}\\
  &\quad +\widetilde{\nabla_{\rho_{\mathsf E}(\nabla_\cdot e_1)}
    e_2}-\widetilde{\nabla_{\rho_{\mathsf E}(\nabla_\cdot e_2)} e_1 }.
\end{split}
\end{equation*}
Since for all $X\in \mx(M)$, we have 
\begin{equation*}
\begin{split}
  &-(\ldr{e_1}\nabla_\cdot e_2)(X)+(\ldr{e_2}\nabla_\cdot
  e_1)(X)+\beta\inv\rho_{\mathsf E}^*\ip{X}\dr\langle \nabla_\cdot
  e_1,e_2\rangle\\
  &=-\lb e_1, \nabla_X e_2\rb +\nabla_{[\rho_{\mathsf E}(e_1),
    X]}e_2+\lb e_2, \nabla_X e_1\rb-\nabla_{[\rho_{\mathsf E}(e_2),
    X]}e_1 +\beta\inv\rho_{\mathsf E}^*\ip{X}\dr\langle \nabla_\cdot
  e_1,e_2\rangle\\
  &=-\lb e_1, \nabla_X e_2\rb +\nabla_{[\rho_{\mathsf E}(e_1),
    X]}e_2-\lb \nabla_X e_1, e_2\rb-\nabla_{[\rho_{\mathsf E}(e_2),
    X]}e_1 +\beta\inv\rho_{\mathsf E}^*\ldr{X}\langle \nabla_\cdot
  e_1,e_2\rangle,
\end{split}
\end{equation*}
we find that $\left\lb \sigma^\nabla_{\mathsf E} (e_1),
  \sigma^\nabla_{\mathsf E} (e_2)\right\rb=T\lb e_1, e_2\rb_\Delta-\widetilde{R_\Delta^{\rm
  bas}(e_1,e_2)}$.  Finally we compute $ \left\lb \sigma^\nabla_{\mathsf E} (e_1),
  e_2^\dagger\right\rb=\left\lb Te_1-\widetilde{\nabla_\cdot e_1},
  e_2^\dagger\right\rb=\lb e_1, e_2\rb^\dagger+\nabla_{\rho_{\mathsf
    E}(e_2)}e_1^\dagger=\Delta_{e_1}e_2^\dagger$.
\end{proof}

\subsection{Categorical equivalence of Lie 2-algebroids and VB-Courant
  algebroids}
In this section we quickly describe morphisms of VB-Courant
algebroids. Then we find an equivalence between the category of
VB-Courant algebroids and the category of Lie $2$-algebroids. Note
that a bijection between VB-Courant algebroids and Lie $2$-algebroids
was already described in \cite{Li-Bland12}.

\subsubsection{Morphisms of VB-Courant algebroids}\label{mor_VB_Courant}
Recall from \S\ref{recall_n} that a morphism $\Omega\colon \mathbb
E_1\to\mathbb E_2$ of metric double vector bundles is an
isotropic relation $\Omega\subseteq \overline{\mathbb
  E_1}\times\mathbb E_2$ that is the dual of a morphism $\mathbb
(E_1)\duer{Q_1}\to \mathbb (E_2)\duer{Q_2}$. Assume that $\mathbb E_1$ and
$\mathbb E_2$ have linear Courant algebroid structures. Then $\Omega$
is a morphism of VB-Courant algebroid if it is a Dirac structure (with
support) in $\overline{\mathbb E_1}\times\mathbb E_2$.

\medskip

Choose two Lagrangian splittings $\Sigma^1\colon Q_1\times
B_1\to\mathbb E_1$ and $\Sigma^2\colon Q_2\times B_2\to\mathbb
E_2$. Then there exists four
structure maps $\omega_0\colon M_1\to M_2$, $\omega_Q\colon Q_1\to
Q_2$, $\omega_B\colon B_1^*\to B_2^*$ and
$\omega_{12}\in\Omega^2(Q_1,\omega_0^*B_2^*)$ that define completely
$\Omega$. More precisely, $\Omega$ is spanned over
$\operatorname{Graph}(\omega_Q\colon Q_1\to Q_2)$ by sections
$\tilde b\colon \operatorname{Graph}(\omega_Q)\to \Omega$,
\[\tilde b(q_m,\omega_Q(q_m))= \left(\sigma_{B_1}(\omega_B^\star
  b)(q_m)+\widetilde{\omega_{12}^\star(b)}(q_m),
  \sigma_{B_2}(b)(\omega_Q(q_m))\right)
\]
for all $b\in\Gamma_{M_2}(B_2)$,
and $\tau^\times\colon \operatorname{Graph}(\omega_Q)\to \Omega$,
\[
\tau^\times(q_m,\omega_Q(q_m))=
\left((\omega_Q^\star\tau)^\dagger(q_m),
  \tau^\dagger(\omega_Q(q_m))\right)
\]
for all $\tau\in\Gamma_{M_2}(Q_2^*)$.
Note that $\Omega$ projects under $\pi_{B_1}\times\pi_{B_2}$ to
$R_{\omega_B^*}\subseteq B_1\times B_2$.  If $q\in \Gamma(Q_1)$
then $\omega_Q^!q\in\Gamma_{M_1}(\omega_0^*Q_2)$ can be written as
$\sum_{i}f_i\omega_0^!q_i$ with $f_i\in C^\infty(M_1)$ and
$q_i\in\Gamma_{M_2}(Q_2)$. The pair $\left(\sigma_{B_1}(\omega_B^\star
  b)(q_m)+\widetilde{\omega_{12}^\star(b)}(q_m),
  \sigma_{B_2}(b)(\omega_Q(q_m))\right)$ can be written as
\[\left(\left(\sigma_{Q_1}(q)+\langle\omega_{12}(q,\cdot), b(\omega_0(m))\rangle^\dagger\right)(\omega_B^\star
b(m)), \sum_if_i(m)\sigma_{Q_2}(q_i)(b(\omega_0(m)))\right).\] Hence,
$\Omega$ is spanned by the restrictions to $R_{\omega_B^*}$ of sections
\begin{equation}\label{section_morphism_1}
\left(\sigma_{Q_1}(q)\circ\pr_1+\langle\omega_{12}(q,\cdot),\pr_2\rangle^\dagger\circ\pr_1,
  \sum_i(f_i\circ q_{B_1}\circ \pr_1)\cdot(\sigma_{Q_2}(q_i)\circ\pr_2)\right)
\end{equation}
for all $q\in \Gamma_{M_1}(Q_1)$ and
\begin{equation}\label{section_morphism_2}
\left( (\omega_Q^\star\tau)^\dagger\circ\pr_1,\tau^\dagger\circ\pr_2\right)
\end{equation}
for all $\tau\in\Gamma(Q_2^*)$. 

\medskip

Checking all the conditions in Lemma \ref{useful_for_dirac_w_support}
on the two types of sections \eqref{section_morphism_1} and
\eqref{section_morphism_2} yields that $\Omega\to R_{\omega_B^*}$ is a
Dirac structure with support if and only if
\begin{enumerate}
\item $\omega_Q\colon Q_1\to Q_2$ over $\omega_0\colon M_1\to M_2$ is compatible 
with the anchors $\rho_1\colon Q_1\to TM_1$ and $\rho_2\colon Q_2\to TM_2$:
\[T_m\omega_0(\rho_1(q_m))=\rho_2(\omega_Q(q_m))
\]
for all $q_m\in Q_1$,
\item $\partial_1\circ\omega_Q^\star=\omega_B^\star\circ\partial_2$ 
as maps from $\Gamma(Q_2^*)$ to $\Gamma(B_1)$, or equivalently
$\omega_Q\circ\partial_1^*=\partial_2^*\circ\omega_B$,
\item $\omega_Q$ preserves the dull brackets up to
  $\partial_2^*\omega_{12}$:
  i.e.~$\omega_Q^\star(\dr_2\tau)+\omega_{12}^\star(\partial_2\tau)=\dr_1(\omega_Q^\star\tau)$
  for all $\tau\in\Gamma(Q_2^*)$.
\item $\omega_B$ and $\omega_Q$ intertwines the connections 
$\nabla^1$ and $\nabla^2$ up to $\partial_1\circ\omega_{12}$:
\[\omega_B^\star((\omega_Q^\star\nabla^2)_{q}b)
=\nabla^1_{q}(\omega_B^\star(b))-\partial_1\circ\langle\omega_{12}(q,\cdot), b\rangle
\in\Gamma(B_1)
\]
for all $q_m\in Q_1$ and $b\in\Gamma(B^2)$, and 
\item
  $\omega_Q^\star\omega_{R_2}-\omega_B\circ\omega_{R_1}=-\dr_{(\omega_Q^\star\nabla^2)}\omega_{12}\in\Omega^3(Q_1,\omega_0^*B_2^*)$.
\end{enumerate}
We find so that $\Omega$ is a morphism of VB-Courant algebroid if and
only if it induces a morphism of split Lie $2$-algebroids after any
choice of Lagrangian decompositions of $\mathbb E_1$ and $\mathbb
E_2$.

\subsubsection{Equivalence of categories}
The functors \S\ref{recall_n} between the
category of metric double vector bundles and the category of
$[2]$-manifolds restrict to functors between the category of 
VB-Courant algebroids and the category of Lie $[2]$-algebroids.
\begin{theorem}\label{main2}
  The category of Lie $2$-algebroids is equivalent to the
  category of VB-Courant algebroids.
\end{theorem}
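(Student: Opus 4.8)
The plan is to show that the contravariant equivalence between $\operatorname{MDVB}$ and the category of $[2]$-manifolds obtained in \S\ref{recall_n} (Theorem \ref{main_crucial} composed with the dualisation of $\operatorname{IDVB}$ with $\operatorname{MDVB}$) restricts to an equivalence between VB-Courant algebroids and Lie $2$-algebroids. Since the underlying functors $\mathcal A$ and $\mathcal G$ are already quasi-inverse equivalences, it suffices to check that they are compatible with the extra structure: that a linear Courant algebroid structure on a metric double vector bundle $\mathbb E$ is exactly the datum of a homological vector field on the associated $[2]$-manifold $\mathcal A(\mathbb E)$, and that morphisms correspond.

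On objects I would argue in a Lagrangian splitting. Fix a Lagrangian splitting $\Sigma$ of $\mathbb E$, so that $\mathcal A(\mathbb E)$ becomes the split $[2]$-manifold $Q[-1]\oplus B^*[-2]$. By Theorem \ref{main}, the linear Courant algebroid structure on $\mathbb E$ is equivalent, via \eqref{VB_def1}, to a split Lie $2$-algebroid $(\rho_Q,\partial_B^*,\lb\cdot\,,\cdot\rb,\nabla,\omega)$ on $Q\oplus B^*$; and by the computation of \S\ref{explicit_Q} such a split Lie $2$-algebroid is precisely the data of a homological vector field $\mathcal Q$ on $Q[-1]\oplus B^*[-2]$. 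The essential point is that the resulting $\mathcal Q$ is independent of the chosen Lagrangian splitting, so that it descends to the intrinsic $[2]$-manifold $\mathcal A(\mathbb E)$. For this I would compare the transformation rules of Proposition \ref{change_of_lift}, recording how $(\Delta,\lb\cdot\,,\cdot\rb,\nabla,\omega)$ change under a change of Lagrangian decomposition $\phi\in\Gamma(Q^*\wedge Q^*\otimes B^*)$, with the change-of-splitting formulas for split Lie $2$-algebroids at the end of \S\ref{morphism_Lie_2}: these two lists coincide term by term (with $\mu_{12}=\phi$), so the two homological vector fields are intertwined by the induced automorphism of the split $[2]$-manifold and represent the same $\mathcal Q$ on $\mathcal A(\mathbb E)$. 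This defines the restricted functors on objects in both directions.

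On morphisms, a morphism of the underlying metric double vector bundles corresponds under $\mathcal A$ to a morphism $\mu$ of the associated $[2]$-manifolds whose structure maps $(\mu_Q,\mu_B,\mu_{12})$ are identified with the structure maps $(\omega_Q,\omega_B,\omega_{12})$ of $\Omega$ in any pair of Lagrangian splittings. It therefore remains to check that $\Omega$ is a morphism of VB-Courant algebroids (a Dirac structure with support in $\overline{\mathbb E_1}\times\mathbb E_2$) if and only if $\mu$ intertwines the homological vector fields, that is $\mu^\star\circ\mathcal Q_2=\mathcal Q_1\circ\mu^\star$. Both conditions are tested in splittings: the analysis of \S\ref{mor_VB_Courant} shows that $\Omega$ is Dirac exactly when $(\omega_Q,\omega_B,\omega_{12})$ satisfy the five conditions listed there, while the computation of \S\ref{morphism_Lie_2} shows that $\mu$ intertwines $\mathcal Q_1$ and $\mathcal Q_2$ exactly when $(\mu_Q,\mu_B,\mu_{12})$ satisfy conditions (1)--(3) derived there. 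Since these two lists are literally the same, $\Omega$ is a morphism of VB-Courant algebroids precisely when $\mu$ is a morphism of Lie $2$-algebroids; functoriality and the quasi-inverse property are then inherited from Theorem \ref{main_crucial} and the $\operatorname{IDVB}$--$\operatorname{MDVB}$ dualisation.

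The main obstacle is the splitting-independence in the object step: one must ensure that the chart-wise correspondence ``linear Courant structure $\leftrightarrow$ homological vector field'' glues into a single well-defined structure on the intrinsic objects $\mathbb E$ and $\mathcal A(\mathbb E)$. Everything hinges on the exact matching of Proposition \ref{change_of_lift} with the change-of-splitting formulas of \S\ref{morphism_Lie_2}, together with the coincidence of the two lists of morphism conditions; once this bookkeeping is confirmed, the theorem follows formally from the already-established underlying equivalence of categories.
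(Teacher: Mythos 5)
Your proposal is correct and takes essentially the same route as the paper's proof: the paper likewise restricts the functors of Theorem \ref{main_crucial}, uses Theorem \ref{main} in a Lagrangian splitting to pass between linear Courant structures and homological vector fields, secures splitting-independence by matching Proposition \ref{change_of_lift} with the change-of-splitting formulas at the end of \S\ref{morphism_Lie_2}, and identifies the morphism conditions of \S\ref{mor_VB_Courant} with those of \S\ref{mor_lie_2_alg}. The only cosmetic difference is that you phrase the object step mainly in the direction $\mathbb E\mapsto \mathcal A(\mathbb E)$, whereas the paper starts from a Lie $2$-algebroid and applies the geometrisation functor $\mathcal G$; the content is identical.
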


\begin{proof}
  Let $(\mathcal M, \mathcal Q)$ be a Lie $2$-algebroid and consider
  the double vector bundle $\mathbb E_{\mathcal M}$ corresponding to
  $\mathcal M$.  Choose a splitting $\mathcal M\simeq Q[-1]\oplus
  B^*[-2]$ of $\mathcal M$ and consider the corresponding Lagrangian
  splitting $\Sigma$ of $\mathbb E_{\mathcal M}$.

  By Theorem \ref{main}, the split Lie
  $2$-algebroid $(Q[-1]\oplus B^*[-2],\mathcal Q)$
   defines a VB-Courant algebroid structure on the
  decomposition of $\mathbb E_{\mathcal M}$ and so by isomorphism on
  $\mathbb E_{\mathcal M}$. Further, by Proposition
  \ref{change_of_lift}, the Courant
  algebroid structure on $\mathbb E_{\mathcal M}$ does not depend on
  the choice of splitting of $\mathcal M$, since a different choice of
  splitting will induce a change of Lagrangian splitting of $\mathbb
  E_{\mathcal M}$.  This shows that the functor $\mathcal G$ restricts
  to a functor $\mathcal G_{Q}$ from the category of Lie
  $2$-algebroids to the category of VB-Courant algebroids.

 Sections \ref{mor_lie_2_alg} and \ref{mor_VB_Courant}
show that morphisms of split Lie $2$-algebroids are 
sent by $\mathcal G$ to morphisms of decomposed VB-Courant algebroids.

The functor $\mathcal F$ restricts in a similar manner to a functor
$\mathcal F_{\rm VBC}$ from the category of VB-Courant algebroids to the
category of Lie $2$-algebroids. The natural transformations found
in the proof of Theorem \ref{main_crucial} restrict to natural
transformations $\mathcal F_{\rm VBC}\mathcal G_{Q}\simeq \Id$ and
$\mathcal G_{Q}\mathcal F_{\rm VBC}\simeq \Id$.
\end{proof}

\begin{remark}
  Note that we use splittings and decompositions in order to obtain
  this equivalence of categories, which does not involve splittings
  and decompositions.

  First, while the linear metric of the linear VB-Courant algebroid
  are at the heart of the equivalence of the underlying (metric)
  double vector bundle $(\mathbb E; B,Q;M)$ with the underlying
  $[2]$-manifold of the corresponding Lie $2$-algebroid, the linear
  Courant bracket and the linear anchor do not translate to very
  elegant structures on the linear isotropic sections of $\mathbb E\to
  Q$ and on its core sections. Only in a decomposition, the
  ingredients of the linear bracket and anchor are recognised in a
  straightforward manner as the ingredients of a split lie
  $2$-algebroid.

  Since our main goal was to show that, as decomposed VB-algebroids
  are the same as $2$-representations \cite{GrMe10a}, decomposed
  VB-Courant algebroids are the same as split Lie $2$-algebroids, it
  is natural for us to establish here our equivalence in
  decompositions and splittings. The main work for the `splitting
  free' version of the equivalence was done in \cite{Jotz17a}. Another
  approach can of course be found in \cite{Li-Bland12}, but the
  equivalence there is not really constructive, in the sense that it
  is difficult to even recognise the graded functions on the
  underlying $[2]$-manifold as sections of the metric double vector
  bundle. To our understanding, the equivalence of $[2]$-manifolds
  with metric double vector bundles is not easy to recognise in the
  proof of \cite{Li-Bland12}. 

  Further, our main application in Section \ref{double} is a statement
  about a certain class of \emph{decomposed} VB-Courant algebroids
  versus \emph{split} Lie $2$-algebroids. Similarly, in a sequel of
  this paper \cite{Jotz17c}, we work exclusively with decomposed or
  split objects to express Li-Bland's definition of an LA-Courant
  algebroid \cite{Li-Bland12} in a decomposition. This yields a new
  definition that involves the `matched pair' of a split Lie
  $2$-algebroid with a self-dual $2$-representation. This new approach
  is far more useful for concrete computations, since there is no need
  anymore to consider the tangent triple vector bundle of $\mathbb E$
  (see \cite{Li-Bland12}).
\end{remark}

\section{VB-bialgebroids and bicrossproducts of matched pairs of
  2-representations}\label{double}
In this section we show that the bicrossproduct of a matched pair of
2-representations is a split Lie 2-algebroid and we geometrically explain
this result.

\subsection{The bicrossproduct of a matched pair of $2$-representations}\label{bicross}
We construct a split Lie 2-algebroid $(A\oplus B)\oplus C$ induced by
a matched pair of 2-representations as in Definition
\ref{matched_pair_2_rep}.  The vector bundle $A\oplus B\to M$ is
anchored by $\rho_A\circ\pr_A+\rho_B\circ\pr_B$ and paired with $A^*\oplus B^*$ as
follows:
\[\langle (a,b),(\alpha,\beta)\rangle=\alpha(a)-\beta(b)
\]
for all $a\in\Gamma(A)$, $b\in\Gamma(B)$, $\alpha\in\Gamma(A^*)$ and
$\beta\in\Gamma(B^*)$. The morphism $A^*\oplus B^*\to C^*$ is
$\partial_A^*\circ\pr_{A^*}+\partial_B^*\circ\pr_{B^*}$. The $A\oplus B$-Dorfman connection
on $A^*\oplus B^*$ is defined by
\[\Delta_{(a,b)}(\alpha, \beta)=(\nabla^*_b\alpha+\ldr{a}\alpha-\langle\nabla_\cdot
b,\beta\rangle, \nabla^*_a\beta+\ldr{b}\beta-\langle \nabla_\cdot
a,\alpha\rangle).
\]
The dual dull bracket on $\Gamma(A\oplus B)$ is
\begin{equation}\label{bicross_bracket}
\lb (a,b), (a',b')\rb=([a,a']+\nabla_ba'-\nabla_{b'}a,
[b,b']+\nabla_ab'-\nabla_{a'}b).
\end{equation}
The $A\oplus B$-connection on $C^*$ is simply given by
$\nabla_{(a,b)}^*\gamma=\nabla_a^*\gamma+\nabla_b^*\gamma$ and the
dual connection is $\nabla\colon \Gamma(A\oplus
B)\times\Gamma(C)\to\Gamma(C)$, 
\begin{equation}\label{nabla_bicross_C}
\nabla_{(a,b)}c=\nabla_ac+\nabla_bc.
\end{equation}
Finally, the form $\omega=\omega\in\Omega^3(A\oplus B,C)$ is given by
\begin{equation}\label{omega_matched}
\begin{split}
  \omega_R((a_1,b_1),(a_2,b_2),(a_3,b_3))&=R(a_1,a_2)b_3+R(a_2,a_3)b_1+R(a_3,a_1)b_2\\
  &\qquad -R(b_1,b_2)a_3-R(b_2,b_3)a_1-R(b_3,b_1)a_2.
\end{split}
\end{equation}
The
vector bundle $(A\oplus B)\oplus C\to M$ with the anchor
$\rho_A\circ\pr_A+\rho_B\circ\pr_B\colon A\oplus B\to TM$,
$l=(-\partial_A;\partial_B)\colon C\to A\oplus B$, $\omega_R$
and the skew-symmetric dull bracket \eqref{bicross_bracket} define a
split Lie 2-algebroid.  Moreover, we prove the following theorem:
\begin{theorem}\label{double_2_rep}
  The bicrossproduct of a matched pair of 2-representations is a split Lie
  2-algebroid with the structure given above.
  Conversely if $(A\oplus B)\oplus C$ has a split Lie 2-algebroid
  structure such that 
\begin{enumerate}
\item $\lb (a_1,0),(a_2,0)\rb=([a_1,a_2],0)$
with a section $[a_1,a_2]\in\Gamma(A)$ for all $a_1,a_2\in\Gamma(A)$
and in the same manner $\lb (0,b_1),(0,b_2)\rb=(0, [b_1,b_2])$
with a section $[b_1,b_2]\in\Gamma(B)$ for all $b_1,b_2\in\Gamma(B)$, and
\item $\omega((a_1,0), (a_2,0), (a_3,0))=0$ and $\omega((0, b_1), (0,b_2), (0,b_3))=0$
for all $a_1,a_2,a_3$ in $\Gamma(A)$ and $b_1,b_2,b_3$ in $\Gamma(B)$,
\end{enumerate} then $A$ and $B$ are Lie subalgebroids of $(A\oplus
B)\oplus C$ and $(A\oplus B)\oplus C$ is the bicrossproduct of a
matched pair of $2$-representations of $A$ on $B\oplus C$ and of $B$
on $A\oplus C$. The 2-representation of $A$ is given by
\begin{equation}\label{2_rep_A}
\begin{split}
\partial_B(c)&=\pr_B(l(c)),\,\, 
\nabla_ab=\pr_B\lb (a,0), (0,b)\rb,\,\,
\nabla_ac=\nabla_{(a,0)}c,\\
 R_{AB}(a_1,a_2)b&=\omega(a_1,a_2,b)
\end{split}
\end{equation}
and the $B$-representation is given by 
\begin{equation}
\begin{split}\label{2_rep_B}
\partial_A(c)&=-\pr_A(l(c)),\,\,
\nabla_ba=\pr_A\lb(0,b),(a,0)\rb,\,\,\nabla_bc=\nabla_{(0,b)}c,\\
 R_{BA}(b_1,b_2)a&=-\omega(b_1,b_2,a).
\end{split}
\end{equation}
\end{theorem}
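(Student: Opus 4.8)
The plan is to prove both implications from a single \emph{dictionary} matching the five axioms (i)--(v) of Definition~\ref{split_Lie2} with the conditions (M1)--(M7) and the defining axioms of the two constituent $2$-representations. Because the dull bracket \eqref{bicross_bracket}, the map $l=(-\partial_A;\partial_B)$, the connection \eqref{nabla_bicross_C} and the form $\omega_R$ of \eqref{omega_matched} are all assembled from an $A$-part and a $B$-part, every split Lie $2$-algebroid axiom decomposes into components obtained by feeding in sections of $A$ and of $B$ separately. My first step is therefore to record these component expressions once and for all; the two directions of the theorem then amount to reading the resulting identities forwards or backwards.

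For the forward direction I would dispatch the low-order axioms first. Substituting $l(c)=(-\partial_Ac,\partial_Bc)$ and $\nabla_{(a,b)}=\nabla_a+\nabla_b$ into axiom (i) turns it immediately into (M1). Axiom (ii), $\lb q,l(c)\rb=l(\nabla^*_qc)$, splits into four components according as $q\in\Gamma(A)$ or $q\in\Gamma(B)$ and which summand of $A\oplus B$ one projects onto; the two `off-diagonal' components are precisely (M2) and (M3), while the two `diagonal' components are the $2$-representation compatibilities $\partial_B\circ\nabla_a=\nabla_a\circ\partial_B$ and $\partial_A\circ\nabla_b=\nabla_b\circ\partial_A$. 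Axiom (iv), the curvature identity, dualises to an identity for the curvature of $\nabla$ on $C$ (Definition~\ref{split_Lie2} carries the connection on $C^*$): its evaluation on one $A$- and one $B$-section is (M4), and its pure evaluations yield the curvature relations $R_{\nabla^C}=R_{AB}\circ\partial_B$ (and the $B$-analogue) internal to the two $2$-representations.

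The two laborious axioms are (iii) and (v). Expanding $\Jac_{\lb\cdot\,,\cdot\rb}$ of \eqref{bicross_bracket} produces pure and mixed contributions: on pure $A$- and pure $B$-triples the right-hand side $l\circ\omega_R$ vanishes by \eqref{omega_matched}, so these impose the Jacobi identities of $A$ and $B$; the mixed triples $(a_1,a_2,b)$ and $(a,b_1,b_2)$ give (M5) and (M6) in one component and the remaining $2$-representation curvature relations $R_{\nabla^B}=\partial_B\circ R_{AB}$, $R_{\nabla^A}=\partial_A\circ R_{BA}$ in the other. Axiom (v), $\dr_{\nabla^*}\omega_R=0$, stratifies by bidegree in $A$ and $B$: on $(a_1,a_2,a_3,b)$ and $(b_1,b_2,b_3,a)$ it is the closedness $\dr R_{AB}=0=\dr R_{BA}$ of each $2$-representation, and on $(a_1,a_2,b_1,b_2)$ it is exactly (M7), whose two sides indeed lie in the common space $\Omega^2(A,\wedge^2B^*\otimes C)=\Omega^2(B,\wedge^2A^*\otimes C)$. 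For the converse I would read this dictionary backwards: \eqref{2_rep_A}--\eqref{2_rep_B} \emph{define} $\partial_A,\partial_B$, the four connections, the brackets on $A$ and $B$ (which by hypothesis (1) are honest $A$- and $B$-valued brackets), and $R_{AB},R_{BA}$; then (M1)--(M7) and the internal $2$-representation axioms are obtained by evaluating (i)--(v) on the various pure, semi-mixed and mixed combinations. Hypothesis (2) is precisely the input ensuring $\omega_R$ has no pure $A$- or pure $B$-component, so that axiom (iii) on pure triples reads $\Jac=l(0)=0$ and exhibits $A$ and $B$ as Lie subalgebroids.

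The step I expect to fight hardest with is axiom (iii): the Jacobiator of the bicrossproduct bracket has the most summands, and recombining the mixed $A/B$-terms---the $\nabla_{\nabla_b a}$, $\nabla_{\nabla_a b}$ and bracket-of-bracket pieces---into $l\circ\omega_R$ demands meticulous sign control, aggravated by the nonstandard pairing $\langle(a,b),(\alpha,\beta)\rangle=\alpha(a)-\beta(b)$. As a conceptual cross-check I would sketch the geometric route promised in the introduction: the matched pair corresponds to a double Lie algebroid $D$, for which $(D\duer A,D\duer B)$ is a Lie bialgebroid over $C^*$ and hence $D\duer A\oplus_{C^*}D\duer B$ is a VB-Courant algebroid; applying Theorem~\ref{main} to the Lagrangian splitting coming from a linear decomposition of $D$ then yields a split Lie $2$-algebroid, which one matches termwise with the bicrossproduct above, deducing (i)--(v) from Theorem~\ref{main} rather than checking them by hand.
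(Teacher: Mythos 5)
Your plan coincides with the paper's own proof: the paper establishes exactly the component-by-component dictionary you describe --- (i)$\leftrightarrow$(M1), (ii)$\leftrightarrow$(M2)+(M3), (iv)$\leftrightarrow$(M4), (iii)$\leftrightarrow$(M5)+(M6) together with the Jacobi identities of $A$ and $B$, and (v)$\leftrightarrow$(M7) via the bidegree decomposition of $\dr_\nabla\omega$ (the identity \eqref{dl3_van1} and its $(2,2)$-analogue) --- with the pure evaluations supplying the internal $2$-representation axioms just as you predict, and in the converse it first uses hypotheses (1), (2) to exhibit $A$ and $B$ as Lie subalgebroids before reading the dictionary backwards. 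Your proposed geometric cross-check via Theorem \ref{main} is precisely the paper's separate explanation in Theorem \ref{bij_da_matched_lie_2}, not part of its proof of this statement, so your assessment of its role is also accurate.
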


\begin{proof}
  Assume first that $(A\oplus B)\oplus C$ is a split Lie 2-algebroid
  with (1) and (2). The bracket
  $[\cdot\,,\cdot]\colon\Gamma(A)\times\Gamma(A)\to\Gamma(A)$ defined
  by $\lb (a_1,0), (a_2,0)\rb=([a_1,a_2],0)$ is obviously
  skew-symmetric and $\R$-bilinear. Define an anchor $\rho_A$ on $A$
  by $\rho_A(a)=\rho_{A\oplus B}(a,0)$. Then we get immediately
  \[([a_1,fa_2],0)=\lb (a_1,0),
  f(a_2,0)\rb=f([a_1,a_2],0)+\rho_{A\oplus B}(a_1,0)(f)(a_2,0),\]
  which shows that $[a_1,fa_2]=f[a_1,a_2]+\rho_A(a_1)(f)a_2$ for all
  $a_1,a_2\in\Gamma(A)$. Further, we find
  $\Jac_{[\cdot\,,\cdot]}(a_1,a_2,a_3)=\pr_A(\Jac_{\lb\cdot\,,\cdot\rb}((a_1,0),(a_2,0),(a_3,0)))=-(\pr_A\circ
  l\circ \omega )((a_1,0),(a_2,0),(a_3,0)))=0$ since $\omega$ vanishes on
  sections of $A$.  Hence $A$ is a wide subalgebroid of the split Lie
  $2$-algebroid. In a similar manner, we find a Lie algebroid
  structure on $B$. Next we prove that \eqref{2_rep_A} defines a
  $2$-representation of $A$.
Using (ii) in Definition \ref{split_Lie2} we find for $a\in\Gamma(A)$ and $c\in\Gamma(C)$ that 
\begin{equation*}
\begin{split}
\partial_B(\nabla_ac)&=(\pr_B\circ l)(\nabla_{(a,0)}c)\overset{\rm (ii)}{=}\pr_B\lb (a,0),l(c)\rb\\
&=\pr_B\lb (a,0),(0,\pr_B(l(c)))\rb=\nabla_a(\partial_Bc).
 \end{split}
\end{equation*}
In the third equation we have used Condition (1) and in the last
equation the definitions of $\partial_B$ and
$\nabla_a\colon\Gamma(B)\to\Gamma(B)$.
In the following, we will write for simplicity $a$ for $(a,0)\in\Gamma(A\oplus B)$, etc.
We get easily
\begin{equation*}
\begin{split}
R_{AB}(a_1,a_2)\partial_Bc=\omega(a_1,a_2,\pr_B(l(c)) )=\omega(a_1,a_2,l(c))\overset{\rm (iv)}{=}R_\nabla(a_1,a_2)c
 \end{split}
\end{equation*}
and 
\[\partial_BR_{AB}(a_1,a_2)b=(\pr_B\circ l\circ \omega)(a_1,a_2,b)\overset{\rm
  (iii)}{=}-\pr_B(\Jac_{\lb\cdot\,,\cdot\rb}(a_1,a_2,b))
\]
for all $a_1,a_2\in\Gamma(A)$, $b\in\Gamma(B)$ and $c\in\Gamma(C)$.
By Condition (1) and the definition of $\nabla_a\colon\Gamma(B)\to\Gamma(B)$,
we find \begin{equation*}
\begin{split}
R_\nabla(a_1,a_2)b&=\pr_B\lb a_1,\lb a_2,b\rb\rb-\pr_B\lb a_2\lb a_1, b\rb\rb
-\pr_B\lb\lb a_1,a_2\rb, b\rb\\
&=-\pr_B(\Jac_{\lb\cdot\,,\cdot\rb}(a_1,a_2,b)).
 \end{split}
\end{equation*}
Hence, $\partial_BR_{AB}(a_1,a_2)b=R_\nabla(a_1,a_2)b$. Finally, an easy
computation along the same lines shows that
\begin{equation}\label{dl3_van1}
\langle (\dr_{\nabla^{\operatorname{Hom}}}R_{AB})(a_1,a_2,a_3), b\rangle=(\dr_{\nabla}\omega)(a_1,a_2,a_3,b)
\end{equation}
for $a_1,a_2,a_3\in\Gamma(A)$ and $b\in\Gamma(B)$.  Since
$\dr_{\nabla}\omega=0$, we find
$\dr_{\nabla^{\operatorname{Hom}}}R_{AB}=0$. In a similar manner, we
prove that \eqref{2_rep_B} defines a $2$-representation of $B$.
Further, by construction of the $2$-representations, the split Lie
$2$-algebroid structure on $(A\oplus B)\oplus C)$ must be defined as
in \eqref{bicross_bracket}, \eqref{nabla_bicross_C} and
\eqref{omega_matched}, with the anchor
$\rho_A\circ\pr_A+\rho_B\circ\pr_B$ and
$l=(-\partial_A,\partial_B)$. Hence, to conclude the proof, it only
remains to check that the split Lie $2$-algebroid conditions for these
objects are equivalent to the seven conditions in Definition
\ref{matched_pair_2_rep} for the two $2$-representations.

First, we find immediately that (M1) is equivalent to (i).
Then we find by construction
\[[a,\partial_Ac]+\nabla_{\partial_Bc}a=-[a,\pr_A(l(c))]+\nabla_{\pr_B(l(c))}a=\pr_A\lb l(c),a \rb=-\pr_A \lb a, l(c)\rb.
\]
Hence, we find (M2) if and only if 
$\pr_A \lb a, l(c)\rb=\pr_A\circ l(\nabla_ac)$. But since 
\begin{equation*}
\begin{split}
\lb a, lc\rb&=(\pr_A \lb a, l(c)\rb, \nabla_a\pr_Bl(c))=(\pr_A \lb a, l(c)\rb, \nabla_a\partial_B(c))\\
&=(\pr_A \lb a, l(c)\rb,\partial_B\nabla_ac)=(\pr_A \lb a, l(c)\rb,\pr_B(l(\nabla_ac))),
\end{split}
\end{equation*}
we have $\pr_A \lb a, l(c)\rb=\pr_A\circ l(\nabla_ac)$ if and only
if $\lb a, lc\rb=l(\nabla_ac)$. Hence (M2) is satisfied if and
only if $\lb a, l(c)\rb=l(\nabla_ac)$ for all $a\in\Gamma(A)$ and
$c\in\Gamma(C)$.  In a similar manner, we find that (M3) is equivalent
to $\lb b, lc\rb=l(\nabla_bc)$ for all $b\in\Gamma(B)$ and
$c\in\Gamma(C)$. This shows that (M2) and (M3) together are equivalent to (ii).

Next, a simple computation shows that (M4) is equivalent to
$R_\nabla(b,a)c=\omega(b,a,l(c))$. Since
$R_\nabla(a,a')c=R_{AB}(a,a')\partial_Bc=\omega(a,a',\pr_B(l(c)))=\omega(a,a',l(c))$
and $R_\nabla(b,b')c=\omega(b,b',l(c))$, we get that (M4) is equivalent
to (iv).

Two straightwforward computations show that (M5) is equivalent to
\[\pr_A(\Jac_{\lb\cdot\,,\cdot\rb}(a_1,a_2,b))=-\pr_A(l\omega(a_1,a_2,b))\]
and that (M6) is equivalent to
\[\pr_B(\Jac_{\lb\cdot\,,\cdot\rb}(b_1,b_2,a))=-\pr_B(l\omega(b_1,b_2,a)).\]
But since $\pr_B(\Jac_{\lb\cdot\,,\cdot\rb}(a_1,a_2,b))=-R_\nabla(a_1,a_2)b$
by construction and $R_\nabla(a_1,a_2)b=\partial_BR_{AB}(a_1,a_2)b=\pr_B(l\omega(a_1,a_2,b))$,
we find \[\pr_B(\Jac_{\lb\cdot\,,\cdot\rb}(a_1,a_2,b))=-\pr_B(l\omega(a_1,a_2,b)),\] and in a similar 
manner \[\pr_A(\Jac_{\lb\cdot\,,\cdot\rb}(b_1,b_2,a))=-\pr_A(l\omega(b_1,b_2,a)).\]
Since $\Jac_{\lb\cdot\,,\cdot\rb}(a_1,a_2,a_3)=0$, $\Jac_{\lb\cdot\,,\cdot\rb}(b_1,b_2,b_3)=0$, and 
$\omega$ vanishes on sections of $A$, and respectively on sections of $B$,
we conclude that (M5) and (M6) together are equivalent to (iii).

Finally, a slightly longer, but still straightforward computation
shows that
\[(\dr_{\nabla^B}R_{AB})(b_1,b_2)(a_1,a_2)-(\dr_{\nabla^A}R_{BA})(a_1,a_2)(b_1,b_2)=(\dr_\nabla \omega)(a_1,a_2,b_1,b_2)
\]
for all $a_1,a_2\in\Gamma(A)$ and $b_1,b_2\in\Gamma(B)$. This,
\eqref{dl3_van1}, the corresponding identity for $R_{BA}$, and the
vanishing of $\omega$ on sections of $A$, and respectively on sections of
$B$, show that (M7) is equivalent to (v).
\end{proof}

If $C=0$, then $R_{AB}=0$, $R_{BA}=0$, $\partial_A=0$ and
$\partial_B=0$ and the matched pair of 2-representations is just a
matched pair of Lie algebroids.  The double is then concentrated in
degree $0$, with $\omega=0$ and $l_2$ is the bicrossproduct Lie algebroid
structure on $A\oplus B$ with anchor $\rho_A+\rho_B$
\cite{Lu97,Mokri97}. Hence, in that case the split Lie 2-algebroid is
just the bicrossproduct of a matched pair of representations and the
dual (flat) Dorfman connection is the corresponding Lie
derivative. The Lie $2$-algebroid is in that case a genuine Lie
$1$-algebroid.

In a current work in progress we define the notion of matched pair of
higher representations up to homotopy, and we show that the induced
doubles are split Lie n-algebroids.

In the case where $B$ has a trivial Lie algebroid structure and acts
trivially up to homotopy on $\partial_A=0\colon C\to A$, the double is
the semi-direct product Lie 2-algebroid found in \cite[Proposition
3.5]{ShZh17} (see \S\ref{semi-direct}).

\subsection{VB-bialgebroids and double Lie
  algebroids}\label{VB_courant_from_double}
Consider a double vector bundle $(D;A,B;M)$ with core $C$ and a VB-Lie
algebroid structure on each of its sides.  Recall from \S
\ref{matched_pair_2_rep_sec} that $(D;A,B,M)$ is a double Lie
algebroid if and only if, for any linear splitting of $D$, the two
induced $2$-representations (denoted as in
\S\ref{matched_pair_2_rep_sec}) form a matched pair \cite{GrJoMaMe17}. By definition of
a double Lie algebroid, $(D\duer A, D\duer B)$ is then a Lie
bialgebroid over $C^*$ \cite{Mackenzie11}, and so the double vector
bundle
\begin{equation*}
\begin{xy}
  \xymatrix{D\duer A\oplus D\duer B\ar[r]\ar[d]&C^*\ar[d]\\
    A\oplus B\ar[r]&M }
\end{xy}
\end{equation*}
with core $B^*\oplus A^*$ has the structure of a VB-Courant algebroid
with base $C^*$ and side $A\oplus B$.  Note that we call the pair
$(D\duer A, D\duer B)$ a \textbf{VB-bialgebroid over $C^*$}.
Conversely, a VB-Courant algebroid $(\mathbb E; Q,B;M)$ with two
transverse VB-Dirac structures $(D_1;Q_1,B;M)$ and $(D_2;Q_2,B;M)$
defines a VB-bialgebroid $(D_1,D_2)$ over $B$. It is not difficult to
see that a VB-bialgebroid\footnote{$D_A$ has necessarily core $B^*$
  and $D_B$ has core $A^*$.} $(D_A\to X, A\to M)$, $(D_B\to X, B\to
M)$ is equivalent to a double Lie algebroid structure on $((D_A)\duer A;
B, A;M)\simeq ((D_B)\duer B; B, A;M)$ with core $X^*$.

Consider again a double Lie algebroid $(D;A,B;M)$, together with a
linear splitting $\Sigma\colon A\times_MB\to
D$. Then \[\tilde\Sigma\colon (A\oplus B)\times_M C^*\to D\duer
A\oplus D\duer B\] defined by
$\tilde\Sigma((a(m),b(m)),\gamma_m)=(\sigma_A^\star(a)(\gamma_m),
\sigma_B^\star(b)(\gamma_m))$, where $\sigma_A^\star\colon
\Gamma(A)\to \Gamma_{C^*}^l(D\duer A)$ and $\sigma_B^\star\colon
\Gamma(B)\to \Gamma_{C^*}^l(D\duer B)$ are defined as in Appendix
\ref{appendix_dual}, is a linear Lagrangian splitting of $D\duer
A\oplus D\duer B$ (see \eqref{equal_fat}).  Recall from
\S\ref{matched_pair_2_rep_sec} that the splitting $\Sigma^\star\colon
A\times_{M}C^*\to D\duer A$ of the VB-algebroid $(D\duer A\to C^*,
A\to M)$ corresponds to the $2$-representation
$({\nabla^C}^*,{\nabla^B}^*,-R^*) $ of $A$ on the complex
$\partial_B^*\colon B^*\to C^*$. In the same manner, the splitting
$\Sigma^\star\colon B\times_{M}C^*\to D\duer B$ of the VB-algebroid
$(D\duer B\to C^*, B\to M)$ corresponds to the $2$-representation
$({\nabla^C}^*,{\nabla^A}^*,-R^*) $ of $B$ on the complex
$\partial_A^*\colon A^*\to C^*$.

We check that the split Lie 2-algebroid corresponding to the linear
splitting $\tilde\Sigma$ of $D\duer A\oplus D\duer B$ is the
bicrossproduct of the matched pair of 2-representations. The
equalities in \eqref{equal_fat} imply that we have to consider
$A\oplus B$ as paired with $A^*\oplus B^*$ in the non standard way:
\[\langle (a,b),(\alpha,\beta)\rangle=\alpha(a)-\beta(b)
\]
for all $a\in\Gamma(A)$, $b\in\Gamma(B)$, $\alpha\in\Gamma(A^*)$ and
$\beta\in\Gamma(B^*)$.  The anchor of
$\tilde\sigma(a,b)=(\sigma^\star(a),\sigma^\star(b))$ is
$\widehat{\nabla_a^*}+\widehat{\nabla_b^*}\in\mx^l(C^*)$, and the
anchor of
$(\alpha,\beta)^\dagger=(\beta^\dagger,\alpha^\dagger)\in\Gamma_{C^*}^c(D\duer
A\oplus D\duer B)$ is
$(\partial_B^*\beta+\partial_A^*\alpha)^\uparrow\in\mx^c(C^*)$.  The
Courant bracket $\left\lb (\sigma^\star_A(a),\sigma_B^\star(b)),
  (\beta^\dagger,\alpha^\dagger)\right\rb $ is
\[\left([\sigma_A^\star(a),\beta^\dagger]+\ldr{\sigma_B^\star(b)}\beta^\dagger-\ip{\alpha^\dagger}\dr_{D\duer B}\sigma_A^\star(a),
  [\sigma_B^\star(b),\alpha^\dagger]+\ldr{\sigma_A^\star(a)}\alpha^\dagger-\ip{\beta^\dagger}\dr_{D\duer
    A}\sigma_B^\star(b) \right),
\]
where $\dr_{D\duer A}\colon\Gamma_{C^*}(\bigwedge^\bullet D\duer B)\to
\Gamma_{C^*}(\bigwedge^{\bullet+1} D\duer B)$ is defined as usual by
the Lie algebroid $D\duer A$, and similarly for $D\duer B$ (bear in
mind that some non standard signs arise from the signs in
\eqref{equal_fat}).  The derivation $\ldr{}\colon \Gamma(D\duer
A)\times \Gamma(D\duer B)\to \Gamma(D\duer B)$ is described by
\begin{equation*}
\begin{split}
\ldr{\beta^\dagger}\alpha^\dagger&=0,\quad 
\ldr{\beta^\dagger}\sigma_B^\star(b)=-\langle b,
\nabla^*_\cdot\beta\rangle^\dagger,\quad 
\ldr{\sigma_A^\star(a)}\alpha^\dagger=\ldr{a}\alpha^\dagger,\\
&\qquad 
\ldr{\sigma_A^\star(a)}\sigma_B^\star(b)=\sigma_B^\star(\nabla_ab)+\widetilde{R(a,\cdot)b},
\end{split}
\end{equation*}
in \cite[Lemma 4.8]{GrJoMaMe17}. Similar formulae hold for 
$\ldr{}\colon \Gamma(D\duer B)\times \Gamma(D\duer A)\to \Gamma(D\duer A)$.
We get
\[\left\lb (\sigma^\star_A(a),\sigma_B^\star(b)), (\beta^\dagger,\alpha^\dagger)\right\rb
=\left((\nabla_a^*\beta+\ldr{b}\beta-\langle\nabla_\cdot
  a,\alpha\rangle)^\dagger,
  (\nabla_b^*\alpha+\ldr{a}\alpha-\langle\nabla_\cdot
  b,\beta\rangle)^\dagger \right).
\]
In the same manner, we get
\begin{equation*}
\begin{split}
  &\left\lb (\sigma^\star_A(a_1),\sigma_B^\star(b_1)),
    (\sigma^\star_A(a_2),\sigma_B^\star(b_2))\right\rb\\
&  =\left(\sigma_A^\star([a,a']+\nabla_ba'-\nabla_{b'}a),
    \sigma_B^\star([b,b']+\nabla_ab'-\nabla_{a'}b)\right)\\
  &\qquad+\Bigl(-\widetilde{R(a_1,a_2)}+\widetilde{R(b_1,\cdot)a_2}-\widetilde{R(b_2,\cdot)a_1},
  -\widetilde{R(b_1,b_2)}+\widetilde{R(a_1,\cdot)b_2}-\widetilde{R(a_2,\cdot)b_1}\Bigr).
\end{split}
\end{equation*}

Hence we have the following result. Recall that we have found above
that double Lie algebroids are equivalent to VB-Courant algebroids
with two transverse VB-Dirac structures.
\begin{theorem}\label{bij_da_matched_lie_2}
  The correspondence established in Theorem \ref{main}, between
  decomposed VB-Courant algebroids and split Lie $2$-algebroids,
  restricts to a correspondence between decomposed double Lie
  algebroids and split Lie 2-algebroids that are the bicrossproducts
  of matched pairs of 2-representations.

  In other words, decomposed VB-bialgebroids are equivalent to matched
  pairs of 2-representations.
\end{theorem}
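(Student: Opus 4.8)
The plan is to assemble the explicit bracket computation carried out above with the categorical correspondence of Theorem~\ref{main} and the characterisation of bicrossproducts in Theorem~\ref{double_2_rep}. For the forward direction I would fix a decomposed double Lie algebroid $(D;A,B;M)$ with core $C$ together with a linear splitting $\Sigma\colon A\times_M B\to D$. By the results recalled in \S\ref{matched_pair_2_rep_sec} this splitting produces a matched pair of $2$-representations, while by the definition of a double Lie algebroid the pair $(D\duer A, D\duer B)$ is a Lie bialgebroid over $C^*$ \cite{Mackenzie11}; hence $\mathbb E:=D\duer A\oplus D\duer B$ is a VB-Courant algebroid over $C^*$ with side $A\oplus B$ and core $B^*\oplus A^*$. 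The splitting $\Sigma$ induces the Lagrangian splitting $\tilde\Sigma$ constructed above, and by Theorem~\ref{main} this Lagrangian splitting determines a split Lie $2$-algebroid on $(A\oplus B)\oplus C$. The explicit anchor and bracket computations performed just before the theorem then identify all of its structure maps with those of the bicrossproduct of \S\ref{bicross}: the side-anchor $\rho_A\circ\pr_A+\rho_B\circ\pr_B$, the core-anchor $l=(-\partial_A,\partial_B)$, the dull bracket \eqref{bicross_bracket}, the connection \eqref{nabla_bicross_C} and the form \eqref{omega_matched}, under the nonstandard pairing $\langle(a,b),(\alpha,\beta)\rangle=\alpha(a)-\beta(b)$ forced by \eqref{equal_fat}. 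This shows that the correspondence of Theorem~\ref{main} sends a decomposed double Lie algebroid to the bicrossproduct of its matched pair.

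For the converse I would start from a split Lie $2$-algebroid of bicrossproduct type on $(A\oplus B)\oplus C$ and recover a double Lie algebroid. By Theorem~\ref{double_2_rep}, such split Lie $2$-algebroids are exactly those satisfying conditions~(1) and~(2) there, which say precisely that $A$ and $B$ are (wide) Lie subalgebroids. Under the correspondence of Theorem~\ref{main}, each of these sub-Lie-$2$-algebroids translates into an isotropic, involutive subbundle of $\mathbb E$, i.e.\ a VB-Dirac structure; the two resulting VB-Dirac structures are $D\duer A$ and $D\duer B$, and they are transverse with $\mathbb E=D\duer A\oplus D\duer B$. As recalled in this subsection, a VB-Courant algebroid carrying two such transverse VB-Dirac structures is exactly a VB-bialgebroid, equivalently \cite{Mackenzie11} a double Lie algebroid structure on the associated double vector bundle with core $C$. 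Composing the two directions, and using once more Theorem~\ref{double_2_rep} to pass between bicrossproduct split Lie $2$-algebroids and matched pairs, yields the equivalence of decomposed VB-bialgebroids with matched pairs of $2$-representations.

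The step I expect to be the main obstacle is checking cleanly that the bicrossproduct conditions~(1) and~(2) of Theorem~\ref{double_2_rep}---the vanishing of the off-diagonal brackets and of $\omega$ on $A$ and on $B$---correspond under Theorem~\ref{main} to the subbundles $D\duer A$ and $D\duer B$ being \emph{isotropic and involutive}, hence Dirac. Their transversality and the splitting $\mathbb E=D\duer A\oplus D\duer B$ should then reflect the complementarity of $A$ and $B$ inside the side $A\oplus B$. Once this dictionary between ``$A,B$ are subalgebroids on which $\omega$ vanishes'' and ``$D\duer A, D\duer B$ are transverse VB-Dirac structures'' is in place, the remainder is bookkeeping with the formulas of \S\ref{bicross}.
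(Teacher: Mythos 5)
Your forward direction is exactly the paper's argument: the paper, too, passes from a decomposed double Lie algebroid to the VB-Courant algebroid $D\duer A\oplus_{C^*} D\duer B$ with the induced Lagrangian splitting $\tilde\Sigma$, computes the anchor and the brackets of lifted and core sections (via the formulas of \cite[Lemma 4.8]{GrJoMaMe17} and the sign conventions of \eqref{equal_fat}), and reads off the bicrossproduct data \eqref{bicross_bracket}, \eqref{nabla_bicross_C}, \eqref{omega_matched}. Where you diverge is the converse. You propose to translate conditions (1) and (2) of Theorem \ref{double_2_rep} directly into the statement that $D\duer A$ and $D\duer B$ are isotropic, involutive (hence VB-Dirac) subbundles of $\mathbb E$, and you correctly flag this dictionary as the delicate step. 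The paper never confronts that step: its converse is purely formal. Given a bicrossproduct split Lie $2$-algebroid, Theorem \ref{double_2_rep} produces a matched pair of $2$-representations; by \cite{GrJoMaMe17} this matched pair is equivalent to a decomposed double Lie algebroid $(D;A,B;M)$; and the forward computation, combined with the fact that in a \emph{fixed} Lagrangian splitting the split Lie $2$-algebroid data determine the linear Courant algebroid structure uniquely (the second half of Theorem \ref{main}), identifies the VB-Courant algebroid one started from with $D\duer A\oplus_{C^*} D\duer B$. Since each of the three correspondences involved (Theorem \ref{main}, Theorem \ref{double_2_rep}, and the matched-pair/double-Lie-algebroid equivalence of \cite{GrJoMaMe17}) is bijective on the decomposed/split objects, the restriction statement follows with no separate isotropy or involutivity verification. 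So your route is workable but buys you extra geometric content (an explicit Dirac-theoretic characterisation of the two halves) at the cost of the very check you anticipate being hard, while the paper's route makes that obstacle dissolve; if you do pursue your version, note that isotropy of the $A$-half follows from the non-standard pairing $\langle (a,b),(\alpha,\beta)\rangle=\alpha(a)-\beta(b)$ pairing the side $A$ only against the core summand $A^*$, together with the rank count $\operatorname{rk}(A)+\operatorname{rk}(B^*)=\tfrac12\operatorname{rk}(\mathbb E\to C^*)$, and involutivity uses that $\omega_R$ in \eqref{omega_matched} has no $A$-$A$-$A$ component, so the correction term $\widetilde{R_\omega((a_1,0),(a_2,0))}$ takes values in the core summand $B^*$.
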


Recall that if the vector bundle $C$ is trivial, the matched pair of
$2$-representations is just a matched pair of the Lie algebroids $A$
and $B$. The corresponding double Lie algebroid is the decomposed
double Lie algebroid $(A\times_MB,A,B,M)$ found in \cite{Mackenzie11}.
The corresponding VB-Courant algebroid is 
\begin{equation*}
\begin{xy}
  \xymatrix{A\times_MB^*\oplus A^*\times_MB\ar[r]\ar[d]&0\ar[d]\\
    A\oplus B\ar[r]&M }
\end{xy}
\end{equation*}
with core $B^*\oplus A^*$. In that case there is a natural Lagrangian
splitting and the corresponding Lie $2$-algebroid is just the
bicrossproduct Lie algebroid structure defined on $A\oplus B$ by the
matched pair, see also the end of \S\ref{double}. This shows that the
two notions of double of a matched pair of Lie algebroids; the
bicrossproduct Lie algebroid in \cite{Mokri97} and the double Lie
algebroid in \cite{Mackenzie11} are just the $\N$-geometric and the
classical descriptions of the same object, and special cases of
Theorem \ref{bij_da_matched_lie_2}.

\subsection{Example: the two `doubles' of a Lie bialgebroid}
Recall that a Lie bialgebroid $(A,A^*)$ is a pair of Lie algebroids
$(A\to M, \rho, [\cdot\,,\cdot])$ and $(A^*\to M, \rho_\star,
[\cdot\,,\cdot]_\star)$ in duality such that $A\oplus A^*\to M$ with
the anchor $\rho+\rho_\star$, the pairing
\[\langle (a_1,\alpha_1), (a_2,
\alpha_2)\rangle =\alpha_1(a_2)+\alpha_2(a_1)
\]
 and the bracket
\[\lb (a_1,\alpha_1), (a_2,
\alpha_2)\rb=\left([a_1,a_2]+\ldr{\alpha_1}a_2-\ip{\alpha_2}\dr_{A^*}a_1,
[\alpha_1,\alpha_2]_\star+\ldr{a_1}\alpha_2-\ip{a_2}\dr_{A}\alpha_1\right)
\]
is a Courant algebroid. Lie bialgebroids were originally defined in a
different manner \cite{MaXu94}, and the definition above is at the
origin of the abstract definition of Courant algebroids
\cite{LiWeXu97}. This Courant algebroid is sometimes called the
bicrossproduct of the Lie bialgebroid, or the double of the Lie
bialgebroid.

Mackenzie came up in \cite{Mackenzie11} with an alternative notion of
double of a Lie bialgebroid. Given a Lie bialgebroid as above, the double vector bundle 
\begin{equation*}
\begin{xy}
  \xymatrix{T^*A\simeq T^*A^*\ar[r]\ar[d]&A^*\ar[d]\\
    A\ar[r]&M }
\end{xy}
\end{equation*}
is a double Lie algebroid with the following structures. The Lie
algebroid structure on $A$ defines a linear Poisson structure on
$A^*$, and so a linear Lie algebroid structure on $T^*A^*\to A^*$. In
the same manner, the Lie algebroid structure on $A^*$ defines a linear
Poisson structure on $A$, and so a linear Lie algebroid structure on
$T^*A\to A$ (see \cite{GrJoMaMe17} for more details and for the
matched pairs of $2$-representations associated to a choice of linear
splitting). The VB-Courant algebroid defined by this double Lie algebroid is 
$(T^*A)\duer A\oplus (T^*A)\duer{A^*}$
which is isomorphic to 
\begin{equation*}
\begin{xy}
  \xymatrix{TA\oplus TA^*\ar[r]\ar[d]&TM\ar[d]\\
    A\oplus A^*\ar[r]&M }
\end{xy}.
\end{equation*}
Computations reveal that the Courant algebroid structure is just the
tangent of the Courant algebroid structure on $A\oplus A^*$, and so
that the two notions of double of a Lie bialgebroid can be understood
as an algebraic and a geometric interpretation of the same object.

 \appendix

\section{Proof of Theorem \ref{main}}\label{appendix_proof_of_main} 
Let $(\mathbb E;Q,B;M)$ be a VB-Courant algebroid and choose a
Lagrangian splitting $\Sigma\colon Q\times_MB$. We prove here that the
obtained split linear Courant algebroid is equivalent to a split Lie
$2$-algebroid.  Recall the construction of the objects
$\partial_B,\Delta,\nabla,\lb\cdot\,,\cdot\rb_\sigma, R$ in
\S\ref{construction_of_objects}, and recall that $\mathcal
S\subseteq\Gamma_B(\mathbb E)$ is the subset $\{\tau^\dagger\mid
\tau\in\Gamma(Q^*)\}\cup \{\sigma_Q(q)\mid q\in\Gamma(Q)\}\subseteq
\Gamma_B(\mathbb E)$.  Recall also that the tangent double $(TB\to
B;TM\to M)$ has a VB-Lie algebroid structure, which is described in
Example \ref{tangent_double}. We begin by giving two useful lemmas.
\begin{lemma}\label{formula_for_Dl}
For $\beta\in\Gamma(B^*)$, we have 
\[\mathcal
D(\ell_\beta)=\sigma_Q(\partial_B^*\beta)+\widetilde{\nabla_\cdot^*\beta},
\]
where $\nabla_\cdot^*\beta$ is seen as
follows as a
section of $\Gamma(\operatorname{Hom}(B,Q^*))$: 
$\left(\nabla_\cdot^*\beta\right)(b)=
\langle \nabla_\cdot^*\beta, b\rangle
\in\Gamma(Q^*)$ for all $b\in\Gamma(B)$.
\end{lemma}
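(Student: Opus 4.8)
The plan is to exploit the non-degeneracy of the linear pairing on $\mathbb E\to B$. Since the sections $\{\tau^\dagger\mid\tau\in\Gamma(Q^*)\}$ together with $\{\sigma_Q(q)\mid q\in\Gamma(Q)\}$ span $\mathbb E$ over $B$, it suffices to check that both sides of the claimed identity pair identically with each of these two families. Throughout I would use the defining property of the operator $\mathcal D$ of the Courant algebroid $\mathbb E\to B$, namely $\langle\mathcal D(f),e\rangle=\Theta(e)(f)$ for $f\in C^\infty(B)$ and $e\in\Gamma_B(\mathbb E)$, which follows from $\mathcal D=\Theta^*\circ\dr$ under the self-duality of $\mathbb E\to B$ furnished by the pairing.

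First I would compute the left-hand side. Pairing $\mathcal D(\ell_\beta)$ with a core section $\tau^\dagger$ gives $\Theta(\tau^\dagger)(\ell_\beta)=(\partial_B\tau)^\uparrow(\ell_\beta)$ by \eqref{anchor_on_pullbacks}; since the vertical lift of a section acts on a linear function by $(\partial_B\tau)^\uparrow(\ell_\beta)=q_B^*\langle\partial_B\tau,\beta\rangle$, this yields $\langle\mathcal D(\ell_\beta),\tau^\dagger\rangle=q_B^*\langle\partial_B\tau,\beta\rangle$. Pairing with a linear section $\sigma_Q(q)$ gives $\Theta(\sigma_Q(q))(\ell_\beta)=\widehat{\nabla_q}(\ell_\beta)$ by \eqref{VB_def1}, and \eqref{ableitungen} turns the action of the linear vector field $\widehat{\nabla_q}$ on $\ell_\beta$ into $\ell_{\nabla_q^*\beta}$, so $\langle\mathcal D(\ell_\beta),\sigma_Q(q)\rangle=\ell_{\nabla_q^*\beta}$.

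Next I would pair the right-hand side with the same two families. Against $\tau^\dagger$, the term $\widetilde{\nabla_\cdot^*\beta}$ drops out, since a core-linear section projects to the zero section of $Q$ and hence pairs trivially with core sections by the second metric axiom, while $\langle\sigma_Q(\partial_B^*\beta),\tau^\dagger\rangle=q_B^*\langle\partial_B^*\beta,\tau\rangle=q_B^*\langle\partial_B\tau,\beta\rangle$ by that same axiom together with the definition of the dual map; this matches the first left-hand computation. Against $\sigma_Q(q)$, the term $\sigma_Q(\partial_B^*\beta)$ drops out by the Lagrangian condition $\langle\sigma_Q(\cdot),\sigma_Q(\cdot)\rangle=0$, and writing $\nabla_\cdot^*\beta=\sum_i\beta_i\otimes\tau_i$ so that $\widetilde{\nabla_\cdot^*\beta}=\sum_i\ell_{\beta_i}\tau_i^\dagger$ (as in the proof of Lemma \ref{formulas}), one computes $\langle\widetilde{\nabla_\cdot^*\beta},\sigma_Q(q)\rangle=\ell_{\langle\nabla_\cdot^*\beta,q\rangle}=\ell_{\nabla_q^*\beta}$, matching the second left-hand computation.

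With both pairings agreeing on a spanning set of sections of $\mathbb E\to B$, non-degeneracy forces the two sections to coincide, which is the assertion. I do not expect a genuine obstacle here: every ingredient is already recorded in \eqref{ableitungen}, \eqref{anchor_on_pullbacks}, \eqref{VB_def1}, the metric axioms and Lemma \ref{formulas}. The only point requiring care is bookkeeping, namely reading $\nabla_\cdot^*\beta$ as an element of $\operatorname{Hom}(B,Q^*)$ via the stated convention and keeping track of the identification $\langle\partial_B^*\beta,\tau\rangle=\langle\partial_B\tau,\beta\rangle$.
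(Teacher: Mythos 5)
Your proposal is correct and follows essentially the paper's own proof: the two computations $\langle\mathcal D(\ell_\beta),\tau^\dagger\rangle=\Theta(\tau^\dagger)(\ell_\beta)=q_B^*\langle\partial_B\tau,\beta\rangle$ and $\langle\mathcal D(\ell_\beta),\sigma_Q(q)\rangle=\Theta(\sigma_Q(q))(\ell_\beta)=\ell_{\nabla_q^*\beta}$ are exactly the ones the paper uses. The only cosmetic difference is the packaging: the paper first observes that $\mathcal D(\ell_\beta)=\Theta^*\dr\ell_\beta$ is a \emph{linear} section (since $\dr\ell_\beta$ is linear and the anchor is linear), reads off its base section $\partial_B^*\beta$ from the first pairing, and then identifies the core-linear remainder $\widetilde{\phi}$ from the second, whereas you test both sides of the identity against the spanning family $\{\tau^\dagger\}\cup\{\sigma_Q(q)\}$ and conclude by nondegeneracy of the linear metric --- the same verification in a slightly different order.
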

\begin{proof}
  For $\beta\in\Gamma(B^*)$, the section $\dr\ell_\beta$ is a linear
  section of $T^*B\to B$. Since the anchor $\Theta$ is linear, the
  section $\mathcal D\ell_\beta=\Theta^*\dr\ell_\beta$ is linear. Since for
  any $\tau\in\Gamma(Q^*)$,
\[\langle \mathcal D(\ell_\beta),
\tau^\dagger\rangle =
\Theta(\tau^\dagger)(\ell_\beta)=q_B^*\langle \partial_B\tau, \beta\rangle,
\]
we find that $\mathcal
D(\ell_\beta)-\sigma_Q(\partial_B^*\beta)\in\Gamma(\ker\pi_Q)$.
Hence, $\mathcal D(\ell_\beta)-\sigma_Q(\partial_B^*\beta)$ is a
core-linear section of $\mathbb \E\to B$ and there exists a section
$\phi$ of $\operatorname{Hom}(B,Q^*)$ such that $\mathcal
D(\ell_\beta)-\sigma_Q(\partial_B^*\beta)=\widetilde{\phi}$.  We
have \[\ell_{\langle\phi, q\rangle}= \langle \widetilde{\phi},
\sigma_Q(q)\rangle=\langle \mathcal
D(\ell_\beta)-\sigma_Q(\partial_B^*\beta), \sigma_Q(q)\rangle
=\Theta(\sigma_Q(q))(\ell_\beta)=\ell_{\nabla_q^*\beta}
\]
and so $\phi(b)=\langle \nabla_\cdot^*\beta, b\rangle\in\Gamma(Q^*)$ for all $b\in\Gamma(B)$.
\end{proof}

For each
$q\in\Gamma(Q)$, $\nabla_q$ and $\Delta_q$ define as follows a derivation
$\lozenge_q$ of $\Gamma(\operatorname{Hom}(B,Q^*))$: for $\phi\in
\Gamma(\operatorname{Hom}(B,Q^*))$ and $b\in\Gamma(B)$
\begin{equation*}
(\lozenge_q\phi)(b)=\Delta_q(\phi(b))-\phi(\nabla_qb).
\end{equation*}

\begin{lemma}
  For $q\in\Gamma(Q)$ and $\phi\in \Gamma(\operatorname{Hom}(B,Q^*))$,
  we have
$\left\lb \sigma_Q(q), \widetilde{\phi}\right\rb= \widetilde{\lozenge_q\phi}$.
\end{lemma}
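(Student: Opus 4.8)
The plan is to reduce everything to the generators used in the proof of Lemma \ref{formulas}. First I would write $\phi=\sum_{i=1}^n\beta_i\otimes\tau_i$ with $\beta_i\in\Gamma(B^*)$ and $\tau_i\in\Gamma(Q^*)$, so that the core-linear section is expressed as $\widetilde{\phi}=\sum_{i=1}^n\ell_{\beta_i}\tau_i^\dagger$, exactly as in the proof of Lemma \ref{formulas}. Since the anchor of $\sigma_Q(q)$ is the linear vector field $\Theta(\sigma_Q(q))=\widehat{\nabla_q}\in\mx^l(B)$, the Leibniz identity coming from (CA2), namely $\lb e_1,fe_2\rb=f\lb e_1,e_2\rb+(\rho(e_1)f)e_2$, applies with $f=\ell_{\beta_i}$ and $\rho(\sigma_Q(q))=\Theta(\sigma_Q(q))$.

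Then I would compute term by term. Using $\lb\sigma_Q(q),\tau_i^\dagger\rb=(\Delta_q\tau_i)^\dagger$ from the construction of $\Delta$ in \S\ref{construction_of_objects}, together with \eqref{ableitungen} in the form $\widehat{\nabla_q}(\ell_{\beta_i})=\ell_{\nabla_q^*\beta_i}$, each summand yields $\lb\sigma_Q(q),\ell_{\beta_i}\tau_i^\dagger\rb=\ell_{\beta_i}(\Delta_q\tau_i)^\dagger+\ell_{\nabla_q^*\beta_i}\tau_i^\dagger$. Summing over $i$, the result is the core-linear section $\widetilde{\psi}$ associated with $\psi=\sum_i\beta_i\otimes\Delta_q\tau_i+\sum_i(\nabla_q^*\beta_i)\otimes\tau_i\in\Gamma(\operatorname{Hom}(B,Q^*))$.

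It then remains to identify $\psi$ with $\lozenge_q\phi$. Evaluating on $b\in\Gamma(B)$, using that $\Delta_q$ is a derivation over $\rho_Q(q)$ and the defining relation of the dual connection, $\rho_Q(q)\langle\beta_i,b\rangle-\langle\beta_i,\nabla_qb\rangle=\langle\nabla_q^*\beta_i,b\rangle$, I would obtain $(\lozenge_q\phi)(b)=\Delta_q(\phi(b))-\phi(\nabla_qb)=\sum_i\langle\beta_i,b\rangle\Delta_q\tau_i+\sum_i\langle\nabla_q^*\beta_i,b\rangle\tau_i=\psi(b)$, so that $\lb\sigma_Q(q),\widetilde{\phi}\rb=\widetilde{\lozenge_q\phi}$. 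The computation is entirely routine, and there is no genuine obstacle beyond the bookkeeping: the only points requiring a little care are that the decomposition $\widetilde{\phi}=\sum_i\ell_{\beta_i}\tau_i^\dagger$ is the standard description of a core-linear section, and that the answer is independent of the chosen decomposition of $\phi$ -- which is automatic, since the final expression $\widetilde{\lozenge_q\phi}$ is manifestly tensorial in $\phi$.
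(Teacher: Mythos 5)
Your proof is correct and follows exactly the route the paper intends: the paper's own proof consists of the single remark that it is ``an easy computation as in the proof of Lemma \ref{formulas}'', i.e.\ writing $\widetilde{\phi}=\sum_i\ell_{\beta_i}\tau_i^\dagger$ and expanding via the Leibniz rule from (CA2), the relation $\lb\sigma_Q(q),\tau^\dagger\rb=(\Delta_q\tau)^\dagger$ and $\widehat{\nabla_q}(\ell_{\beta_i})=\ell_{\nabla_q^*\beta_i}$. You have simply carried out in full the computation the paper leaves to the reader, including the correct identification of the result with $\lozenge_q\phi$ via the dual connection.
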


\begin{proof}
  The proof is an easy computation as in the proof of Lemma \ref{formulas}.
\end{proof}

\medskip

Now we can express all the conditions of Lemma \ref{useful_lemma} in
terms of the objects
$\partial_B,\Delta,\nabla,\lb\cdot\,,\cdot\rb_\sigma, R$ found in
\S\ref{construction_of_objects}.

\begin{proposition}\label{anchor}
The anchor satisfies $\Theta\circ\Theta^*=0$ if and only if 
 $\rho_Q\circ\partial_B^*=0$ and 
  $\nabla_{\partial_B^*\beta_1}^*\beta_2+\nabla_{\partial_B^*\beta_2}^*\beta_1=0$
  for all $\beta_1,\beta_2\in\Gamma(B^*)$.
\end{proposition}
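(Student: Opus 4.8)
The plan is to test the $C^\infty(B)$-linear bundle map $\Theta\circ\Theta^*\colon T^*B\to TB$ against a convenient generating set. Since $\Theta$ and $\Theta^*$ are morphisms over $\Id_B$, the composite is $C^\infty(B)$-linear, and by definition $\Theta^*(\dr g)=\mathcal D g$ for every $g\in C^\infty(B)$. The exact forms $\dr(q_B^*f)$ and $\dr\ell_\beta$, for $f\in C^\infty(M)$ and $\beta\in\Gamma(B^*)$, locally span $\Omega^1(B)$ over $C^\infty(B)$, being the differentials of base and fibre-linear coordinates. Hence $\Theta\circ\Theta^*=0$ if and only if $\Theta(\mathcal D(q_B^*f))=0$ and $\Theta(\mathcal D(\ell_\beta))=0$ for all such $f$ and $\beta$, and the whole task reduces to matching these two families of equations with the two stated conditions.

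First I would treat the pullback functions. Using $\mathcal D(q_B^*f)=(\rho_Q^*\dr f)^\dagger$ together with \eqref{anchor_on_pullbacks}, I obtain $\Theta(\mathcal D(q_B^*f))=(\partial_B\rho_Q^*\dr f)^\uparrow$. This vertical lift vanishes for all $f$ precisely when $\partial_B\circ\rho_Q^*=0$, which by \eqref{rho_delta} is equivalent to $\rho_Q\circ\partial_B^*=0$. This accounts for the first condition.

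Next the linear functions. By Lemma \ref{formula_for_Dl}, $\mathcal D(\ell_\beta)=\sigma_Q(\partial_B^*\beta)+\widetilde{\nabla_\cdot^*\beta}$ is a linear section of $\mathbb E\to B$, so $\Theta(\mathcal D(\ell_\beta))$ is a linear vector field on $B$, determined by its values on $q_B^*f$ and on $\ell_{\beta'}$. Evaluating on $q_B^*f$ returns, via $\Theta(\sigma_Q(q))=\widehat{\nabla_q}$ and \eqref{ableitungen}, the function $q_B^*(\rho_Q(\partial_B^*\beta)f)$, recovering again $\rho_Q\circ\partial_B^*=0$. For the evaluation on $\ell_{\beta'}$ I would use that $\mathcal D=\Theta^*\dr$ and the symmetry of the pairing yield $\Theta(\mathcal D\ell_\beta)(\ell_{\beta'})=\langle\mathcal D\ell_\beta,\mathcal D\ell_{\beta'}\rangle$; expanding both factors with Lemma \ref{formula_for_Dl} and using that $\Sigma$ is Lagrangian (so $\langle\sigma_Q(\cdot),\sigma_Q(\cdot)\rangle=0$ and $\langle\tau_1^\dagger,\tau_2^\dagger\rangle=0$) kills the $\sigma_Q$--$\sigma_Q$ and core--core terms, leaving $\langle\sigma_Q(\partial_B^*\beta),\widetilde{\nabla_\cdot^*\beta'}\rangle+\langle\widetilde{\nabla_\cdot^*\beta},\sigma_Q(\partial_B^*\beta')\rangle$. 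Computing each mixed pairing with property (2) of a linear metric gives $\ell_{\nabla^*_{\partial_B^*\beta}\beta'+\nabla^*_{\partial_B^*\beta'}\beta}$, which vanishes for all $\beta,\beta'$ exactly when $\nabla_{\partial_B^*\beta_1}^*\beta_2+\nabla_{\partial_B^*\beta_2}^*\beta_1=0$. Combining the three evaluations gives the claimed equivalence.

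I expect the only delicate point to be the bookkeeping in the mixed pairing $\langle\sigma_Q(\partial_B^*\beta),\widetilde{\nabla_\cdot^*\beta'}\rangle$: one must unwind the description of $\nabla_\cdot^*\beta'$ as a section of $\Hom(B,Q^*)$ and transpose through $\partial_B$ to recognise the result as $\ell_{\nabla^*_{\partial_B^*\beta}\beta'}$. The pairing route has the advantage of making the $\beta\leftrightarrow\beta'$ symmetry manifest, so that the outcome is automatically symmetric in $\beta,\beta'$, matching the symmetric form of condition (i) in Definition \ref{split_Lie2}. (Alternatively one can compute $\Theta(\widetilde{\nabla_\cdot^*\beta})$ directly from $\Theta(\tau^\dagger)=(\partial_B\tau)^\uparrow$ and $C^\infty(B)$-linearity, arriving at the same expression.)
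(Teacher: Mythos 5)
Your proposal is correct, and its skeleton coincides with the paper's proof: both reduce $\Theta\circ\Theta^*=0$ to the vanishing of $(\Theta\circ\Theta^*)\dr F$ for pullback and fibrewise-linear functions $F$, treat the pullbacks identically via $\mathcal D(q_B^*f)=(\rho_Q^*\dr f)^\dagger$ and \eqref{anchor_on_pullbacks}, and invoke Lemma \ref{formula_for_Dl} for $\mathcal D\ell_\beta$. Where you genuinely depart from the paper is in the evaluation on $\ell_{\beta'}$: the paper computes the anchor of the linear section directly, obtaining the explicit formula $\Theta(\mathcal D\ell_\beta)=\widehat{\nabla_{\partial_B^*\beta}}+\widetilde{\partial_B\circ\langle\nabla_\cdot^*\beta,\cdot\rangle}$, and then applies this linear vector field to $\ell_{\beta'}$ and $q_B^*f$ -- the route you relegate to your final parenthesis. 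Your main route instead uses the identity $\Theta(\mathcal D\ell_\beta)(\ell_{\beta'})=\langle\mathcal D\ell_\beta,\mathcal D\ell_{\beta'}\rangle$ together with the Lagrangian property of $\Sigma$ (and axiom (1) of the linear metric, which -- small bookkeeping point -- is what kills the core--core term, not the Lagrangian condition) to reduce everything to the two mixed pairings, each computed from axiom (2). This buys you manifest $\beta\leftrightarrow\beta'$ symmetry, so the symmetrised form of the second condition appears automatically rather than as the sum of two differently computed contributions; what the paper's direct computation buys is the explicit expression for $\Theta(\mathcal D\ell_\beta)$ itself, which records how the core-linear part of $\mathcal D\ell_\beta$ anchors to a vertical core-linear vector field. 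One step in your argument is left implicit and deserves a line: when you evaluate on $q_B^*f$ you need $\Theta(\widetilde{\nabla_\cdot^*\beta})(q_B^*f)=0$, which follows by writing the core-linear section as $\sum_i\ell_{\beta_i}\tau_i^\dagger$ and using that $\Theta(\tau_i^\dagger)=(\partial_B\tau_i)^\uparrow$ is vertical; alternatively your pairing identity handles this case too, since $\langle\mathcal D(q_B^*f),\mathcal D\ell_\beta\rangle=q_B^*(\rho_Q(\partial_B^*\beta)f)$ by axiom (2). With that line added, the proof is complete.
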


\begin{proof}
  The composition $\Theta\circ \Theta^*$ vanishes if and only if
  $(\Theta\circ \Theta^*)\dr F=0$ for all linear and pullback
  functions $F\in C^\infty(B)$. For $f\in
  C^\infty(M)$,
  $\Theta(\Theta^*\dr(q_B^*f))=((\partial_B\circ\rho_Q^*)\dr
  f)^\uparrow$. For $\beta\in\Gamma(B^*)$, we find using Lemma \ref{formula_for_Dl}
  $\Theta(\Theta^*\dr\ell_\beta)=\Theta(\mathcal
  D\ell_\beta)=\Theta(\sigma_Q(\partial_B^*\beta)+\widetilde{\nabla_\cdot^*\beta})
  =\widehat{\nabla_{\partial_B^*\beta}}+\widetilde{\partial_B\circ
    \langle\nabla^*_\cdot\beta, \cdot\rangle}$.  Here,
  $\partial_B\circ \langle\nabla^*_\cdot\beta, \cdot\rangle$ is as
  follows a morphism $B\to B$;
  $b\mapsto \partial_B(\langle\nabla^*_\cdot\beta, b\rangle)$.  On a
  linear function $\ell_{\beta'}$, $\beta'\in\Gamma(B^*)$,
  $\Theta(\Theta^*\dr\ell_\beta)(\ell_{\beta'})
  =\ell_{\nabla_{\partial_B^*\beta}^*\beta'}+\ell_{\nabla_{\partial_B^*\beta'}^*\beta}$. On
  a pullback $q_B^*f$, $f\in C^\infty(M)$, this is
  $q_B^*(\ldr{(\rho_Q\circ\partial_B^*)(\beta)}f)$. 
\end{proof}

\begin{proposition}\label{comp}
The compatibility of $\Theta$ with the Courant algebroid
bracket $\lb\cdot\,,\cdot\rb$ 
is equivalent to
\begin{enumerate}
 \item $\partial_B\circ R(q_1,q_2)=R_\nabla(q_1,q_2)$,
 \item $\rho_Q\circ
   \lb\cdot\,,\cdot\rb_\sigma=[\cdot\,,\cdot]\circ(\rho_Q, \rho_Q)$,
   or $\Delta_q(\rho_Q^*\dr f)=\rho_Q^*\dr(\rho_Q(q)(f))$ for all
   $q\in\Gamma(Q)$ and $f\in C^\infty(M)$, and
\item $\partial_B\circ \Delta= \nabla\circ \partial_B$.
\end{enumerate}
\end{proposition}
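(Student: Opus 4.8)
The plan is to interpret the compatibility of $\Theta$ with the bracket as condition $(4)$ of Lemma \ref{useful_lemma}, i.e.~the identity $\Theta\lb s_1,s_2\rb=[\Theta(s_1),\Theta(s_2)]$ tested on the generating family $\mathcal S=\{\tau^\dagger\mid\tau\in\Gamma(Q^*)\}\cup\{\sigma_Q(q)\mid q\in\Gamma(Q)\}$, and to evaluate it on the three types of pairs of generators. First I would collect the values of the anchor on generators, $\Theta(\sigma_Q(q))=\widehat{\nabla_q}$ and $\Theta(\tau^\dagger)=(\partial_B\tau)^\uparrow$ from \eqref{VB_def1} and \eqref{anchor_on_pullbacks}, together with its value on a core-linear section: writing $\widetilde\phi=\sum_i\ell_{\beta_i}\tau_i^\dagger$ as in the proof of Lemma \ref{formulas} and using $C^\infty(B)$-linearity of $\Theta$ gives $\Theta(\widetilde\phi)=\sum_i\ell_{\beta_i}(\partial_B\tau_i)^\uparrow$, which is the linear vector field on $B$ associated, up to the sign dictated by the convention \eqref{ableitungen}, to the bundle endomorphism $\partial_B\circ\phi\in\Gamma(\operatorname{End}(B))$. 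On the tangent side I would use that the derivation--linear vector field correspondence \eqref{ableitungen} is an isomorphism of Lie algebras, so $[\widehat{D_1},\widehat{D_2}]=\widehat{[D_1,D_2]}$ for derivations of $\Gamma(B)$, while $[\widehat{D},b^\uparrow]=(Db)^\uparrow$ and $[b_1^\uparrow,b_2^\uparrow]=0$ are instances of \eqref{Lie_bracket_VF}.

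For the pair $(\sigma_Q(q_1),\sigma_Q(q_2))$ I would expand the left-hand side with the defining relation of $R$ from \S\ref{construction_of_objects}, $\lb\sigma_Q(q_1),\sigma_Q(q_2)\rb=\sigma_Q\lb q_1,q_2\rb_\sigma-\widetilde{R(q_1,q_2)}$, so that (once the sign of $\Theta(\widetilde\phi)$ is tracked) $\Theta\lb\sigma_Q(q_1),\sigma_Q(q_2)\rb$ is the linear vector field of the derivation $\nabla_{\lb q_1,q_2\rb_\sigma}+\partial_B\circ R(q_1,q_2)$, while the right-hand side $[\widehat{\nabla_{q_1}},\widehat{\nabla_{q_2}}]$ is the linear vector field of $\nabla_{q_1}\nabla_{q_2}-\nabla_{q_2}\nabla_{q_1}$. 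Equating these two derivations of $\Gamma(B)$ is the crux of the argument: comparing symbols forces $\rho_Q\lb q_1,q_2\rb_\sigma=[\rho_Q(q_1),\rho_Q(q_2)]$, which is condition $(2)$, and once this holds the remaining symbol-free parts reduce to $\partial_B\circ R(q_1,q_2)=\nabla_{q_1}\nabla_{q_2}-\nabla_{q_2}\nabla_{q_1}-\nabla_{\lb q_1,q_2\rb_\sigma}=R_\nabla(q_1,q_2)$, which is condition $(1)$. The equivalence of the two stated forms of condition $(2)$ I would then read off from the duality between the dull bracket $\lb\cdot\,,\cdot\rb_\sigma$ and the Dorfman connection $\Delta$.

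For the pair $(\sigma_Q(q),\tau^\dagger)$ the left-hand side is $\Theta((\Delta_q\tau)^\dagger)=(\partial_B\Delta_q\tau)^\uparrow$ and the right-hand side is $[\widehat{\nabla_q},(\partial_B\tau)^\uparrow]=(\nabla_q\partial_B\tau)^\uparrow$, so comparing vertical lifts gives exactly $\partial_B\circ\Delta=\nabla\circ\partial_B$, condition $(3)$. The reversed pair $(\tau^\dagger,\sigma_Q(q))$ yields the same identity after invoking the already established (CA3), and the pair $(\tau_1^\dagger,\tau_2^\dagger)$ gives $0=0$ since $\lb\tau_1^\dagger,\tau_2^\dagger\rb=0$ and the bracket of two vertical lifts vanishes. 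Assembling the three cases yields the claimed equivalence.

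I expect the main difficulty to be bookkeeping rather than structural. The genuinely delicate point is the first case, where one must split the resulting identity of derivations of $\Gamma(B)$ into its symbol (anchor) part and its tensorial part so that conditions $(2)$ and $(1)$ are extracted separately and with matching signs; this in turn forces one to track correctly the sign in $\Theta(\widetilde\phi)$ dictated by the dual-derivation convention \eqref{ableitungen}. Finally, I would record at the outset that, within the framework of Lemma \ref{useful_lemma}, it suffices to verify the compatibility on the generating family $\mathcal S$, which is precisely what the three cases accomplish.
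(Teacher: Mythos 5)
Your proposal is correct and follows essentially the same route as the paper: the paper likewise tests the anchor--bracket compatibility on the generating pairs from $\mathcal S$, obtaining condition (2) from the symbol part (evaluation on pullbacks $q_B^*f$), condition (1) from the tensorial part (evaluation on linear functions $\ell_\beta$, using $R_{\nabla^*}=-(R_\nabla)^*$, which is your derivation-splitting in function-evaluation form), and condition (3) from the pair $\bigl(\sigma_Q(q),\tau^\dagger\bigr)$. Your explicit treatment of the sign in $\Theta(\widetilde{\phi})$ and of the trivial pairs $(\tau_1^\dagger,\tau_2^\dagger)$ and $(\tau^\dagger,\sigma_Q(q))$ is sound bookkeeping that the paper leaves implicit.
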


\begin{proof}
  We have $\Theta \left\lb \sigma_Q(q_1),
    \sigma_Q(q_2)\right\rb=\left[\Theta(\sigma_Q(q_1)),
    \Theta(\sigma_Q(q_2))\right] =\left[\widehat{\nabla_{q_1}},
    \widehat{\nabla_{q_2}}\right]$ and $$\Theta\left(\sigma_Q(\lb
    q_1,
    q_2\rb_\sigma)-\widetilde{R(q_1,q_2)}\right)=\widehat{\nabla_{\lb
      q_1, q_2\rb_\sigma}}-\widetilde{\partial_B\circ R(q_1,q_2)}.$$
  Applying both derivations to a pullback function $q_B^*f$ for $f\in
  C^\infty(M)$ yields
\[\left[\widehat{\nabla_{q_1}}, \widehat{\nabla_{q_2}}\right] (q_B^*f)=q_B^*([\rho_Q(q_1), \rho_Q(q_2)]f).\]
and
\[ \left(\widehat{\nabla_{\lb q_1,
      q_2\rb_\sigma}}-\widetilde{\partial_B\circ
    R(q_1,q_2)}\right)(q_B^*f)=q_B^*(\rho_Q\lb q_1,
q_2\rb_\sigma(f))\] Applying both vector fields to a linear function
$\ell_\beta\in C^\infty(B)$, $\beta\in\Gamma(B^*)$, we get
\[\left[\widehat{\nabla_{q_1}}, \widehat{\nabla_{q_2}}\right]
(\ell_\beta)=\ell_{\nabla_{q_1}^*\nabla_{q_2}^*\beta-\nabla_{q_2}^*\nabla_{q_1}^*\beta}
\]
and 
\[ \left(\widehat{\nabla_{\lb q_1,
      q_2\rb_\sigma}}-\widetilde{\partial_B\circ
    R(q_1,q_2)}\right)(\ell_\beta)=\ell_{\nabla^*_{\lb q_1,
    q_2\rb_\sigma}\beta-R(q_1,q_2)^*\partial_B^*\beta}.\] Since
$R_{\nabla^*}(q_1,q_2)=-(R_\nabla(q_1,q_2))^*$, we find that
\[\Theta \left\lb \sigma_Q(q_1), \sigma_Q( q_2)\right\rb=[\Theta(\sigma_Q(q_1)), \Theta(\sigma_Q(q_2))]\]
for all $q_1,q_2\in\Gamma(Q)$ if and only if (1) and (2) are
satisfied.

In the same manner we compute for $q\in\Gamma(Q)$ and
$\tau\in\Gamma(Q^*)$:
\[  \Theta\left(\left\lb \sigma_Q(q),
      \tau^\dagger\right\rb\right)=(\partial_B\Delta_q\tau)^\uparrow\]
and 
\[\left[ \Theta(\sigma_Q(q)),
  \Theta(\tau^\dagger)\right]=\left[\widehat{\nabla_q},
  (\partial_B\tau)^\uparrow\right]
=\left(\nabla_q(\partial_B\tau)\right)^\uparrow.
\]
Hence, $\Theta\left(\left\lb \sigma_Q(q),
    \tau^\dagger\right\rb\right)=\left[ \Theta(\sigma_Q(q)),
  \Theta(\tau^\dagger)\right]$ if and only if
$\partial_B(\Delta_q\tau)= \nabla_q(\partial_B\tau)$.
\end{proof}

\begin{proposition}\label{CA3}
  The condition (3) of Lemma \ref{useful_lemma} is equivalent to
  $R(q_1,q_2)=-R(q_2,q_1)$
  and 
 $\lb q_1, q_2\rb_\sigma+\lb q_2, q_1\rb_\sigma=0$
for $q_1,q_2\in\Gamma(Q)$.
\end{proposition}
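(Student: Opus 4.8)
The plan is to verify condition (3) of Lemma \ref{useful_lemma}, namely $\lb s_1, s_2\rb + \lb s_2, s_1\rb = \rho^*\dr\langle s_1, s_2\rangle$, directly on the generating family $\mathcal S = \{\tau^\dagger\} \cup \{\sigma_Q(q)\}$. Since the left-hand side is symmetric in $(s_1,s_2)$ and the pairing on the right is as well, it suffices to test the identity on the three homogeneous types of pairs: $(\tau_1^\dagger,\tau_2^\dagger)$, $(\sigma_Q(q),\tau^\dagger)$, and $(\sigma_Q(q_1),\sigma_Q(q_2))$. Throughout I would use that here $\rho=\Theta$, so that $\rho^*\dr = \Theta^*\dr = \mathcal D$, together with the observation $\mathcal D(q_B^*f) = (\rho_Q^*\dr f)^\dagger$ recorded for VB-Courant algebroids, and the defining formulas $\lb \sigma_Q(q), \tau^\dagger\rb = (\Delta_q\tau)^\dagger$, $\lb \tau^\dagger, \sigma_Q(q)\rb = (-\Delta_q\tau + \rho_Q^*\dr\langle\tau,q\rangle)^\dagger$ and $\lb \sigma_Q(q_1), \sigma_Q(q_2)\rb = \sigma_Q\lb q_1, q_2\rb_\sigma - \widetilde{R(q_1,q_2)}$ from \S\ref{construction_of_objects} and \eqref{VB_def1}.

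The first two cases I expect to hold identically, carrying no information about the split Lie $2$-algebroid. For $(\tau_1^\dagger,\tau_2^\dagger)$ both sides vanish, since $\lb\tau_1^\dagger,\tau_2^\dagger\rb=0$ and $\langle\tau_1^\dagger,\tau_2^\dagger\rangle = 0$. For $(\sigma_Q(q),\tau^\dagger)$ the two core brackets combine to
\[
\lb \sigma_Q(q),\tau^\dagger\rb + \lb \tau^\dagger,\sigma_Q(q)\rb
= (\Delta_q\tau)^\dagger + (-\Delta_q\tau + \rho_Q^*\dr\langle\tau,q\rangle)^\dagger
= (\rho_Q^*\dr\langle q,\tau\rangle)^\dagger,
\]
while the right-hand side is $\mathcal D\langle\sigma_Q(q),\tau^\dagger\rangle = \mathcal D(q_B^*\langle q,\tau\rangle) = (\rho_Q^*\dr\langle q,\tau\rangle)^\dagger$ by the two displayed facts about $\mathcal D$; so the identity holds automatically.

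The only case producing content is $(\sigma_Q(q_1),\sigma_Q(q_2))$, and this is where the Lagrangian hypothesis enters decisively. Substituting the third formula of \eqref{VB_def1} gives the left-hand side as $\sigma_Q(\lb q_1,q_2\rb_\sigma + \lb q_2,q_1\rb_\sigma) - \widetilde{R(q_1,q_2)+R(q_2,q_1)}$, while the right-hand side is $\mathcal D\langle\sigma_Q(q_1),\sigma_Q(q_2)\rangle = \mathcal D(0) = 0$ precisely because the splitting is Lagrangian, i.e. $\langle\sigma_Q(q_1),\sigma_Q(q_2)\rangle = 0$. I would then finish by observing that a linear section which is simultaneously of the form $\sigma_Q(X)$ and core-linear must have $X = \pi_Q(\sigma_Q(X)) = 0$ and hence vanish; therefore the $\sigma_Q$-part and the core-linear part of the left-hand side vanish independently, yielding exactly $\lb q_1,q_2\rb_\sigma + \lb q_2,q_1\rb_\sigma = 0$ and $R(q_1,q_2) + R(q_2,q_1) = 0$. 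The computation is entirely routine; the only genuinely structural points — and thus the closest thing to an obstacle — are recognising that the first two pair-types impose nothing, and that the Lagrangian condition together with the uniqueness of the decomposition into linear and core-linear components is what isolates the two asserted skew-symmetry statements.
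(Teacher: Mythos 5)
Your proof is correct and follows essentially the same route as the paper: the core--core and mixed pairs are checked to hold automatically (the paper disposes of the mixed case by noting the formula for $\lb \tau^\dagger,\sigma_Q(q)\rb$ is arranged by definition, where you carry out the short computation with $\mathcal D(q_B^*\langle q,\tau\rangle)$ explicitly), and all the content sits in the pair $(\sigma_Q(q_1),\sigma_Q(q_2))$, where the Lagrangian condition gives $\mathcal D\langle\sigma_Q(q_1),\sigma_Q(q_2)\rangle=\mathcal D(0)=0$ and the uniqueness of the decomposition of a linear section into a horizontal lift plus a core-linear part yields the two skew-symmetry statements separately. No gaps.
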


\begin{proof}
Choose $q_1,q_2$ in $\Gamma(Q)$.
Then we have 
\begin{equation*}\label{equation}
  \lb \sigma_Q(q_1), \sigma_Q(q_2)\rb+\lb
  \sigma_Q(q_2),\sigma_Q(q_1)\rb=\sigma_Q(\lb q_1,q_2\rb_\sigma+\lb q_2,
  q_1\rb_\sigma)
  -\widetilde{R(q_1,q_2)}-\widetilde{R(q_2,q_1)}.
\end{equation*}
By the choice of the splitting, we have $\mathcal D\langle
\sigma_Q(q_1), \sigma_Q(q_2)\rangle=\mathcal D(0)=0$.  Hence, (3) of
Lemma \ref{useful_lemma} is true on horizontal lifts of sections of
$Q$ if and only if $R(q_1,q_2)=-R(q_2,q_1)$ and $\lb q_1,
q_2\rb_\sigma+\lb q_2, q_1\rb_\sigma=0$ for all $q_1,q_2\in\Gamma(Q)$.
Further, we have $\lb \sigma_Q(q),
\tau^\dagger\rb=(\Delta_q\tau)^\dagger$ and $\lb \tau^\dagger,
\sigma_Q(q)\rb=(-\Delta_q\tau+\rho_Q^*\dr\langle \tau,
q\rangle)^\dagger$ by definition. On core sections (3) is trivially
satisfied since both the pairing and the bracket of two core sections
vanish.
\end{proof}

\begin{proposition}\label{CA2}
The derivation formula  (2) in Lemma  \ref{useful_lemma} 
is equivalent to 
\begin{enumerate}
\item $\Delta$ is dual to $\lb\cdot\,,\cdot\rb_\sigma$, that is $\lb\cdot\,,\cdot\rb_\sigma=\lb\cdot\,,\cdot\rb_\Delta$,
\item $\lb q_1, q_2\rb_\sigma+\lb q_2, q_1\rb_\sigma=0$
  for all $q_1, q_2\in\Gamma(Q)$ and 
\item $R(q_1,q_2)^*q_3=-R(q_1,q_3)^*q_2$
for all $q_1,q_2,q_3\in\Gamma(Q)$.
\end{enumerate}
\end{proposition}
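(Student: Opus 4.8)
The plan is to verify condition~(2) of Lemma~\ref{useful_lemma} directly on the generating family $\mathcal S=\{\tau^\dagger\mid\tau\in\Gamma(Q^*)\}\cup\{\sigma_Q(q)\mid q\in\Gamma(Q)\}$, since by that lemma the derivation identity $\rho(s_1)\langle s_2,s_3\rangle=\langle\lb s_1,s_2\rb,s_3\rangle+\langle s_2,\lb s_1,s_3\rb\rangle$ only needs to hold on $\mathcal S$. As this identity is unchanged under interchanging $s_2$ and $s_3$, I would classify triples by the type of $s_1$ and by the unordered pair $\{s_2,s_3\}$, giving six cases according to how many arguments are of linear type $\sigma_Q(q)$ and how many are of core type $\tau^\dagger$. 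The only tools needed are the Lagrangian pairing relations $\langle\sigma_Q(q_1),\sigma_Q(q_2)\rangle=0$, $\langle\sigma_Q(q),\tau^\dagger\rangle=q_B^*\langle q,\tau\rangle$ and $\langle\tau_1^\dagger,\tau_2^\dagger\rangle=0$, the anchor relations $\Theta(\sigma_Q(q))=\widehat{\nabla_q}$, $\Theta(\tau^\dagger)=(\partial_B\tau)^\uparrow$ acting on linear and pullback functions, the bracket relations of \S\ref{construction_of_objects}, and the two auxiliary identities $\langle\widetilde\phi,\sigma_Q(q)\rangle=\ell_{\phi^*q}$ and $\langle\widetilde\phi,\tau^\dagger\rangle=0$ obtained by writing $\widetilde\phi=\sum_i\ell_{\beta_i}\tau_i^\dagger$.

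In the three cases where at least two of $s_1,s_2,s_3$ are core sections both sides vanish identically, since every pairing appearing is between core sections and the anchor, when it acts, is either $\widehat{\nabla_q}$ on a (fibrewise constant) pullback or the vertical field $(\partial_B\tau)^\uparrow$ on a pullback. The informative cases are the remaining three. When $s_1,s_2,s_3$ are all linear the left side vanishes by isotropy and, using $\langle\widetilde{R(q_1,q_2)},\sigma_Q(q_3)\rangle=\ell_{R(q_1,q_2)^*q_3}$, the right side becomes $-\ell_{R(q_1,q_2)^*q_3+R(q_1,q_3)^*q_2}$, which gives exactly condition~(3). When $s_1=\sigma_Q(q_1)$, $s_2=\sigma_Q(q_2)$ and $s_3=\tau^\dagger$, the left side is $q_B^*\bigl(\rho_Q(q_1)\langle q_2,\tau\rangle\bigr)$ and the right side is $q_B^*\bigl(\langle\lb q_1,q_2\rb_\sigma,\tau\rangle+\langle q_2,\Delta_{q_1}\tau\rangle\bigr)$; substituting the defining relation $\langle q_2,\Delta_{q_1}\tau\rangle=\rho_Q(q_1)\langle q_2,\tau\rangle-\langle\lb q_1,q_2\rb_\Delta,\tau\rangle$ of the dual dull bracket collapses this to $\lb q_1,q_2\rb_\sigma=\lb q_1,q_2\rb_\Delta$, i.e.\ condition~(1).

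The case $s_1=\tau^\dagger$ with $s_2,s_3$ both linear is where I expect the main bookkeeping. The left side again vanishes by isotropy, and the two right-hand terms are $q_B^*\langle q_2,-\Delta_{q_1}\tau+\rho_Q^*\dr\langle\tau,q_1\rangle\rangle$ and its counterpart with $q_1,q_2$ exchanged, using $\lb\tau^\dagger,\sigma_Q(q)\rb=(-\Delta_q\tau+\rho_Q^*\dr\langle\tau,q\rangle)^\dagger$. Expanding each via the duality $\langle q',\Delta_q\tau\rangle=\rho_Q(q)\langle q',\tau\rangle-\langle\lb q,q'\rb_\Delta,\tau\rangle$ and $\langle q',\rho_Q^*\dr f\rangle=\rho_Q(q')(f)$, the four $\rho_Q$-derivative terms cancel in pairs, leaving $\langle\lb q_1,q_2\rb_\Delta+\lb q_2,q_1\rb_\Delta,\tau\rangle$; its vanishing for all $\tau$ is the skew-symmetry of $\lb\cdot\,,\cdot\rb_\Delta$, which together with condition~(1) is the skew-symmetry of $\lb\cdot\,,\cdot\rb_\sigma$, i.e.\ condition~(2). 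Collecting the three informative cases establishes both implications, the only delicate point being this cancellation of the anchor-derivative terms.
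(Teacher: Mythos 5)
Your proposal is correct and follows essentially the same route as the paper's proof: both evaluate the derivation identity of Lemma \ref{useful_lemma} on the generating set $\mathcal S$, dismiss the three cases with at least two core arguments as trivially zero, and extract conditions (1), (3) and (2) respectively from the mixed case, the all-linear case, and the case with core anchor, including the pairwise cancellation of the $\rho_Q$-derivative terms that the paper carries out in its displayed computation. The only cosmetic difference is that you first obtain skew-symmetry of $\lb\cdot\,,\cdot\rb_\Delta$ and transfer it to $\lb\cdot\,,\cdot\rb_\sigma$ via (1), whereas the paper invokes the already-established duality and works directly with $\lb\cdot\,,\cdot\rb_\sigma$.
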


\begin{proof}
  We compute (CA2) for linear and core sections.  First of all, the
  equations \[\Theta(\tau_1^\dagger)\langle \tau_2^\dagger,
  \tau_3^\dagger\rangle =\langle \lb \tau_1^\dagger,
  \tau_2^\dagger\rb, \tau_3^\dagger\rangle +\langle
  \tau_2^\dagger, \lb
  \tau_1^\dagger,\tau_3^\dagger\rb\rangle,\]
\[\Theta(\tau_1^\dagger)\langle \tau_2^\dagger,
  \sigma_Q(q)\rangle =\langle \lb \tau_1^\dagger,
  \tau_2^\dagger\rb, \sigma_Q(q)\rangle +\langle 
  \tau_2^\dagger, \lb
  \tau_1^\dagger,\sigma_Q(q)\rb\rangle\]
and 
\[\Theta(\sigma_Q(q))\langle \tau_1^\dagger, \tau_2^\dagger\rangle
=\langle \lb \sigma_Q(q), \tau_1^\dagger\rb, \tau_2^\dagger\rangle
+\langle \tau_1^\dagger, \lb \sigma_Q(q), \tau_2^\dagger\rb\rangle\]
are trivially satisfied for all
$\tau_1,\tau_2,\tau_3\in\Gamma(Q^*)$ and $q\in\Gamma(Q)$.
Next we have for $q_1,q_2\in\Gamma(Q)$ and 
  $\tau\in\Gamma(Q^*)$:
\begin{align*}
  &\Theta(\sigma_Q(q_1))\langle \sigma_Q(q_2), \tau^\dagger\rangle -\langle \lb
  \sigma_Q(q_1), \sigma_Q(q_2)\rb, \tau^\dagger\rangle-\langle \sigma_Q(q_2), \lb
  \sigma_Q(q_1),
  \tau^\dagger\rb\rangle \\
  =\,\,&\widehat{ \nabla_{q_1}}(q_B^*\langle q_2,\tau\rangle)
  -q_B^*\langle \lb q_1, q_2\rb_\sigma, \tau\rangle-q_B^*\langle q_2,
  \Delta_{q_1}\tau\rangle\\
  =\,\,&q_B^*\Bigl(\rho_Q(q_1)\langle q_2,\tau\rangle -\langle \lb
  q_1, q_2\rb_\sigma, \tau\rangle-\langle q_2,
  \Delta_{q_1}\tau\rangle\Bigr)
\end{align*}
Hence
$\Theta(\sigma_Q(q_1))\langle \sigma_Q(q_2), \tau^\dagger\rangle =\langle
\lb \sigma_Q(q_1), \sigma_Q(q_2)\rb, \tau^\dagger\rangle+\langle \sigma_Q(q_2), \lb
\sigma_Q(q_1), \tau^\dagger\rb\rangle$
for all $q_1,q_2\in\Gamma(Q)$ and $\tau\in\Gamma(Q^*)$ if and only
if $\Delta$ and $\lb\cdot\,,\cdot\rb_\sigma$ are dual to each other.
Using this, we compute
\begin{align*}
  &\Theta(\tau^\dagger)\langle \sigma_Q(q_1), \sigma_Q(q_2)\rangle -\langle \lb
  \tau^\dagger, \sigma_Q(q_1)\rb, \sigma_Q(q_2)\rangle-\langle \sigma_Q(q_1), \lb
  \tau^\dagger, \sigma_Q(q_2)
  \rb\rangle \\
  =\,\,&0-\langle -(\Delta_{q_1}\tau)^\dagger+(\rho_Q^*\dr\langle
  q_1, \tau\rangle)^\dagger, \sigma_Q(q_2)\rangle -\langle \sigma_Q(q_1),
  -(\Delta_{q_2}\tau)^\dagger+(\rho_Q^*\dr\langle q_2,
  \tau\rangle)^\dagger\rangle\\
  =\,\,&-q_B^*\langle \lb q_1, q_2\rb_\sigma+ \lb q_2,
  q_1\rb_\sigma,\tau\rangle.
\end{align*}

Finally we have $\Theta(\sigma_Q(q_1))\langle \sigma_Q(q_2), \sigma_Q(q_3)\rangle =0$ for all
$q_1,q_2,q_3\in\Gamma(Q)$, and \linebreak $\langle \lb \sigma_Q(q_1), \sigma_Q(q_2)\rb,
\sigma_Q(q_3)\rangle=\ell_{-R(q_1,q_2)^*q_3}$.  This shows that
\[\Theta(\sigma_Q(q_1))\langle \sigma_Q(q_2), \sigma_Q(q_3)\rangle=\langle \lb \sigma_Q(q_1),
\sigma_Q(q_2)\rb, \sigma_Q(q_3)\rangle+\langle \sigma_Q(q_2), \lb \sigma_Q(q_1), \sigma_Q(q_3)\rb\rangle
\]
if and only if $0=-R(q_1,q_2)^*q_3-R(q_1,q_3)^*q_2$.
\end{proof}

\begin{proposition}\label{Jacobi}
  Assume that $\Delta$ and $\lb\cdot\,,\cdot\rb_\sigma$ are dual to each
  other. The Jacobi identity in Leibniz form for sections in $\mathcal
  S$ is equivalent to
\begin{enumerate}
\item $R(q_1,q_2)\circ \partial_B=R_\Delta(q_1,q_2)$ and 
\item 
\begin{equation*}
\begin{split}
  &R(q_1, \lb q_2,q_3\rb_\Delta)-R(q_2, \lb q_1, q_3\rb_\Delta)-R(\lb q_1,q_2\rb_\Delta),q_3)\\
  &+\lozenge_{q_1}(R(q_2,q_3))-\lozenge_{q_2}(R(q_1,q_3))+\lozenge_{q_3}(R(q_1,q_2))=\,\nabla_\cdot^*\left(R(q_1,q_2)^*q_3\right)
\end{split}
\end{equation*}
\end{enumerate}
for all $q_1,q_2,q_3\in\Gamma(Q)$.
\end{proposition}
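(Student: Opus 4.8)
The plan is to check the Jacobi identity (CA1) of Lemma \ref{useful_lemma} directly on the generating family $\mathcal S=\{\tau^\dagger\mid\tau\in\Gamma(Q^*)\}\cup\{\sigma_Q(q)\mid q\in\Gamma(Q)\}$, organising the work according to how many of the three arguments are horizontal lifts. Write $J(e_1,e_2,e_3)=\lb e_1,\lb e_2,e_3\rb\rb-\lb\lb e_1,e_2\rb,e_3\rb-\lb e_2,\lb e_1,e_3\rb\rb$ for the Leibniz--Jacobiator. First I would dispose of the triples containing at most one lift: if two or three entries are core sections, then every bracket appearing in $J$ is either $0$ (two core sections) or again a core section, so all three summands cancel termwise and CA1 imposes no condition. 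This uses only \eqref{VB_def1} and $\lb\tau_1^\dagger,\tau_2^\dagger\rb=0$.

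Next I would treat the triples with exactly two lifts, the model case being $J(\sigma_Q(q_1),\sigma_Q(q_2),\tau^\dagger)$. Expanding with $\lb\sigma_Q(q),\tau^\dagger\rb=(\Delta_q\tau)^\dagger$, with $\lb\sigma_Q(q_1),\sigma_Q(q_2)\rb=\sigma_Q\lb q_1,q_2\rb_\Delta-\widetilde{R(q_1,q_2)}$, and with $\lb\widetilde{R(q_1,q_2)},\tau^\dagger\rb=-\big(R(q_1,q_2)(\partial_B\tau)\big)^\dagger$ from Lemma \ref{formulas}(1), the three summands collapse to $J=\big(R_\Delta(q_1,q_2)\tau-R(q_1,q_2)(\partial_B\tau)\big)^\dagger$, using that $\lb\cdot\,,\cdot\rb_\sigma=\lb\cdot\,,\cdot\rb_\Delta$. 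The other two orderings of the same triple give the same output after a slightly longer computation in which the $\rho_Q^*\dr$-corrections contained in $\lb\tau^\dagger,\sigma_Q(q)\rb$ cancel against one another by the Dorfman-connection axiom $\Delta_q\rho_Q^*\dr f=\rho_Q^*\dr(\rho_Q(q)f)$ and the duality $\langle\Delta_q\tau,q'\rangle=\rho_Q(q)\langle\tau,q'\rangle-\langle\tau,\lb q,q'\rb_\Delta\rangle$. Hence the two-lift cases are together equivalent to condition (1).

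The remaining and decisive case is $J(\sigma_Q(q_1),\sigma_Q(q_2),\sigma_Q(q_3))$, where each inner bracket of two lifts produces a core-linear term $\widetilde{R(q_i,q_j)}$ that must then be bracketed again with a lift. For a lift in the first slot I use the identity $\lb\sigma_Q(q),\widetilde\phi\rb=\widetilde{\lozenge_q\phi}$ established above. For a lift in the second slot I first record the auxiliary formula $\lb\widetilde\phi,\sigma_Q(q)\rb=-\widetilde{\lozenge_q\phi}+\mathcal D\ell_{\phi^*(q)}$, obtained from the skew-symmetry relation CA3 (Proposition \ref{CA3}, extended $C^\infty(B)$-bilinearly) together with $\langle\widetilde\phi,\sigma_Q(q)\rangle=\ell_{\phi^*(q)}$, and then expand $\mathcal D\ell_{\phi^*(q)}=\sigma_Q(\partial_B^*\phi^*(q))+\widetilde{\nabla_\cdot^*\phi^*(q)}$ via Lemma \ref{formula_for_Dl}. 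After substitution, $J$ decomposes into a horizontal ($\pi_Q$-projectable) part and a core-linear part lying in complementary summands, so the two must vanish separately.

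The horizontal part is $\sigma_Q\big(\Jac_{\lb\cdot\,,\cdot\rb_\Delta}(q_1,q_2,q_3)-\partial_B^*(R(q_1,q_2)^*q_3)\big)$, and by the general identity \eqref{curv_dual_Jac}, $\langle\tau,\Jac_{\lb\cdot\,,\cdot\rb_\Delta}(q_1,q_2,q_3)\rangle=\langle R_\Delta(q_1,q_2)\tau,q_3\rangle$, it vanishes exactly when $R_\Delta(q_1,q_2)=R(q_1,q_2)\circ\partial_B$, i.e.\ under condition (1) already obtained above. Collecting the remaining terms, the core-linear part reads $R(q_1,\lb q_2,q_3\rb_\Delta)-R(q_2,\lb q_1,q_3\rb_\Delta)-R(\lb q_1,q_2\rb_\Delta,q_3)+\lozenge_{q_1}R(q_2,q_3)-\lozenge_{q_2}R(q_1,q_3)+\lozenge_{q_3}R(q_1,q_2)-\nabla_\cdot^*(R(q_1,q_2)^*q_3)$, whose vanishing is precisely condition (2). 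This yields CA1 on $\mathcal S$ $\Leftrightarrow$ (1) and (2). I expect the main obstacle to be the three-lift case: specifically deriving and sign-tracking the formula for $\lb\widetilde\phi,\sigma_Q(q)\rb$ and then separating $J$ so that its horizontal part is exactly governed by \eqref{curv_dual_Jac} (hence by (1)) while its core-linear part reproduces (2) verbatim.
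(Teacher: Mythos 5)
Your proof is correct and takes essentially the same route as the paper's: the same case-by-case evaluation of the Leibniz--Jacobiator on $\mathcal S$, using Lemma \ref{formulas}, the identity $\lb \sigma_Q(q),\widetilde{\phi}\rb=\widetilde{\lozenge_q\phi}$ together with (CA3) and Lemma \ref{formula_for_Dl} to compute $\lb\widetilde{\phi},\sigma_Q(q)\rb$, and then \eqref{curv_dual_Jac} to identify the horizontal part of the three-lift case with condition (1) and the core-linear part with condition (2). The only difference is cosmetic: you spell out the remaining orderings of the mixed triples, which the paper leaves implicit.
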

If $R$ is skew-symmetric as in (1) of Proposition \ref{CA3}, then the second
equation is $\dr_{\nabla^*}\omega=0$ for $\omega\in\Omega^3(Q,B^*)$ defined by 
$\omega(q_1,q_2,q_3)=R(q_1,q_2)^*q_3$.

\begin{proof}
  The Jacobi identity is trivially satisfied on core sections since
  the bracket of two core sections is $0$.  Similarly, for
  $\tau_1,\tau_2\in\Gamma(Q^*)$ and $q\in\Gamma(Q)$, we find $\lb
  \sigma_Q(q),\lb \tau_1^\dagger, \tau_2^\dagger\rb\rb=0$ and $\lb \lb
  \sigma_Q(q), \tau_1^\dagger\rb, \tau_2^\dagger\rb +\lb
  \tau_1^\dagger, \lb \sigma_Q(q), \tau_2^\dagger\rb\rb=0 $.  We have
\begin{equation*}\begin{split}
    \left\lb \sigma_Q(q_1), \left\lb \sigma_Q(q_2),
        \tau^\dagger\right\rb\right\rb-\left\lb \sigma_Q(q_2), \left\lb
        \sigma_Q(q_1), \tau^\dagger\right\rb\right\rb
 &=\left\lb \sigma_Q(q_1),
      (\Delta_{q_2}\tau)^\dagger\right\rb
    -\left\lb \sigma_Q(q_2), (\Delta_{q_1}\tau)^\dagger\right\rb\\
    &=(\Delta_{q_1}\Delta_{q_2}\tau)^\dagger-
    (\Delta_{q_2}\Delta_{q_1}\tau)^\dagger,
\end{split}\end{equation*}
and 
\begin{align*}
  \left\lb \left\lb \sigma_Q(q_1), \sigma_Q(q_2)\right\rb, \tau^\dagger\right\rb
 & =\left\lb \sigma_Q(\lb q_1, q_2\rb_\Delta)-\widetilde{R(q_1,q_2)},
    \tau^\dagger\right\rb= (\Delta_{\lb q_1,
    q_2\rb_\Delta}\tau)^\dagger+(R(q_1,q_2)(\partial_B\tau))^\dagger
\end{align*}
by Lemma \ref{formulas}.
We now choose $q_1,q_2, q_3\in\Gamma(Q)$ and compute 
\begin{equation*}
\begin{split}
  &\left\lb\left\lb \sigma_Q(q_1), \sigma_Q(q_2)\right\rb, \sigma_Q(q_3)\right\rb\\
&=\left\lb
    \sigma_Q(\lb q_1, q_2\rb_\Delta)-\widetilde{R(q_1,q_2)}, \sigma_Q(q_3)
  \right\rb\\
  &=\sigma_Q(\lb \lb q_1, q_2\rb_\Delta, q_3\rb_\Delta)-\widetilde{R(\lb q_1, q_2\rb_\Delta,
  q_3)}-\mathcal D\ell_{\langle R(q_1,q_2)\cdot , q_3\rangle}+\widetilde{\lozenge_{q_3}R(q_1,q_2)}\\
  &=\sigma_Q(\lb \lb q_1, q_2\rb_\Delta, q_3\rb_\Delta)-\widetilde{R(\lb q_1, q_2\rb_\Delta,
  q_3)}\\
  &\qquad -\sigma_Q(\partial_B^*\langle R(q_1,q_2)\cdot ,
  q_3\rangle)-\widetilde{\nabla_\cdot^*\langle R(q_1,q_2)\cdot , q_3\rangle}
    +\widetilde{\lozenge_{q_3}R(q_1,q_2)}
\end{split}
\end{equation*}
and 
\begin{equation*}
\begin{split}
  \left\lb \sigma_Q(q_2),\left\lb \sigma_Q(q_1),
      \sigma_Q(q_3)\right\rb\right\rb&=\left\lb \sigma_Q(q_2),
    \sigma_Q(\lb q_1,
    q_3\rb_\Delta)-\widetilde{R(q_1,q_3)}\right\rb\\
  &=\sigma_Q(\lb q_2, \lb q_1,
  q_3\rb_\Delta\rb_\Delta)-\widetilde{R(q_2,\lb q_1,
    q_3\rb_\Delta)}-\widetilde{\lozenge_{q_2}R(q_1,q_3)}.
\end{split}
\end{equation*}
We hence find that 
\[\left\lb\left\lb \sigma_Q(q_1), \sigma_Q(q_2)\right\rb,
  \sigma_Q(q_3)\right\rb+\left\lb \sigma_Q(q_2),\left\lb \sigma_Q(q_1),
    \sigma_Q(q_3)\right\rb\right\rb=\left\lb \sigma_Q(q_1),\left\lb \sigma_Q(q_2),
    \sigma_Q(q_3)\right\rb\right\rb
\]
if and only if 
\[\left\lb\left\lb q_1, q_2\right\rb_\Delta, 
  q_3\right\rb_\Delta+\left\lb q_2,\left\lb q_1,
    q_3\right\rb_\Delta\right\rb_\Delta=\left\lb q_1,\left\lb q_2,
    q_3\right\rb_\Delta\right\rb_\Delta+\partial_B^*\langle
R(q_1,q_2)\cdot,q_3\rangle
\]
and 
\begin{equation*}
\begin{split}
  &R(\lb
  q_1,q_2\rb_\Delta,q_3)+\nabla_\cdot^*\langle
  R(q_1,q_2)\cdot,q_3\rangle-\lozenge_{q_3}R(q_1,q_2)
  +R(q_2,\lb q_1,q_3\rb_\Delta)+\lozenge_{q_2}R(q_1,q_3)\\
  &=R(q_1,\lb q_2,q_3\rb_\Delta)+\lozenge_{q_1}R(q_2,q_3).
\end{split}
\end{equation*}
We conclude using \eqref{curv_dual_Jac}.
\end{proof}

A combination of Propositions \ref{anchor}, \ref{comp}, \ref{CA3},
\ref{CA2}, \ref{Jacobi} and Lemma \ref{useful_lemma} proves Theorem \ref{main}.

\section{Double vector bundles, dualisation and linear splittings}\label{appendix_dual}
We write $(D;A,B;M)$ for a double
vector bundle, i.e.~a commutative square
\begin{equation*}
\begin{xy}
\xymatrix{
D \ar[r]^{\pi_B}\ar[d]_{\pi_A}& B\ar[d]^{q_B}\\
A\ar[r]_{q_A} & M}
\end{xy}
\end{equation*}
with all four sides vector bundles projections, such that $\pi_B$
is a vector bundle morphism over $q_A$; such that $+_B: D\times_B D
\rightarrow D$ is a vector bundle morphism over $+: A\times_M A
\rightarrow A$, and such that the scalar multiplication $\R\times
D\to D$ in the bundle $D\to B$ is a vector bundle morphism over the
scalar multiplication $\R\times A\to A$.  The corresponding statements
for the operations in the bundle $D\to A$ follow. Recall that the
condition that each addition in $D$ is a morphism with respect to the
other is exactly
$ (d_1+_Ad_2)+_B(d_3+_Ad_4)=(d_1+_Bd_3)+_A(d_2+_Bd_4)$
for apropriate $d_1,d_2,d_3,d_4\in D$.  The vector bundles $A$ and $B$
are called the side bundles. The core $C$ of $D$ is the intersection of
the kernels of $\pi_A$ and of $\pi_B$. It has a natural vector bundle
structure over $M$, the projection of which we call $q_C\colon C
\rightarrow M$. We denote by $C_m \ni c \longmapsto \overline{c} \in
\pi_A^{-1}(0^A_m) \cap \pi_B^{-1}(0^B_m)$ the inclusion $C
\hookrightarrow D$.

For a smooth section $c\colon M \rightarrow C$, the corresponding core
section $c^\dagger\colon B \rightarrow D$ is defined by
$c^\dagger(b_m) = 0^D_{\vphantom{1}_{b_m}} +_A \overline{c(m)}$,
$m \in M$, $b_m \in B_m$.  We denote the corresponding core section
$A\to D$ by $c^\dagger$ also, relying on the argument to distinguish
between them. The space of core sections of $D$ over $B$ is written
$\Gamma_B^c(D)$.  A section $\xi\in \Gamma_B(D)$ is linear if
$\xi\colon B \rightarrow D$ is a bundle morphism from $B \rightarrow
M$ to $D \rightarrow A$ over a section $a\in\Gamma(A)$.  The space of
linear sections of $D$ over $B$ is written $\Gamma^\ell_B(D)$. A
section $\psi\in \Gamma(B^*\otimes C)$ defines a linear section
$\tilde{\psi}\colon B\to D$ over the zero section $0^A\colon M\to A$
by $ \widetilde{\psi}(b_m) = 0^D_{b_m}+_A \overline{\psi(b_m)}$
for all $b_m\in B$.  We call $\widetilde{\psi}$ a core-linear section.
The space of sections
of $D\to B$ is generated as a $C^{\infty}(B)$-module by its 
linear and core sections.

 Let $A, \, B, \, C$ be vector bundles over $M$.
The decomposed double vector bundle with sides $A$ and $B$ and
    core $C$ is $D=A\times_M B \times_M C$ with the vector bundle structures
  $D=q^{!}_A(B\oplus C) \to A$ and $D=q_B^{!}(A\oplus C) \to B$.
In particular, the fibered product $A\times_M B$ is a double vector
bundle over the sides $A$ and $B$ and with trivial core.
A linear splitting\footnote{Each double vector bundle admits a linear
  splitting, see \cite{Jotz17a} for comments on this, and for
  references.} of $(D; A, B; M)$ is an injective morphism of double
vector bundles $\Sigma\colon A\times_M B\hookrightarrow D$ over the
identity on the sides $A$ and $B$.
A linear splitting $\Sigma$ of a double vector bundle $D$ is 
equivalent to a splitting $\sigma_A$ of the short exact sequence of
$C^\infty(M)$-modules
\begin{equation}\label{ses_sections}
0 \longrightarrow \Gamma(B^*\otimes C) \hookrightarrow \Gamma^\ell_B(D) 
\longrightarrow \Gamma(A) \longrightarrow 0,
\end{equation}
where the third map is the map that sends a linear section $(\xi,a)$
to its base section $a\in\Gamma(A)$.  The splitting $\sigma_A$ will be
called a horizontal lift or simply a lift. Given
$\Sigma$, the horizontal lift $\sigma_A\colon \Gamma(A)\to
\Gamma_B^\ell(D)$ is given by $\sigma_A(a)(b_m)=\Sigma(a(m), b_m)$ for
all $a\in\Gamma(A)$ and $b_m\in B$.  By the symmetry of a linear
splitting, we find that a lift $\sigma_A\colon
\Gamma(A)\to\Gamma_B^\ell(D)$ is equivalent to a lift $\sigma_B\colon
\Gamma(B)\to \Gamma_A^\ell(D)$:
$\sigma_B(b)(a(m))=\sigma_A(a)(b(m))$ for all $a\in\Gamma(A)$,
$b\in\Gamma(B)$.
Note finally that two linear splittings $\Sigma^1,\Sigma^2\colon
A\times_MB\to D$ differ by a section $\phi$ of $A^*\otimes
B^*\otimes C\simeq \operatorname{Hom}(A,B^*\otimes C)\simeq
\operatorname{Hom}(B,A^*\otimes C)$:  For each
$a\in\Gamma(A)$ the difference $\sigma_A^2(a)-_B\sigma_A^1(a)$ of
horizontal lifts is the core-linear section defined by
$\phi(a)\in\Gamma(B^*\otimes C)$. By symmetry,
$\sigma_B^2(b)-_A\sigma_B^1(b)=\widetilde{\phi(b)}$ for each
$b\in\Gamma(B)$.

The space of linear sections of $D$ is a locally free and finitely
generated $C^{\infty}(M)$-module (this follows from the existence of
local splittings \cite{delCarpio-Marek15}). Hence, there is a vector bundle $\widehat{A}$ over
$M$ such that $\Gamma^l_B(D)$ is isomorphic to $\Gamma(\widehat{A})$
as $C^{\infty}(M)$-modules. The vector bundle $\widehat{A}$ is called
the fat vector bundle defined by $\Gamma^l_B(D)$. 
The short exact sequence \eqref{ses_sections} induces a short exact
sequence of vector bundles
\begin{equation}\label{ses_vb}
0 \longrightarrow B^*\otimes C \hookrightarrow \widehat{A} \longrightarrow A \longrightarrow 0.
\end{equation}
We refer to \cite{Pradines77,Mackenzie05,GrMe10a} for more
details on double vector bundles. 

\bigskip
Double vector bundles can be dualised in two distinct ways.  We denote
by $D\duer A$ the dual of $D$ as a vector bundle over $A$ and likewise
for $D\duer B$. The dual $D\duer A$ is again a double vector
bundle\footnote{The projection $\pi_{C^*}\colon D\duer A\to C^*$ is
  defined as follows: if $\Phi\in D\duer A$ projects to
  $\pi_A(\Phi)=a_m$, then $\pi_{C^*}(\Phi)\in C^*_m$ is defined by
  $\pi_{C^*}(\Phi)(c_m)=\Phi(0^D_{a_m}+_B\overline{c_m})$ for all
  $c_m\in C_m$. If $\Phi_1$ and $\Phi_2\in D\duer A$ satisfy
  $\pi_{C^*}(\Phi_1)=\pi_{C^*}(\Phi_2)$, $\pi_A(\Phi_1)=a^1_m$ and
  $\pi_A(\Phi_2)=a^2_m$, then $\Phi_1+_{C^*}\Phi_2$ is defined by
  $(\Phi_1+_{C^*}\Phi_2)(d_1+_Bd_2)=\Phi_1(d_1)+\Phi_2(d_2)$ for all
  $d_1,d_2\in D$ with $\pi_B(d_1)=\pi_B(d_2)$ and $\pi_A(d_1)=a^1_m$,
  $\pi_A(d_2)=a^2_m$.  The core element $\overline{\beta_m}\in D\duer
  A$ defined by $\beta_m\in B^*$ is
  $\overline{\beta_m}(d)=\beta_m(\pi_B(d))$ for all $d\in D$ with
  $\pi_A(d)=0^A_m$.  By playing with the vector bundle structures on
  $D\duer A$ and the addition formula, one can show that each core element
  of $D\duer A$ is of this form. See \cite{Mackenzie11}.}, with side
bundles $A$ and $C^*$ and core $B^*$ \cite{Mackenzie99,Mackenzie11}.
$$ 
{\xymatrix{
    D\ar[r]^{\pi_B}\ar[d]_{\pi_A}&   B\ar[d]^{q_{B}}\\
    A\ar[r]_{q_A}                   &  M\\
  }} \qquad\qquad {\xymatrix{
    D\duer A \ar[r]^{\pi_{C^*}}\ar[d]_{\pi_A}&   C^*\ar[d]^{q_{C^*}}\\
    A\ar[r]_{q_{A}}                   &  M\\
  }} \qquad\qquad {\xymatrix{
    D\duer B \ar[r]^{\pi_B}\ar[d]_{\pi_{C^*}}&   B\ar[d]^{q_B}\\
    C^*\ar[r]_{q_{C^*}}                   &  M\\
  }}
$$ 
By dualising again $D\duer A$ over $C^*$, we get
\[\xymatrix{
(D\duer A)\duer{C^*} \ar[r]^{\pi_{C^*}}\ar[d]_{\pi_B}&   C^*\ar[d]^{q_{C^*}}      \\
B\ar[r]_{q_{B}}             &  M,\\
}\]
with core $A^*$. In the same manner, we get a double vector bundle
$(D\duer B)\duer{C^*}$ with sides $A$ and $C^*$ and core $B^*$.

The vector bundles $D\duer B\to C^*$ and $D\duer A\to C^*$ are, up to
a sign, naturally in duality to each other \cite{Mackenzie05}. The
pairing
\[ \nsp{\cdot\,}{\cdot} \colon (D\duer A)\times_{C^*} (D\duer B)\to
\mathbb R
\]  
is defined as follows: for $\Phi\in D\duer A$ and $\Psi\in D\duer B$
projecting to the same element $\gamma_m$ in $C^*$, choose $d\in D$
with $\pi_A(d)=\pi_A(\Phi)$ and $\pi_B(d)=\pi_B(\Psi)$.  Then $\langle
\Phi, d\rangle_A-\langle \Psi,d\rangle_B$ does not depend on the
choice of $d$ and we set $\nsp{\Phi}{\Psi}=\langle \Phi,
d\rangle_A-\langle \Psi,d\rangle_B$.  This implies in particular that
$D\duer A$ is canonically (up to a sign) isomorphic to $(D\duer B)\duer
{C^*}$ (we identify $D\duer A$ with $(D\duer B)\duer{C^*}$ using
$-\nsp{\cdot\,}{\cdot}$) and $D\duer B$ is isomorphic to $(D\duer
A)\duer{C^*}$ (we identify $D\duer B$ with $(D\duer A)\duer{C^*}$ using
$\nsp{\cdot\,}{\cdot}$).

Given a horizontal lift $\sigma_A\colon\Gamma(A)\to\Gamma_B^l(D)$,
  the ``dual'' horizontal lift
  $\sigma^\star_A\colon\Gamma(A)\to\Gamma_{C^*}^l(D\duer A)$ is defined by
\[\langle \sigma_A^\star(a)(\gamma_m),\sigma_A(a)(b_m)\rangle_A=0, \qquad
\langle
\sigma_A^\star(a)(\gamma_m),c^\dagger(a(m))\rangle_A=\langle\gamma_m,
c(m)\rangle
\]
for all $a\in\Gamma(A)$, $c\in\Gamma(C)$, $b_m\in B$ and $\gamma_m\in
C^*$. In the same manner, given a horizontal lift
$\sigma_B\colon\Gamma(B)\to\Gamma_A^l(D)$, we define the dual
horizontal lift $\sigma^\star_B\colon\Gamma(B)\to\Gamma_{C^*}^l(D\duer
B)$.  We have the following equations:
\begin{equation}\label{equal_fat}
  \nsp{\sigma_A^\star(a)}{\sigma_B^\star(b)}=0, \quad
  \nsp{\sigma_A^\star(a)}{\alpha^\dagger} = -q_{C^*}^*\langle\alpha, a\rangle,\quad
  \nsp{\beta^\dagger}{\sigma_B^\star(b)} = q_{C^*}^*\langle\beta, b\rangle,
\end{equation}
for all $a\in\Gamma(A)$, $b\in\Gamma(B)$, $\alpha\in\Gamma(A^*)$ and
$\beta\in\Gamma(B^*)$.

\def\cprime{$'$} \def\polhk#1{\setbox0=\hbox{#1}{\ooalign{\hidewidth
  \lower1.5ex\hbox{`}\hidewidth\crcr\unhbox0}}} \def\cprime{$'$}
  \def\cprime{$'$}

\end{document}